\newtheorem{theorem}{Theorem}[section]
\newtheorem{prop}{Proposition}[section]
\newtheorem{lemma}[theorem]{Lemma}
\newtheorem{defn}{Definition}[section]
\theoremstyle{remark}
\newtheorem{remark}{Remark}[section]
\newtheorem{example}{Example}[section]
\numberwithin{equation}{section}
 \DeclareMathOperator\hdim{\dim_H}
\def\N{\mathbb{N}}
\def\R{\mathbb{R}}
\begin{document}

\title[]{Spectra of the Sierpi\'{n}ski type spectral measure and their Beurling dimensions}

\author[Jinjun Li]{Jinjun Li}

 \address[Jinjun Li]{School of Mathematics and Information Science, Guangzhou University, Guangzhou, 510006, P.~R.~China}
\email{li-jinjun@163.com}

\author{Zhiyi Wu}
\address[Zhiyi Wu]{School of Mathematics and Information Science, Guangzhou University, Guangzhou, 510006, P.~R.~China}
\email{zhiyiwu@126.com}

\subjclass[2010]{Primary 28A80; 42B10  }

\keywords{~Sierpi\'{n}ski measure; ~Beurling dimension; ~Spectral measure.}


\begin{abstract}
In this paper, we study the structure of the spectra for the Sierpi\'{n}ski type spectral measure $\mu_{A,\mathcal{D}}$ on $\mathbb{R}^2$. We give a sufficient and necessary condition for the family of exponential functions
   $\{e^{-2\pi i\langle\lambda, x\rangle}: \lambda\in\Lambda\}$ to be a maximal orthogonal set in $L^2(\mu_{A,\mathcal{D}})$. Based on this result, we obtain a class of regular spectra of $\mu_{A,\mathcal{D}}$. Moreover, we discuss the Beurling dimensions of the spectra and obtain the optimal upper bound of Beurling dimensions of all spectra, which is in stark contrast with the case of self-similar spectral measure.  An intermediate property about the Beurling dimension of the spectra is obtained.
\end{abstract}


\maketitle
	\tableofcontents
	\section{Introduction and results}
 One of the canonical problems of Harmonic analysis is to determine whether a given collection of functions is complete in a given Hilbert space. Among possible classes of functions, the family of exponential functions maybe the most important due to their good properties and universal applications.

We say that a Borel probability measure $\mu$ on $\mathbb{R}^d$ is a {\it spectral measure} if there exists a countable set $\Lambda\subseteq \mathbb{R}^d$ such that the family of exponential functions
 \begin{equation}\label{eq-basis}
 E(\Lambda)=\{e^{-2\pi i\langle\lambda, x\rangle}:\lambda\in\Lambda\}
\end{equation}
 forms an orthonormal basis for $L^2(\mu),$ the set of square integrable functions. In this case, the set $\Lambda$ is called a {\it spectrum} of $\mu$. For the convenience of expression, $\Lambda\subseteq \mathbb{R}^d$ is called  an {\it orthogonal set} (a {\it maximal orthogonal set}) of $\mu$ if the system of exponential functions defined by \eqref{eq-basis} is   an orthogonal set (a maximal orthogonal set)  of exponentials in the space $L^2(\mu)$.

In 1998, Jorgensen and Pederson \cite{JP98} showed that the standard middle-fourth Cantor measure $\mu_{4,\{0,2\}}$ is a spectral measure, which marks the entrance of Fourier analysis into the realm of fractals. Since then, much work has
been devoted to studying the spectrality of self-similar measures, self-affine measures and Moran measures, see \cite{AFL19,AH14,AW21,CLW,D12,De16,DHL14,DHLai19,DLau15,LDL,DJ07,JLL10,LMW,LW02,LZWC} and references therein. Around the same time, various new phenomena different from spectral theory for the Lebesgue measure have been found.  Among these, {\L}aba and Wang \cite{LW02} firstly discovered that there exists a self-similar spectral measure $\mu$ with a spectrum $\Lambda$ such that $2\Lambda$ is also a spectrum of $\mu$. This is rather surprising because the scaled set becomes more sparse but keeps the completeness. The exotic phenomenon naturally yields  the spectral eigenvalue problem \cite{FHW18,HTW19,D16,JKS11,LWu,WW20} and further spectral structure problem \cite{DHS09,DHL13,D16,HTW19,DDL19,AHH19}.

Dutkay, Han and Sun \cite{DHS09} firstly used the tool of maximal tree mapping (they called it maximal tree label) to give a characterization of  the maximal orthogonal set of exponentials in $L^2(\mu_{4,\{0,2\}})$. Moreover, they gave some sufficient conditions for a maximal orthogonal set to be a spectrum. Later, Dai, He and Lai   \cite{DHL13}  and Dai \cite{D16} improved the construction of the maximal tree mapping, which can separate the maximal orthogonal set of more general Cantor measure $\mu_{b,\{0,1,\ldots,q-1\}}$ with $q|b$ into two types: regular and irregular sets, where $\mu_{b,\{0,1,\ldots,q-1\}}$ is generated by an iterated function system of the form $\frac{1}{b}(x+i),i=0,1,\ldots,q-1$ and equally weighted probability. Furthermore, they obtained more general criteria for maximal orthogonal set of exponentials in $L^2(\mu_{b,\{0,1,\ldots,q-1\}})$ to be spectra. Recently, the spectral structure of more general self-similar measures, some self-affine measures and Moran measures have been studied \cite{DDL19,CDL22}.

Given a singular continuous spectral measure, it is a big challenge to completely characterize its spectra. To the best of our knowledge, there is no one singular continuous spectral measure whose spectra has been completely characterized. To overcome this issue, Duktay et al. \cite{DHSW11} tried to give a global characterization of all spectra for a spectral measure. They borrowed the notion of ``Beurling dimension", which was introduced to study the Gabor pseudo-frame for affine subspaces by Czaja, Kutyniok and Speegle \cite{CKS08}.

	Let $\Lambda\subseteq \mathbb{R}^d$ be a countable set. For $r>0,$ the \textit{upper $r$-Beurling density} of $\Lambda$ is defined by
\[
D_r^+(\Lambda)=\limsup\limits_{h\to \infty}\sup_{x\in \mathbb{R}^d}\frac{\#(\Lambda \cap B(x,h))}{h^r},
\]
where  $B(x,h)$ is the open ball centered at $x$ with radius $h$.
It is not difficult to prove that there is a critical value $r$ at which $D_r^+(\Lambda)$ jumps from $\infty$ to $0$. This critical value is defined as the \textit{upper Beurling dimension} of $\Lambda$, i.e.,
\[
\dim_{Be} (\Lambda)=\inf\{r:D_r^+(\Lambda)=0\}=\sup\{r:D_r^+(\Lambda)=\infty\}.
\]

 Duktay et al. \cite {DHSW11} proved that the Beurling dimension of the spectra of self-similar spectral measure which satisfies the open set condition is not greater than the Hausdorff dimension of its support, and they may equal under some mild condition. Their results were subsequently extended to more general self-similar spectral measure and some affine measure \cite{HKTW18,TW21,ZX20}. On the other hand, there exist arbitrarily sparse spectra for many singular spectral measures \cite{AL20,DHL13}, i.e., there exist spectra with Beurling dimensions zero. Recently, the authors found that the (quasi) Beurling dimensions of the spectra of some Moran self-similar spectral measures (including the spectral Bernoulli convolution)  possess an intermediate value property, i.e., the (quasi) Beurling dimensions of their spectra can attain any prescribed value from zero to the upper entropy dimension of the corresponding measure \cite{LWu122,LWu222}. These results are in stark contrast with the case of the Lebesgue measure.

In this paper, we study the spectral structure and the intermediate value property of the Beurling dimensions of spectra for a class of self-affine spectral measure $\mu=\mu_{A,\mathcal{D}}$, which satisfies
  \begin{equation}\label{eqiis}
  \mu_{A,\mathcal{D}}(\cdot)=\frac{1}{\#\mathcal{D}}\sum_{d\in\mathcal{D}}\mu_{A,\mathcal{D}}(A(\cdot)-d),
  \end{equation}
  where $A=\begin{pmatrix}
		n & 0 \\
		0 & m \\
	\end{pmatrix}$ with $1<n\le m$ and $n,m\in\mathbb{R}$ is an expanding matrix and
\[
\mathcal{D}=\left\{
\begin{pmatrix}
		0  \\
		0 \\
	\end{pmatrix}, \begin{pmatrix}
		1  \\
		0 \\
	\end{pmatrix},\begin{pmatrix}
		0  \\
		1  \\
	\end{pmatrix}\right\}.
\]
We call $\mu_{A,\mathcal{D}}$ the {\it Sierpi\'{n}ski-type measure}, which plays an important role in fractal geometry and geometric measure theory.

 In \cite{DJ07}, Dutkay and Jorgensen considered the case that $n=m\in\mathbb{Z}$ and proved that $\mu_{A,\mathcal{D}}$ is a spectral measure if and only if $n\in 3\mathbb{Z}$. Subsequently, Li \cite{JLL10} extended Dutkay and Jorgense's result to the case $n\neq m$ and proved that $\mu_{A,\mathcal{D}}$ is a spectral measure if and only if $n,m\in 3\mathbb{Z}$. In \cite{DLau15}, Deng and Lau made a start in studying the spectrality of $\mu_{A,\mathcal{D}}$ when $A$ is a real expanding matrix with $n=m\in\mathbb{R}$ and they proved the only spectral Sierpi\'{n}ski measure are of $n\in 3\mathbb{Z}$. Recently, Dai, Fu and Yan \cite{DFY21} completely characterized the spectrality of $\mu_{A,\mathcal{D}}$ when $A$ is a real expanding matrix. They proved that $\mu_{A,\mathcal{D}}$ is a spectral measure if and only if $n,m\in 3\mathbb{Z}$. So, throughout this paper, we always make the following assumption:

 $\mathbf{Assumption}$. Assume that $n=3q_1,m=3q_2$ with $q_1,q_2\in\mathbb{Z}$ and $1\leq q_1\leq q_2$.

Our first result establishes the following characterization of maximal
orthogonal set of the measure $\mu_{A,\mathcal{D}}$ via  maximal tree mapping (see Definition \ref{deftree}). It extends the result of An, Dong and He  \cite{ADH22}, which is about the Sierpi\'{n}ski type self-similar spectral measure (i.e., $n=m$), to self-affine case (i.e., $n\not=m$). The original idea is due to Dutkay et al. \cite{DHS09} who characterized the maximal orthogonal set of $\mu_{4,\{0,2\}}$ by using the so-called tree labeling method.

\begin{theorem}\label{max}
Let $\Lambda$ be a countable subset of $\mathbb{R}^2$ containing $\textbf{0}$. Then $\Lambda$ is a maximal orthogonal set of $\mu_{A,\mathcal{D}}$ if and only if there exists a maximal tree mapping $\tau$ such that $\Lambda=\tau^*(\Theta_3^\tau)$.
\end{theorem}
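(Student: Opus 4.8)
The plan is to adapt the tree-labeling method of Dutkay, Han and Sun to the present two-dimensional self-affine situation. Set $m_{\mathcal{D}}(\xi)=\frac13\sum_{d\in\mathcal{D}}e^{-2\pi i\langle d,\xi\rangle}$; since $A$ is symmetric, $\widehat{\mu}_{A,\mathcal{D}}(\xi)=\prod_{k\ge1}m_{\mathcal{D}}(A^{-k}\xi)$, the product converging normally, so $\widehat{\mu}_{A,\mathcal{D}}$ vanishes exactly where some factor does, i.e.\ $\mathcal{Z}(\widehat{\mu}_{A,\mathcal{D}})=\bigcup_{k\ge1}A^{k}\mathcal{Z}(m_{\mathcal{D}})$. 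Because $\langle e_{\lambda},e_{\lambda'}\rangle_{L^{2}(\mu_{A,\mathcal{D}})}=\widehat{\mu}_{A,\mathcal{D}}(\lambda-\lambda')$, the set $\Lambda$ is orthogonal iff $(\Lambda-\Lambda)\setminus\{\mathbf{0}\}\subseteq\mathcal{Z}(\widehat{\mu}_{A,\mathcal{D}})$, and it is a \emph{maximal} orthogonal set iff, moreover, every $\gamma\in\mathbb{R}^{2}$ with $\gamma-\Lambda\subseteq\mathcal{Z}(\widehat{\mu}_{A,\mathcal{D}})\cup\{\mathbf{0}\}$ lies in $\Lambda$. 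A direct computation gives $\mathcal{Z}(m_{\mathcal{D}})=\bigl((\tfrac13,\tfrac23)+\mathbb{Z}^{2}\bigr)\cup\bigl((\tfrac23,\tfrac13)+\mathbb{Z}^{2}\bigr)$, and the Assumption $n=3q_{1},m=3q_{2}$ then yields $A\mathcal{Z}(m_{\mathcal{D}})=\mathcal{C}+(n\mathbb{Z}\times m\mathbb{Z})$ with $\mathcal{C}=\{(q_{1},2q_{2}),(2q_{1},q_{2})\}$. In particular $A^{k}\mathcal{Z}(m_{\mathcal{D}})\subseteq\mathbb{Z}^{2}$ for every $k\ge1$, so $\mathbf{0}\in\Lambda$ forces $\Lambda\subseteq\mathbb{Z}^{2}$, and modulo $A\mathbb{Z}^{2}$ the \emph{only} admissible $3$-element digit set $C\subseteq\mathbb{Z}^{2}$ with $\mathbf{0}\in C$ and $(C-C)\setminus\{\mathbf{0}\}\subseteq A\mathcal{Z}(m_{\mathcal{D}})$ is $\{\mathbf{0},(q_{1},2q_{2}),(2q_{1},q_{2})\}$; all admissible digit sets are thus translates of this one, and the choice of integer representatives for the labels $\tau(I)$ is exactly the freedom recorded in Definition \ref{deftree}. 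These observations form the combinatorial backbone of both implications.

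For the ``if'' direction, assume $\Lambda=\tau^{*}(\Theta_{3}^{\tau})$ for a maximal tree mapping $\tau$. To verify orthogonality, take distinct $\lambda=\tau^{*}(\theta),\ \lambda'=\tau^{*}(\theta')$ and let $k$ be the first level at which $\theta$ and $\theta'$ branch. The identical prefixes contribute equal amounts; since $A$ is expanding and the non-zero labels along an admissible branch are summable, only finitely many later terms are non-zero, so $\lambda-\lambda'=A^{k-1}\bigl((c-c')+A w\bigr)$ where $c\ne c'$ are the two admissible digits assigned by $\tau$ at the branching node and $w\in\mathbb{Z}^{2}$. As $A^{-1}(c-c')\in\mathcal{Z}(m_{\mathcal{D}})$ and $m_{\mathcal{D}}$ is $\mathbb{Z}^{2}$-periodic, $A^{-1}\bigl((c-c')+Aw\bigr)\in\mathcal{Z}(m_{\mathcal{D}})$, whence $\lambda-\lambda'\in A^{k}\mathcal{Z}(m_{\mathcal{D}})\subseteq\mathcal{Z}(\widehat{\mu}_{A,\mathcal{D}})$. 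For maximality, suppose $\gamma\notin\Lambda$ satisfied $\gamma-\Lambda\subseteq\mathcal{Z}(\widehat{\mu}_{A,\mathcal{D}})\cup\{\mathbf{0}\}$. Comparing $\gamma$ against the members of $\Lambda$ one level at a time (starting from $\mathbf{0}\in\Lambda$) forces $\gamma$ onto a single infinite branch of the tree whose labels are all determined by $\gamma$; the clause distinguishing a \emph{maximal} tree mapping is designed precisely so that such a branch must eventually coincide with the zero-label branch, which puts $\gamma\in\tau^{*}(\Theta_{3}^{\tau})=\Lambda$, a contradiction.

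For the ``only if'' direction, let $\Lambda$ be a maximal orthogonal set with $\mathbf{0}\in\Lambda$; I construct $\tau$ recursively. Reducing modulo $A\mathbb{Z}^{2}$, the difference condition forces every $\lambda\in\Lambda$ to satisfy $\lambda\equiv c\pmod{A\mathbb{Z}^{2}}$ for some $c$ in the digit set above, and maximality forces all three classes to be non-empty; this fixes the root labels of $\tau$ (up to the permitted choice of representatives) and splits $\Lambda=\bigsqcup_{c}\Lambda_{c}$ with $\Lambda_{c}\subseteq c+A\mathbb{Z}^{2}$. Writing $\Lambda_{c}=c+A\Lambda_{c}'$, one checks --- using $A^{k}\mathcal{Z}(m_{\mathcal{D}})\subseteq\mathbb{Z}^{2}$, which allows differences at levels $\ge 2$ to be divided by $A$ and to land back inside $\mathcal{Z}(\widehat{\mu}_{A,\mathcal{D}})$ --- that each $\Lambda_{c}'$ is again a maximal orthogonal set of $\mu_{A,\mathcal{D}}$ containing $\mathbf{0}$; iterating generates the labels of $\tau$ at every node. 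Since each $\lambda\in\Lambda$ is a fixed integer vector, the bounded digits in its expansion $\lambda=\sum_{j\ge1}A^{j-1}c_{j}$ must vanish for all large $j$ ($A$ expanding again), so $\lambda$ corresponds to an eventually-zero branch and $\Lambda\subseteq\tau^{*}(\Theta_{3}^{\tau})$; the reverse inclusion, and the fact that the $\tau$ so built is a genuine \emph{maximal} tree mapping, both follow from the maximality of $\Lambda$, since a non-maximal $\tau$ would exhibit an escaping branch yielding a point that enlarges $\Lambda$ while keeping orthogonality.

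The step I expect to be the crux is the maximality bookkeeping in both directions: isolating exactly which infinite branches of the tree must be excluded and matching this with the condition packaged in Definition \ref{deftree}. A secondary point --- heavier than in the one-dimensional Cantor setting of Dutkay, Han and Sun, and than in the self-similar case $n=m$ treated by An, Dong and He --- is keeping the two unequal scalings under control through $A\mathcal{Z}(m_{\mathcal{D}})=\mathcal{C}+(n\mathbb{Z}\times m\mathbb{Z})$: the two coordinates of the labels live in different residue systems, and the recursive peeling of digits must be carried out consistently in both coordinates at once.
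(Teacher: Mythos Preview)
Your approach is the paper's: orthogonality via the first branching level, maximality by contradiction through a ``fourth admissible digit'', and the converse by recursively peeling off $A$-adic layers and showing each piece is again maximal orthogonal.  Two points, however, need tightening.

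First, in the converse direction your assertion that each $\Lambda_{c}'=A^{-1}(\Lambda_{c}-c)$ is a maximal orthogonal set \emph{containing $\mathbf{0}$} is not correct beyond the first level.  Only $\Lambda_{\mathbf{0}}'$ is guaranteed to contain $\mathbf{0}$; for the other two branches (and at all deeper nodes) the digit set is merely a translate $a+\mathcal{C}_{q_1,q_2}\pmod{A}$ for some $a\in\mathcal{E}_{q_1}$, and $\mathbf{0}$ need not occur.  This is exactly the $e_I$ appearing in clause~(ii) of Definition~\ref{deftree}, and the paper's induction is phrased so as to carry the translate along rather than force it to vanish.  Your earlier remark that ``all admissible digit sets are translates'' shows you have the right picture; the recursion just must not assume the translate is trivial.

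Second, in the maximality half of the ``if'' direction you misdescribe the role of clause~(iii).  It is not clause~(iii) that makes $\gamma$'s branch eventually zero-labelled---that follows already from $\gamma\in\mathbb{Z}^{2}$ together with Proposition~\ref{exp}.  What clause~(iii) actually provides is that \emph{every} finite prefix $I\in\Theta_{3}^{*}$ is followed by some genuine element of $\Lambda$.  This is what lets the paper, at the first level $s$ where $\gamma$'s digit $c_{s}$ fails to coincide with a tree label, compare $\gamma$ against one element of $\Lambda$ in \emph{each} of the three branches $i_{1}\cdots i_{s-1}j$ ($j\in\Theta_{3}$) and conclude that $\{c_{s}\}\cup\{\tau(i_{1}\cdots i_{s-1}j):j\in\Theta_{3}\}$ would be a mutually orthogonal set of size four in $L^{2}(\delta_{A^{-1}\mathcal{D}})$, which is impossible.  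Your sketch gestures at this mechanism but attributes the conclusion to the wrong clause.
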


When $\tau$ is a regular mapping (see Definition \ref{remal}),  on the basis of Theorem \ref{max}, we have the following:

\begin{theorem}\label{ppd}
Let $\tau$ be a regular mapping and $\Lambda(\tau)=\tau^*(\Theta_3^\tau)$. If
\[
\ell_{\max}:=\max_{k\geq 1}\{\ell_k\}<\infty,
\]
then $\Lambda(\tau)$ is a spectrum of $\mu_{A,\mathcal{D}}$.
\end{theorem}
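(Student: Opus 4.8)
The plan is to derive completeness from Theorem~\ref{max} through the Jorgensen--Pedersen criterion together with the self-affine functional equation of the ``$Q$-function''. Write $\mu=\mu_{A,\mathcal D}$ and $m_{\mathcal D}(\xi)=\tfrac13\bigl(1+e^{-2\pi i\xi_1}+e^{-2\pi i\xi_2}\bigr)$; since $A$ is diagonal, $\widehat\mu(\xi)=\prod_{j\ge1}m_{\mathcal D}(A^{-j}\xi)$ and $\widehat\mu(\xi)=m_{\mathcal D}(A^{-1}\xi)\,\widehat\mu(A^{-1}\xi)$. By Theorem~\ref{max}, $\Lambda(\tau)=\tau^{*}(\Theta_3^\tau)$ is a maximal orthogonal set of $\mu$ containing $\mathbf 0$, hence $E(\Lambda(\tau))$ is an orthonormal system in $L^{2}(\mu)$; by the Jorgensen--Pedersen lemma it is an orthonormal basis, i.e.\ $\Lambda(\tau)$ is a spectrum of $\mu$, if and only if
\[
Q(\xi):=\sum_{\lambda\in\Lambda(\tau)}\bigl|\widehat\mu(\xi+\lambda)\bigr|^{2}=1\qquad\text{for all }\xi\in\mathbb R^{2}.
\]
Since orthonormality gives $Q\le 1$ everywhere and $Q(\mathbf 0)=1$, the task is to prove $Q\ge 1$ everywhere.

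The next step sets up the functional equation forced by the tree. Because $n=3q_1$, $m=3q_2$, one fixes a three-element set $\mathcal C\subset\mathbb Z^{2}$ making $(A,\mathcal D,\mathcal C)$ a Hadamard triple -- this is exactly where the standing Assumption $n,m\in3\mathbb Z$ is used. In the labelling producing $\Lambda(\tau)$, the three level-one labels form a $\nu_1$-orthogonal triple (a translate of $\mathcal C$), where $\nu_1=\tfrac13\sum_{d\in\mathcal D}\delta_{A^{-1}d}$, $\widehat\mu=\widehat{\nu_1}\,(\widehat\mu\circ A^{-1})$ and $\widehat{\nu_1}(\xi)=m_{\mathcal D}(A^{-1}\xi)$; writing $\tau_{c}$ for the subtree of $\tau$ below the label $c$, which is again a maximal tree mapping, Theorem~\ref{max} yields the disjoint decomposition $\Lambda(\tau)=\bigsqcup_{c}\bigl(c+A\,\Lambda(\tau_{c})\bigr)$ with $\Lambda(\tau_c)\subset\mathbb Z^{2}$. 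Using the $\mathbb Z^{2}$-periodicity of $m_{\mathcal D}$ one obtains
\[
Q(\xi)=\sum_{c}\bigl|m_{\mathcal D}(A^{-1}(\xi+c))\bigr|^{2}\,Q_{\tau_{c}}\bigl(A^{-1}(\xi+c)\bigr),
\]
and, since the Hadamard property gives $\sum_{c}|m_{\mathcal D}(A^{-1}(\xi+c))|^{2}=1$, iterating $k$ times expresses $Q(\xi)=\sum_{|I|=k}w_{I}(\xi)\,Q_{\tau_{I}}(\eta_{I}(\xi))$ as a genuine weighted average: $w_{I}(\xi)\ge0$, $\sum_{|I|=k}w_{I}(\xi)=1$, $\tau_{I}$ is the depth-$k$ subtree of $\tau$ along the word $I$, and $\eta_{I}(\xi)=A^{-k}\xi+\sum_{j=1}^{k}A^{-(k-j+1)}c_{I,j}$, $c_{I,j}$ being the labels read along $I$; in particular $\{\eta_{I}(\xi):|I|=k,\ k\ge1\}$ stays in a fixed bounded set for bounded $\xi$.

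The heart of the proof -- and the step I expect to be the main obstacle -- is to combine these identities with the two hypotheses on $\tau$. Regularity ensures that every subtree $\tau_{I}$ occurring above is again regular, and $\ell_k(\tau_I)\le\ell_{\max}<\infty$ makes this uniform; by the structure of regular mappings in Definition~\ref{remal} this should force the family $\{\tau_{I}\}$ to fall, up to the relevant equivalence, into finitely many ``types'' $\tau^{(0)},\dots,\tau^{(s)}$, among which $\tau^{(0)}$ is the canonical regular mapping whose associated set $\Lambda(\tau^{(0)})$ is a spectrum of $\mu$ -- the base case, which follows from the spectrality of $\mu_{A,\mathcal D}$ established by Dai--Fu--Yan (or by applying the present argument to $\tau^{(0)}$ directly), so that $Q_{\tau^{(0)}}\equiv1$. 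One then runs a renewal/propagation argument on the defects $D_{i}:=\sup_{\xi}\bigl(1-Q_{\tau^{(i)}}(\xi)\bigr)$: the functional equation gives $1-Q_{\tau^{(i)}}(\xi)=\sum_{|I|=k}w_{I}(\xi)\bigl(1-Q_{\tau^{(i)}_I}(\eta_{I}(\xi))\bigr)$, all the $\eta_{I}(\xi)$ lie in a fixed compact set, and since $\tau^{(0)}$ is reachable from every type and $\mathbf 0$ -- where every $Q_{\tau^{(i)}}$ equals $1$ because $\mathbf 0\in\Lambda(\tau^{(i)})$ and $\widehat\mu(\mathbf 0)=1$ -- is the fixed point of the $c=\mathbf 0$ branch, one forces $D_{i}=D_{0}=0$ for all $i$; equivalently, a ``dip'' $Q(\xi_0)=1-2\delta<1$ would propagate, via $\xi_{k+1}=A^{-1}(\xi_k+c_k)$, to a dip of depth $\ge 2\delta$ in some $Q_{\tau^{(0)}}$, contradicting $Q_{\tau^{(0)}}\equiv1$. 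Hence $Q\equiv1$, so $\Lambda(\tau)$ is a spectrum of $\mu$. The bulk of the work is the bookkeeping in this last step -- extracting the finitely many subtree types from Definition~\ref{remal} and checking that $\ell_{\max}<\infty$ makes all the estimates uniform; the reduction to $Q$, the functional equation, and the Hadamard-triple computations are then routine.
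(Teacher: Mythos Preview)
Your reduction to the Jorgensen--Pedersen criterion and the functional equation
\[
Q_{\tau}(\xi)=\sum_{c}\bigl|m_{\mathcal D}(A^{-1}(\xi+c))\bigr|^{2}\,Q_{\tau_{c}}\bigl(A^{-1}(\xi+c)\bigr)
\]
are correct, and iterating does express $Q_\tau(\xi)$ as a convex combination of the $Q_{\tau_I}(\eta_I(\xi))$. The gap is the claim that the subtrees $\tau_I$ fall into finitely many types. Nothing in Definition~\ref{remal} forces this: condition~(ii) allows the elements $e_I\in\mathcal E_{q_1}$ to be chosen independently for every word $I$, so $I\mapsto\tau_I$ can encode an arbitrary $\mathcal E_{q_1}$-labelled tree, and there are in general uncountably many distinct subtrees. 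The hypothesis $\ell_{\max}<\infty$ only bounds the number of nonzero values of $\tau$ along the $0$-tail after the last nonzero digit of a word; it says nothing about the $e_I$'s and produces no finite-type structure. Without finitely many types your propagation step stalls: a dip $Q_\tau(\xi_0)\le1-2\delta$ does propagate to \emph{some} branch, but you cannot choose which, so you obtain $Q_{\tau_{I_k}}(\xi_k)\le1-2\delta$ with $\xi_k$ bounded and the $\tau_{I_k}$ potentially all different --- there is no single $Q_{\tau^{(0)}}\equiv1$ to contradict. (Incidentally, subtrees $\tau_I$ are typically not regular in the sense of Definition~\ref{remal}(i), since $\tau_I(0^kj)=\tau(I0^kj)$ need not equal $j\bigl(\begin{smallmatrix}q_1\\-q_2\end{smallmatrix}\bigr)$.)

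The paper takes a different route that avoids this obstacle entirely. Instead of descending the tree, it works with the finite truncations $\Lambda_{\alpha_n}=\{\lambda_k:|k|\le\alpha_n\}$, $\alpha_n=(3^n-1)/2$, and the factorisation $\mu=\mu_n*\nu_n$. Two ingredients replace your finite-type claim: (a) each $\Lambda_{\alpha_n}$ is a spectrum of the discrete measure $\mu_n$ (Lemma~\ref{lemss}); and (b) --- this is exactly where $\ell_{\max}<\infty$ enters --- for $\xi$ in a fixed rectangle and every $\lambda\in\Lambda_{\alpha_{n+s}}\setminus\Lambda_{\alpha_n}$ one has the uniform tail bound $|\widehat{\nu}_{n+s}(\xi+\lambda)|^{2}\ge c>0$ (Lemma~\ref{lemssp}), obtained by tracking the at most $\ell_{\max}$ nonzero digits in the $0$-tail of $\lambda$ and bounding each factor by $C_\tau:=\min_{|x^{(1)}|\le q_1,\ |x^{(2)}|\le q_2}|\widehat\mu(x)|>0$. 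Combining (a) and (b) gives the increment inequality
\[
Q_{n+s}(\xi)\ \ge\ Q_n(\xi)+c\Bigl(1-\sum_{\lambda\in\Lambda_{\alpha_n}}|\widehat{\mu}_{n+s}(\xi+\lambda)|^{2}\Bigr),
\]
and letting $s\to\infty$ then $n\to\infty$ forces $0\ge c(1-Q_\Lambda(\xi))$, hence $Q_\Lambda\equiv1$. The point is that $\ell_{\max}<\infty$ is used as a uniform bound in a product of $|\widehat\mu|$-values, not as a structural finiteness condition on the family of subtrees.
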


We will introduce the above notations and notion precisely in Section \ref{sec23}.
\begin{remark}
In fact, Theorem \ref{ppd} can be extended to more general maximal tree mapping. However, Theorem \ref{ppd} is enough for us to consider the intermediate property of Beurling dimensions with respect to the spectra of $\mu_{A,\mathcal{D}}$, which we are interested in.
\end{remark}
 Unlike the self-similar spectral measures, we don't know the exact upper bound of the Beurling dimensions of spectra for $\mu_{A,\mathcal{D}}$.  Tang and the second author \cite{TW21}, Zhang and Xiao \cite{ZX20} independently proved that for any orthogonal set $\Lambda$ of $\mu_{A,\mathcal{D}}$,
$\dim_{Be}(\Lambda)\leq\frac{\log3}{\log 3q_1}$. Moreover,  if $\Lambda$ is a spectrum of $\mu_{A,\mathcal{D}}$, under the condition that
\begin{eqnarray}\label{eq-upperconditioin1}
\sup_{\lambda\in\Lambda}\inf_{\gamma\in\Lambda}\|A^{-p}\lambda-\gamma\|<+\infty,
\end{eqnarray}
 we have
\[
\frac{\log3}{\log 3q_2}\leq \dim_{Be}(\Lambda)\leq \frac{\log 3}{\log 3q_1}.
\] In \cite{LWu322}, the Beurling dimension of a class of canonical spectra of $\mu_{A,\mathcal{D}}$, which is believed to have the largest Beurling dimension in all spectra, is determined and it equals $\frac{\log 3}{\log 3q_2}$. We will show that the value $\frac{\log 3}{\log 3q_2}$ is in fact the exact upper of Beurling dimensions of all spectra (Theorem \ref{thexu}). It is strictly less than the Hausdorff dimension of the support of $\mu_{A,\mathcal{D}}$  ($=\frac{\log\left(2^u+1\right)}{\log 3q_1},$ where $u=\log 3q_1/\log 3q_2$) , since $\frac{\log 3}{\log 3q_2}<\frac{\log\left(2^u+1\right)}{\log 3q_1}$ by an easy computation. This is in stark contrast with the self-similar case.

\begin{theorem}\label{thexu}
For any  orthogonal set $\Lambda$ of $\mu_{A,\mathcal{D}}$, we have that
$\dim_{Be}(\Lambda)\leq\frac{\log3}{\log 3q_2}$. Moreover, suppose $\Lambda$ is a spectrum of  $\mu_{A,\mathcal{D}}$, and if there exists an integer $p\geq1$ such that \eqref{eq-upperconditioin1} holds,
then $\dim_{Be}(\Lambda)=\frac{\log 3}{\log 3q_2}$.
\end{theorem}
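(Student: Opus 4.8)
The plan is to treat the two assertions separately. The inequality $\dim_{Be}(\Lambda)\le \frac{\log 3}{\log 3q_2}$ for an arbitrary orthogonal set is the new point; the matching lower bound for spectra satisfying \eqref{eq-upperconditioin1} is already available from \cite{TW21,ZX20} as quoted above, so the stated equality will follow immediately by combining the two.

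For the upper bound I would argue purely on the Fourier side. Recall $\widehat{\mu_{A,\mathcal{D}}}(\xi)=\prod_{j\ge 1}m_{\mathcal{D}}(A^{-j}\xi)$ with mask $m_{\mathcal{D}}(\xi)=\frac13(1+e^{-2\pi i\xi_1}+e^{-2\pi i\xi_2})$ (here $A^T=A$). First I would record the elementary fact that $m_{\mathcal{D}}(\xi)=0$ forces $\{e^{-2\pi i\xi_1},e^{-2\pi i\xi_2}\}=\{e^{\pm 2\pi i/3}\}$, so $Z(m_{\mathcal{D}})=\{(\tfrac13,\tfrac23),(\tfrac23,\tfrac13)\}+\mathbb{Z}^2$; in particular every $z\in Z(m_{\mathcal{D}})$ has $|z_2|\ge\tfrac13$. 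Since the infinite product vanishes exactly when one factor does, $Z(\widehat{\mu_{A,\mathcal{D}}})=\bigcup_{j\ge1}A^jZ(m_{\mathcal{D}})$, and for $\xi=A^j z$ in the $j$-th piece one gets $\|\xi\|\ge|\xi_2|=(3q_2)^j|z_2|\ge (3q_2)^j/3$. This is the step where the larger eigenvalue $3q_2$, rather than $3q_1$, enters.

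The main argument is then a scale–truncation plus a dimension count. Fix $x\in\mathbb{R}^2$, let $h$ be large, and put $\Lambda_h=\Lambda\cap B(x,h)$. If $\lambda\ne\lambda'\in\Lambda_h$, orthogonality of $\Lambda$ in $L^2(\mu_{A,\mathcal{D}})$ gives $\widehat{\mu_{A,\mathcal{D}}}(\lambda-\lambda')=0$, hence $\lambda-\lambda'\in A^{j_0}Z(m_{\mathcal{D}})$ for some $j_0\ge1$; combining with $\|\lambda-\lambda'\|<2h$ and the lower bound above yields $(3q_2)^{j_0}<6h$, i.e. $j_0\le J:=\lfloor\log(6h)/\log(3q_2)\rfloor$. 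Thus all differences of elements of $\Lambda_h$ lie in $\bigcup_{j=1}^J A^jZ(m_{\mathcal{D}})$, which is precisely the zero set of $\widehat{\mu^{(J)}}$, where $\mu^{(J)}:=3^{-J}\sum_{(d_1,\dots,d_J)\in\mathcal{D}^J}\delta_{A^{-1}d_1+\cdots+A^{-J}d_J}$ has Fourier transform $\prod_{j=1}^J m_{\mathcal{D}}(A^{-j}\xi)$. Consequently $\{e^{-2\pi i\langle\lambda,\cdot\rangle}:\lambda\in\Lambda_h\}$ is an orthogonal family of unit vectors in $L^2(\mu^{(J)})$, whose dimension is at most $\#\supp\mu^{(J)}\le 3^J$; therefore $\#\Lambda_h\le 3^J\le (6h)^{\log 3/\log 3q_2}$. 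Since this bound is uniform in $x$, $D_r^+(\Lambda)=0$ for every $r>\log 3/\log 3q_2$, whence $\dim_{Be}(\Lambda)\le\log 3/\log 3q_2$. If moreover $\Lambda$ is a spectrum and \eqref{eq-upperconditioin1} holds, the bound $\dim_{Be}(\Lambda)\ge\log 3/\log 3q_2$ of \cite{TW21,ZX20} combined with this gives $\dim_{Be}(\Lambda)=\log 3/\log 3q_2$.

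The part needing care — and the reason the sharp exponent was not seen before — is the choice of the truncation level. A natural but lossy route is Bessel's inequality $\sum_\lambda|\widehat{\mu_{A,\mathcal{D}}}(\xi-\lambda)|^2\le1$ together with an integral lower bound for $|\widehat{\mu_{A,\mathcal{D}}}|^2$ over balls; but $\widehat{\mu_{A,\mathcal{D}}}$ does not tend to $0$ and can be small on large sets, so such estimates are delicate and naturally produce only the weaker exponent $\log3/\log3q_1$. The key observation is that cutting the infinite product exactly at the level $J\asymp\log h/\log 3q_2$ dictated by the slower-contracting ($x_2$) direction turns the problem into a finite-dimensional linear-independence count, which both bypasses the analytic difficulty and yields the optimal exponent.
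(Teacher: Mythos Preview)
Your argument is correct and takes a genuinely different route from the paper's. The paper proceeds by projection: it first shows (Proposition~\ref{mulp}) that an orthogonal set of $\mu_{A,\mathcal D}$ meets every horizontal line in at most one point, so $\pi_y$ is injective on $\Lambda$; then it invokes the result of \cite{DFY21} (Proposition~\ref{projj}) that $\pi_y(\Lambda)$ is an orthogonal set for the one-dimensional self-similar measure $\mu_{3q_2,\{0,1,2\}}$, and finally cites the self-similar upper bound $\dim_{Be}\le\log 3/\log 3q_2$ from \cite{DHSW11}. Your proof stays in $\mathbb{R}^2$ throughout: the observation that every $z\in Z(m_{\mathcal D})$ has $|z_2|\ge\tfrac13$ plays the role of the injective projection, and the truncation at level $J\asymp\log h/\log 3q_2$ together with the dimension count in $L^2(\mu^{(J)})$ replaces the appeal to the one-dimensional theorem, making the argument self-contained. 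Both proofs ultimately exploit the same structural fact---that the larger eigenvalue $3q_2$ governs the admissible scale $j_0$ for a difference $\lambda-\lambda'\in A^{j_0}Z(m_{\mathcal D})$ inside a ball of radius $h$---but the paper's version is more modular (reducing to known $1$-dimensional results) while yours avoids any dependence on \cite{DHSW11,DFY21}. For the second assertion both arguments simply combine the upper bound with the lower bound from \cite{TW21}.
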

\begin{remark}
Theorem 1.2 in \cite{Shi} and Theorem 1.2 in \cite{F19} tell us that for any orthogonal set $\Lambda$ of $\mu_{A,\mathcal{D}}$, we have that $\dim_{Be}(\Lambda)\leq \dim_e(\mu_{A,\mathcal{D}})$, where $\dim_e (\mu_{A,\mathcal{D}})$ denotes the entropy dimension of the measure $\mu_{A,\mathcal{D}}$; see Section \ref{rege1} for the definition. A simple computation shows that $\frac{\log 3}{\log 3q_2}<\dim_e(\mu_{A,\mathcal{D}})<\frac{\log 3}{\log 3q_1}$ (see Proposition \ref{propeed}). So, Theorem \ref{thexu} implies that the entropy dimension is not an optimal upper bound for $\mu_{A,\mathcal{D}}$, which is in stark contrast with the self-similar case \cite{DHSW11,HKTW18}.
\end{remark}

On the other hand, Theorem 1.3 in \cite{AL20} tells us that there exists a spectrum of $\mu_{A,\mathcal{D}}$ whose Beurling dimension is zero. So, there is a natural question: does there exist some spectrum of $\mu_{A,\mathcal{D}}$ with other Beurling dimension except $0$ and $\frac{\log3}{\log 3q_2}$?
The final contribution of this paper is the following result on the intermediate property of the Beurling dimensions of spectra of $\mu_{A,\mathcal{D}}$, which answers Problem 3.3 in \cite{LWu322}.

\begin{theorem}\label{thexu1}
For any $t\in\left[0,\frac{\log3}{\log 3q_2}\right]$, there exist uncountably many spectra $\Lambda_t$ of $\mu_{A,\mathcal{D}}$ such that their Beurling dimensions are equal to $t$.
\end{theorem}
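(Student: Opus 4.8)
The plan is to construct, for each target value $t \in \left[0, \frac{\log 3}{\log 3q_2}\right]$, an explicit family of regular tree mappings $\tau$ whose associated spectra $\Lambda(\tau) = \tau^*(\Theta_3^\tau)$ have Beurling dimension exactly $t$, and then to verify that uncountably many mutually distinct such mappings yield this same value. By Theorem \ref{ppd}, as soon as the parameters $\ell_k$ governing the regular mapping $\tau$ are uniformly bounded (in fact I will take them to oscillate between $0$ and a fixed constant), the resulting set $\Lambda(\tau)$ is genuinely a spectrum of $\mu_{A,\mathcal{D}}$, so the whole problem reduces to a density/counting computation for $\Lambda(\tau)$.

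The core of the argument is a frequency argument on how often the ``expanding'' generation is used. Since the largest Beurling dimension $\frac{\log 3}{\log 3q_2}$ is realized (by \cite{LWu322}) along the canonical spectra that always grow in the $m = 3q_2$ direction, while sparse choices give dimension $0$ (by \cite{AL20}), the idea is to interpolate: fix a sequence of scales and, along a subsequence of generations of asymptotic density $\theta \in [0,1]$, let $\tau$ behave like the canonical ``full-growth'' mapping, and along the complementary generations make $\tau$ contribute essentially nothing new (the sparse regime). Counting points of $\Lambda(\tau)$ inside a ball $B(x,h)$ of radius comparable to $\|A^{N}\|$ then produces roughly $3^{\theta N}$ points in a region whose linear size is $(3q_2)^{N}$ up to controlled corrections, so that $D_r^+(\Lambda(\tau)) $ jumps from $\infty$ to $0$ precisely at $r = \theta \cdot \frac{\log 3}{\log 3q_2}$. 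Choosing $\theta = t \cdot \frac{\log 3q_2}{\log 3}$ gives $\dim_{Be}(\Lambda(\tau)) = t$. The endpoint $t=0$ is covered by \cite{AL20} and $t = \frac{\log 3}{\log 3q_2}$ by \cite{LWu322}, so only $t$ in the open interval needs the construction, though the same scheme handles the endpoints as degenerate cases.

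For the uncountability, I will exploit the freedom in \emph{which} subsequence of generations realizes the prescribed density $\theta$: the set of increasing sequences of integers of density $\theta$ is uncountable, and one must check that two such choices produce distinct spectra (e.g., by reading off from the finite truncations $\tau^*$ restricted to the first $N$ levels which generations were ``active'', so that distinct density-$\theta$ supports give distinct sets $\Lambda(\tau)$). This bookkeeping, plus the verification that the perturbations introduced to pin down the density do not change the limiting exponent, should be routine once the counting lemma is in place. One also has to keep $\ell_{\max} < \infty$ throughout so Theorem \ref{ppd} applies; this is automatic because each ``active'' generation contributes a bounded $\ell_k$ and each ``inactive'' one contributes $\ell_k = 0$ (or a fixed bounded value).

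The main obstacle I anticipate is the two-sided Beurling density estimate, i.e., showing that the counting is tight enough that the dimension is \emph{exactly} $t$ rather than merely bounded by $t$. The upper bound $\dim_{Be}(\Lambda(\tau)) \le t$ follows from a direct covering/counting argument over dyadic-type boxes adapted to the anisotropic matrix $A$ (using that $n \le m$, so balls of radius $h$ sit inside boxes of dimensions $\asymp h \times h$ but the relevant lattice spacing in the two coordinates differs), and this is essentially the method already used for the upper bound in Theorem \ref{thexu}. The lower bound requires producing, for infinitely many $h$, a ball $B(x_h, h)$ containing $\gtrsim h^{t - \varepsilon}$ points of $\Lambda(\tau)$; here one must carefully choose $h \asymp (3q_2)^{N}$ along the active generations and control the digit expansions so that a full ``block'' of $\approx 3^{\theta N}$ spectral points is genuinely packed into that ball. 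Handling the anisotropy — the fact that growth in the $n$-direction and the $m$-direction happens at different rates, so a round ball may capture different numbers of levels in each coordinate — is the delicate point, and is precisely where the structure of the Sierpiński-type digit set $\mathcal{D}$ (with its two ``off-diagonal'' digits) has to be used rather than treated as a black box.
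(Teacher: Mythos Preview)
Your overall strategy --- interpolate via a density parameter $\theta$ and invoke Theorem~\ref{ppd} --- matches the paper's, but there is a genuine gap in how you propose to realize it. A regular mapping $\tau$ in the sense of Definition~\ref{remal} always produces a set $\Lambda(\tau)$ indexed by all of $\Theta_3^* 0^\infty$; in particular $\Lambda(\tau)$ always has $3^N$ points coming from level $N$, and the values $\tau(Ij)$ at any fixed level lie in the bounded set $\Gamma_{q_1,q_2}$. So you cannot ``make $\tau$ contribute essentially nothing new'' at a complementary generation: the tree still branches three-fold there, and the bounded values of $\tau$ at that level cannot push points far away. If you instead literally restrict to the digit $0$ at inactive generations, the resulting set is the $F_t$ of the paper's Proposition~\ref{indt}, which has the correct Beurling dimension $t$ but is only a proper subset of $\Lambda_{\max}$ --- hence not maximal, not a spectrum, and not of the form $\tau^*(\Theta_3^\tau)$ for any regular $\tau$; Theorem~\ref{ppd} does not apply to it.

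Your remark that ``each inactive one contributes $\ell_k = 0$'' reveals the confusion: taking $\ell_k = 0$ for all $k$ gives precisely the canonical spectrum $\Lambda_{\max}$ with the \emph{largest} Beurling dimension, not a sparse one. The mechanism for sparsification is not the number $\ell_k$ of nonzero tail digits but their \emph{position}. The paper keeps $\ell_k \le 1$ throughout (so Theorem~\ref{ppd} applies) and, for those indices $k$ outside a chosen set $\Gamma_t$, places the single nonzero tail digit at position $m_k = k^2$, which pushes $\lambda_k$ out to norm $\asymp (3q_2)^{n(k)+k^2}$. This yields a decomposition $\Lambda_t = F_t \cup \Lambda'_t$ where $F_t \subset \Lambda_{\max}$ has $\dim_{Be} F_t = t$ (your density argument, made precise as Lemma~\ref{leseag}) and $\Lambda'_t$ is $b$-lacunary with $\dim_{Be}\Lambda'_t = 0$; finite stability of Beurling dimension then gives $\dim_{Be}\Lambda_t = t$. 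The uncountability is obtained by wiggling $m_k$ between $k^2$ and $k^2+1$ on $\Gamma_t^c$, not by varying the density-$\theta$ support as you suggest.
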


The paper is structured as follows. We prove Theorem \ref{max} in Section \ref{sec23}. We
prove Theorem \ref{ppd} in Section \ref{rege}. We prove Theorem \ref{thexu} in Section \ref{rege1}.  Finally,  We prove Theorem \ref{thexu1} in Section \ref{rege1}.

\section{Maximal orthogonal sets of $\mu_{A,\mathcal{D}}$}\label{sec23}

In \cite{DHS09}, Dutkay et al. gave a complete characterization of all the maximal orthogonal sets of $\mu_{4,\{0,2\}}$ by using the maximal tree labelling method and obtained a sufficient condition for a maximal orthogonal set to be a spectrum.

There are many papers studying the maximal orthogonal sets and spectral structure of different kind of fractal spectral measures by extending the method of tree labelling method, see \cite{ADH22,DHL13,CDL22,DW19,WY}. In particular, An, Dong and He \cite{ADH22} recently obtained a complete characterization of the maximal orthogonal sets of Sierpi\'{n}ski-type self-similar spectral measure and gave a sufficient condition for a maximal orthogonal set to be a spectrum. In this paper,  we continue the line and consider the Sierpi\'{n}ski-type self-affine spectral measures, which are usually far more subtle than self-similar spectral measures.

For a Borel measure $\mu$, its  orthogonal set is closely related to the zero set $\mathcal{Z}(\hat{\mu})$ of $\mu$, which is defined by $\mathcal{Z}(\hat{\mu}):=\{\xi\in\mathbb{R}:\hat{\mu}(\xi)=0\}$, where
\[
\hat{\mu}(\xi)=\int e^{-2\pi i\langle \xi,x\rangle}\mathrm{d}\mu(x)
\]
is the Fourier transform of $\mu.$  Precisely, it is easy to see  that for any orthogonal set of $\mu$,
\[
(\Lambda-\Lambda)\setminus \{\mathbf{0}\}\subseteq\mathcal{Z}(\hat{\mu}).
\] From \eqref{eqiis}, we have
\[
\hat{\mu}_{A,\mathcal{D}}(\xi)=\prod_{j=1}^{\infty}m_{\mathcal{D}}(A^{-j}\xi),~\xi\in\mathbb{R}^2,
\]
where
\[
m_{\mathcal{D}}(x)=\frac{1}{\#\mathcal{D}}\sum_{d\in\mathcal{D}}e^{-2\pi i\langle d,x\rangle}.
\]
It is easy to see that the zero set of $m_{\mathcal{D}}$ is
\[
\mathcal{Z}(m_{\mathcal{D}})=\{\xi:m_{\mathcal{D}}(\xi)=0\}=\left(\frac{1}{3}\begin{pmatrix}
		1 \\
		2\\
	\end{pmatrix}+\mathbb{Z}^2\right)\cup\left(\frac{2}{3}\begin{pmatrix}
		1 \\
		2\\
	\end{pmatrix}+\mathbb{Z}^2\right)
\]
and therefore
\begin{equation}\label{eqmle33}
\begin{split}
\mathcal{Z}(\hat{\mu}_{A,\mathcal{D}})&=\{\xi:\hat{\mu}_{A,\mathcal{D}}(\xi)=0\}=\bigcup_{k=1}^{\infty}A^k\mathcal{Z}(m_{\mathcal{D}})\\
&=\bigcup_{k=1}^{\infty}A^k\left(\frac{1}{3}\begin{pmatrix}
		1 \\
		2\\
	\end{pmatrix}+\mathbb{Z}^2\right)\cup\left(\frac{2}{3}\begin{pmatrix}
		1 \\
		2\\
	\end{pmatrix}+\mathbb{Z}^2\right)\\
&=\bigcup_{k=1}^{\infty}A^{k-1}\left(\begin{pmatrix}
		q_1 \\
		2q_2\\
	\end{pmatrix}+A\mathbb{Z}^2\right)\cup\left(\begin{pmatrix}
		2q_1 \\
		4q_2\\
	\end{pmatrix}+A\mathbb{Z}^2\right).
\end{split}
\end{equation}

Before giving the notion of maximal tree mapping, we need some preparations. Recall that $n=3q_1, m=3q_2$. Define
\[\begin{split}
\Gamma_{q_1,q_2}&=\begin{pmatrix}
		3q_1 & 0 \\
		0 & 3q_2 \\
	\end{pmatrix}\left[-\frac{1}{2},\frac{1}{2}\right)^2\cap \mathbb{Z}^2\\&=\left\{\begin{pmatrix}
		k \\
		l \\
	\end{pmatrix}\in\mathbb{Z}^2:-\frac{3q_1}{2}\leq k<\frac{3q_1}{2},-\frac{3q_2}{2}\leq l<\frac{3q_2}{2}\right\}.
\end{split}\]
We immediately have the following simple fact:
\begin{lemma}
$\Gamma_{q_1,q_2}$ is a complete residual system $\pmod{A}$ in $\mathbb{Z}^2$, i.e.,
\[
\mathbb{Z}^2=\bigcup_{r\in\Gamma_{q_1,q_2}}(A\mathbb{Z}^2+r),
\]
and the union is disjoint.
\end{lemma}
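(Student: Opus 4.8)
The plan is to reduce the two-dimensional statement to the elementary one-dimensional fact that a symmetric half-open interval of integer length $N$ is a complete residue system modulo $N$, and then to upgrade ``pairwise disjoint'' to ``disjoint and covering'' by a determinant count rather than by constructing representatives by hand.

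First I would record that $A=\mathrm{diag}(3q_1,3q_2)$ is diagonal, so $A\mathbb{Z}^2=3q_1\mathbb{Z}\times 3q_2\mathbb{Z}$, and hence two integer vectors $(k,l)$ and $(k',l')$ lie in the same coset of $A\mathbb{Z}^2$ if and only if $k\equiv k'\pmod{3q_1}$ and $l\equiv l'\pmod{3q_2}$. In particular the quotient group $\mathbb{Z}^2/A\mathbb{Z}^2$ has exactly $|\det A|=9q_1q_2$ elements.

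Next I would prove disjointness directly. Suppose $r=(k,l)$ and $r'=(k',l')$ both lie in $\Gamma_{q_1,q_2}$ and $(A\mathbb{Z}^2+r)\cap(A\mathbb{Z}^2+r')\neq\emptyset$; then $r-r'\in A\mathbb{Z}^2$, so $k-k'$ is a multiple of $3q_1$. But $k,k'\in[-3q_1/2,3q_1/2)$ forces $|k-k'|<3q_1$, hence $k=k'$, and the same argument applied to the second coordinate gives $l=l'$. Thus the cosets $A\mathbb{Z}^2+r$, $r\in\Gamma_{q_1,q_2}$, are pairwise distinct, equivalently pairwise disjoint.

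Finally I would count $\Gamma_{q_1,q_2}$: the admissible values of $k$ form a half-open interval of length $3q_1$ and therefore number exactly $3q_1$, and likewise there are exactly $3q_2$ admissible values of $l$, so $\#\Gamma_{q_1,q_2}=9q_1q_2$. Since $\Gamma_{q_1,q_2}$ then represents $9q_1q_2$ pairwise distinct cosets while $\mathbb{Z}^2/A\mathbb{Z}^2$ has only $9q_1q_2$ cosets altogether, every coset must be represented, which is precisely the identity $\mathbb{Z}^2=\bigcup_{r\in\Gamma_{q_1,q_2}}(A\mathbb{Z}^2+r)$; the disjointness has already been established. There is essentially no obstacle here; the one point that needs care is the bookkeeping with the half-open symmetric interval, so as not to miscount the endpoints by one, which is exactly why the definition of $\Gamma_{q_1,q_2}$ uses $[-\tfrac12,\tfrac12)^2$ rather than a closed box.
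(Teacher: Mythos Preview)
Your proof is correct. The paper does not actually supply a proof of this lemma---it is introduced with the phrase ``We immediately have the following simple fact'' and left unproven---so there is nothing to compare against; your argument (diagonal reduction to one-dimensional congruences, disjointness via the bound $|k-k'|<3q_1$, and covering via the determinant count $\#\Gamma_{q_1,q_2}=9q_1q_2=|\det A|$) is exactly the kind of routine verification the authors had in mind.
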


For $x\in \mathbb{R}$, we let $[x]$ denote the greatest integer less than or equal to $x$. The following lemma is well-known. We  present the proof here for completeness.

\begin{lemma}\label{exg}
Let $b\ge 2$ be an integer. For any $k\in \mathbb{Z},$  there exist a unique sequence $\{d_n\}$ with $d_n\in \{-[\frac{b}{2}],-[\frac{b}{2}]+1,\ldots,b-1-[\frac{b}{2}]\}(=[-\frac{b}{2},\frac{b}{2})\cap\mathbb{Z})$ for $n\ge 1$ and $d_n=0$ for sufficiently large $n$ such that
\[
k=\sum_{n=1}^\infty b^{n-1}d_n.
\]
\end{lemma}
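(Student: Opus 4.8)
The plan is to obtain existence by the natural ``centered'' division algorithm and uniqueness by a first-difference argument. Write $D_b:=\{-[b/2],-[b/2]+1,\ldots,b-1-[b/2]\}$ for the prescribed digit set. Two elementary facts about $D_b$ will be used repeatedly: it is a block of $b$ consecutive integers, hence a complete residue system modulo $b$ with $\diam D_b=b-1<b$; and $0\in D_b$ and $-1\in D_b$, while $1\in D_b$ as soon as $b\ge3$.

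For existence I would set $k_1:=k$ and define recursively $d_j$ to be the unique element of $D_b$ with $d_j\equiv k_j\pmod b$, together with $k_{j+1}:=(k_j-d_j)/b\in\mathbb{Z}$. Iterating the relation $k_j=d_j+bk_{j+1}$ gives $k=\sum_{n=1}^{N}b^{n-1}d_n+b^{N}k_{N+1}$ for all $N\ge1$, so it suffices to show $k_j=0$ for some $j$: from that point on $d_n=0$ and the expansion is finite. The termination is governed by the estimate $|k_{j+1}|\le(|k_j|+[b/2])/b\le |k_j|/b+1/2$. If $|k_j|\ge2$, this gives $|k_{j+1}|\le|k_j|/2+1/2<|k_j|$, and since $|k_{j+1}|,|k_j|$ are non-negative integers we even get $|k_{j+1}|\le|k_j|-1$; hence after finitely many steps we reach an index $j_0$ with $|k_{j_0}|\le1$, and then $|k_{j_0+1}|\le1/b+1/2<1$ because $b\ge3$, so $k_{j_0+1}=0$. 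I expect this finiteness step to be the main obstacle: one has to pick the right a priori bound and dispose of the small cases $|k_j|\le1$, and it is exactly here that $b\ge3$ is needed (for $b=2$ the set $\{-1,0\}$ lies on one side of $0$ and the orbit can fail to reach $0$; but in this paper $b$ is always $3q_1$ or $3q_2$, so $b\ge3$).

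For uniqueness, suppose $\sum_{n\ge1}b^{n-1}d_n=\sum_{n\ge1}b^{n-1}d_n'$ with both digit sequences lying in $D_b$ and eventually equal to $0$, and let $j$ be the least index with $d_j\ne d_j'$. Cancelling the agreeing lower-order terms and dividing the resulting identity by $b^{j-1}$ yields $d_j-d_j'=-b\sum_{n>j}b^{n-j-1}(d_n-d_n')$, so $b\mid (d_j-d_j')$; since $|d_j-d_j'|\le\diam D_b=b-1<b$, this forces $d_j=d_j'$, a contradiction. Hence $\{d_n\}$ is uniquely determined by $k$, and the proof is complete.
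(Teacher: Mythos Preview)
Your proof is correct and is in fact more complete and systematic than the argument in the paper. The paper handles existence by an ad hoc case split: for $|k|\le b$ it writes down the digits by inspection (assuming $b$ even), and for $|k|>b$ it invokes a decomposition $k=b^{n_0}\ell+s$ with $\ell,s\in(-b,b)\cap\mathbb{Z}$ and reduces to the first case; uniqueness is not addressed at all. Your approach instead runs the centered division algorithm, proves termination via the clean estimate $|k_{j+1}|\le|k_j|/b+\tfrac12$, and supplies a standard first-difference argument for uniqueness. This is both more transparent and more robust.

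Your observation about $b=2$ is a genuine correction to the lemma as stated: with $D_2=\{-1,0\}$ every finite sum $\sum 2^{n-1}d_n$ is nonpositive, so $k=1$ has no such representation, and the algorithm indeed cycles at $k_j=1$. The lemma should read $b\ge3$. As you note, this is harmless for the paper since it is applied only with $b=3q_1$ or $b=3q_2$ (hence $b\ge3$), but it is worth flagging.
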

\begin{proof}
Without loss of generality, we assume $b$ is even since the proof of the case that $b$ is odd is similar. Let $k\in \mathbb{Z}$.  If $|k|\le b$,

we consider the following three cases. \textsc{Case 1}. if $-\frac{b}{2}\leq k<\frac{b}{2}$, then let $d_1=k$ and $d_n=0$ for $n\ge 2$. \textsc{Case 2}. if $\frac{b}{2}\leq k\leq b$, then let $d_1=k-b, d_2=1$ and $d_n=0$ for $n\ge 3$. \textsc{Case 3}. if $-b\leq k<-\frac{b}{2}$, then let $d_1=b+k, d_2=-1$ and $d_n=0$ for $n\ge 3$.

If $|k|>b$, then there exist a unique positive integer $n_0$ and  two integer numbers $\ell, s\in(-b,b)\cap\mathbb{Z}$ such that $k=b^{n_{0}}\ell+s$. Then we decompose $\ell$ and $s$ by the procedure in the proceeding paragraph and obtain the desired result.
\end{proof}

The following result follows immediately from Lemma \ref{exg}.
\begin{prop}\label{exp}
For each $\omega=(\omega_1,\omega_2)\in \mathbb{Z}^2$, there exists a unique expansion of $\omega$ with respect to $A$, that is,
\[
\omega=\sum_{k=1}^\infty A^{k-1}c_k,
\]
where $c_k\in \Gamma_{q_1,q_2}$ for $k\ge 1$, and $c_k=\begin{pmatrix}
		0 \\
		0 \\
\end{pmatrix}$ for sufficiently large $k.$
\end{prop}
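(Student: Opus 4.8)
The plan is to reduce the statement to two independent scalar expansions by exploiting the diagonal form of $A$, and then quote Lemma~\ref{exg} in each coordinate. First I would observe that since $A=\mathrm{diag}(3q_1,3q_2)$, we have $A^{k-1}=\mathrm{diag}\bigl((3q_1)^{k-1},(3q_2)^{k-1}\bigr)$, so writing $c_k=\bigl((c_k)_1,(c_k)_2\bigr)^{\mathsf T}$ the vector identity $\omega=\sum_{k=1}^\infty A^{k-1}c_k$ is equivalent to the pair of scalar identities
\[
\omega_1=\sum_{k=1}^\infty (3q_1)^{k-1}(c_k)_1,\qquad \omega_2=\sum_{k=1}^\infty (3q_2)^{k-1}(c_k)_2.
\]
Moreover, directly from its definition, $\Gamma_{q_1,q_2}=\bigl([-\tfrac{3q_1}{2},\tfrac{3q_1}{2})\cap\mathbb{Z}\bigr)\times\bigl([-\tfrac{3q_2}{2},\tfrac{3q_2}{2})\cap\mathbb{Z}\bigr)$, so the constraint $c_k\in\Gamma_{q_1,q_2}$ is precisely the conjunction of $(c_k)_1\in[-\tfrac{3q_1}{2},\tfrac{3q_2}{2})\cap\mathbb{Z}$ — wait, of $(c_k)_1\in[-\tfrac{3q_1}{2},\tfrac{3q_1}{2})\cap\mathbb{Z}$ and $(c_k)_2\in[-\tfrac{3q_2}{2},\tfrac{3q_2}{2})\cap\mathbb{Z}$.

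Next I would apply Lemma~\ref{exg} twice. Since $q_1,q_2\ge 1$, both $b=3q_1$ and $b=3q_2$ are integers $\ge 2$, so the hypotheses are met. Applying the lemma with $b=3q_1$ to $k=\omega_1$ produces a unique sequence $\{(c_k)_1\}_{k\ge1}$ with digits in $[-\tfrac{3q_1}{2},\tfrac{3q_1}{2})\cap\mathbb{Z}$, vanishing for $k\ge N_1$; applying it with $b=3q_2$ to $k=\omega_2$ produces a unique sequence $\{(c_k)_2\}_{k\ge1}$ with digits in $[-\tfrac{3q_2}{2},\tfrac{3q_2}{2})\cap\mathbb{Z}$, vanishing for $k\ge N_2$. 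Setting $c_k=\bigl((c_k)_1,(c_k)_2\bigr)^{\mathsf T}$ gives $c_k\in\Gamma_{q_1,q_2}$ for all $k$, and $c_k=(0,0)^{\mathsf T}$ for $k\ge\max\{N_1,N_2\}$, so the sum $\sum_{k\ge1}A^{k-1}c_k$ is in fact finite and equals $\omega$ by the two scalar identities above. This establishes existence.

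For uniqueness, suppose $\omega=\sum_{k\ge1}A^{k-1}c_k=\sum_{k\ge1}A^{k-1}c_k'$ with both $c_k,c_k'\in\Gamma_{q_1,q_2}$ and eventually zero. Projecting onto the first and second coordinates yields two representations of $\omega_1$ (resp.\ $\omega_2$) of the form guaranteed by Lemma~\ref{exg} with $b=3q_1$ (resp.\ $b=3q_2$); by the uniqueness part of that lemma, $(c_k)_1=(c_k')_1$ and $(c_k)_2=(c_k')_2$ for all $k$, hence $c_k=c_k'$. I do not anticipate any real obstacle here: the argument is a routine coordinatewise reduction, and the only point requiring a little care is matching the digit alphabet of Lemma~\ref{exg} — namely $[-\tfrac b2,\tfrac b2)\cap\mathbb{Z}$ — with the two factors of $\Gamma_{q_1,q_2}$, together with taking the maximum of the two thresholds to get a single index beyond which $c_k$ vanishes.
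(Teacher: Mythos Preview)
Your proposal is correct and is exactly the argument the paper has in mind: the paper simply states that the proposition ``follows immediately from Lemma~\ref{exg},'' and your coordinatewise reduction via the diagonal form of $A$ is the natural way to make that immediate implication explicit.
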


We can decompose $\Gamma_{q_1,q_2}$ into the following disjoint union.
\begin{lemma}
Write \[
\mathcal{C}_{q_1,q_2}=\left\{\begin{pmatrix}
		 0 \\
		0  \\
	\end{pmatrix},\begin{pmatrix}
		 q_1 \\
		-q_2  \\
	\end{pmatrix},\begin{pmatrix}
		 -q_1 \\
		q_2  \\
\end{pmatrix}\right\}.
\]
 Then,
\begin{equation}\label{decom}
\Gamma_{q_1,q_2}=\bigcup_{a\in\mathcal{E}_{q_1}}(a+\mathcal{C}_{q_1,q_2})\pmod{A}
\end{equation}
where
\[
\mathcal{E}_{q_1}=\left\{
\begin{pmatrix}
		a_1 \\
		a_2 \\
\end{pmatrix}\in\Gamma_{q_1,q_2}:-\frac{q_1}{2}\leq a_1<\frac{q_1}{2} \right\}.
\]
Moreover, the union in \eqref{decom} is disjoint.
\end{lemma}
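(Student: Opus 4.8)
The plan is a counting argument combined with a coordinate-by-coordinate analysis of congruences modulo $A\mathbb{Z}^2$. First I would record the relevant cardinalities: $\#\Gamma_{q_1,q_2}=9q_1q_2$, $\#\mathcal{C}_{q_1,q_2}=3$, and $\#\mathcal{E}_{q_1}=q_1\cdot 3q_2=3q_1q_2$, so that $\#\mathcal{E}_{q_1}\cdot\#\mathcal{C}_{q_1,q_2}=\#\Gamma_{q_1,q_2}$. Since $\mathcal{E}_{q_1}\subseteq\Gamma_{q_1,q_2}$ and $\Gamma_{q_1,q_2}$ is a complete residue system modulo $A$, it suffices to prove that the map $(a,c)\mapsto a+c$ from $\mathcal{E}_{q_1}\times\mathcal{C}_{q_1,q_2}$ into $\mathbb{Z}^2$ is injective modulo $A\mathbb{Z}^2$; then the $9q_1q_2$ vectors $a+c$ represent $9q_1q_2$ distinct residues, hence all residues, and the equality in \eqref{decom} together with the disjointness of the union follows automatically.

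For the injectivity, suppose $a+c\equiv a'+c'\pmod{A\mathbb{Z}^2}$ for $a,a'\in\mathcal{E}_{q_1}$ and $c,c'\in\mathcal{C}_{q_1,q_2}$. I would look at the first coordinates first. Writing $a=(a_1,a_2)^T$, $a'=(a_1',a_2')^T$ and recalling that the first coordinates of the three elements of $\mathcal{C}_{q_1,q_2}$ are $0$, $q_1$, $-q_1$, the first-coordinate congruence reads $a_1-a_1'\equiv c_1'-c_1\pmod{3q_1}$ with $c_1,c_1'\in\{0,q_1,-q_1\}$. The right-hand side is an integer multiple of $q_1$, and since $q_1\mid 3q_1$ this forces $a_1-a_1'$ itself to be a multiple of $q_1$; as $a_1,a_1'\in[-q_1/2,q_1/2)$ give $|a_1-a_1'|<q_1$, we get $a_1=a_1'$, and then $c_1\equiv c_1'\pmod{3q_1}$, which forces $c_1=c_1'$ because $0,q_1,-q_1$ are pairwise incongruent modulo $3q_1$. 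Since the first coordinate determines the element of $\mathcal{C}_{q_1,q_2}$ uniquely, we conclude $c=c'$.

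With $c=c'$ in hand, the second-coordinate congruence becomes $a_2\equiv a_2'\pmod{3q_2}$, and since $a_2,a_2'\in[-3q_2/2,3q_2/2)$ we obtain $a_2=a_2'$, hence $a=a'$. This proves the injectivity, and the lemma follows. I do not expect a genuine obstacle here; the only point requiring a little care is the first-coordinate step, where one must exploit the specific shape of $\mathcal{C}_{q_1,q_2}$ — namely that its first coordinates are exactly the three distinct cosets of $q_1\mathbb{Z}$ in $\mathbb{Z}/3q_1\mathbb{Z}$ and that each first coordinate pins down the full vector — rather than any product structure, since $\mathcal{E}_{q_1}$ already exhausts the second coordinate modulo $3q_2$ on its own.
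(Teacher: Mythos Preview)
Your proof is correct and follows essentially the same strategy as the paper's: a counting argument reducing the lemma to an injectivity statement, which is then verified by a coordinate-by-coordinate analysis modulo $3q_1$ and $3q_2$. Your organization is in fact cleaner than the paper's---the paper first rewrites the union as $\bigcup_{b\in\mathcal{C}_{q_1,q_2}}(b+\mathcal{E}_{q_1})\pmod{A}$, checks that each translate $b+\mathcal{E}_{q_1}$ has the right size, and then handles the disjointness across different $a$'s by appealing to a ``similar contradiction''; you instead prove the injectivity of $(a,c)\mapsto a+c\pmod{A}$ in one pass, which simultaneously gives both the disjointness and the equality.
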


\begin{proof} Observe that
 \[
\bigcup_{a\in\mathcal{E}_{q_1}}(a+\mathcal{C}_{q_1,q_2})\pmod{A}=\bigcup_{b\in\mathcal{C}_{q_1,q_2}}(b+\mathcal{E}_{q_1})\pmod{A}
\]
with $(b+\mathcal{E}_{q_1})\pmod{A}\subseteq\Gamma_{q_1,q_2}$ and  the union of right hand of the above equality is disjoint. For any $b\in\mathcal{C}_{q_1,q_2}$, we claim that  $b+\mathcal{E}_{q_1}\pmod{A}$ contains the same number of elements as $\mathcal{E}_{q_1}$. In fact, if $b=\begin{pmatrix}
		 0 \\
		0\\
	\end{pmatrix}$, it is trivial. We prove it for $b=\begin{pmatrix}
		 q_1 \\
		-q_2  \\
	\end{pmatrix}$ and the other case is similar. For any two distinct elements $\begin{pmatrix}
		 a_1 \\
		a_2  \\
	\end{pmatrix}$ and $\begin{pmatrix}
		 a_1'\\
		a_2'  \\
	\end{pmatrix}$ in $\mathcal{E}_{q_1}$, if $\begin{pmatrix}
		 a_1 \\
		a_2  \\
	\end{pmatrix}+\begin{pmatrix}
		 q_1 \\
		-q_2  \\
	\end{pmatrix}\pmod{A}=\begin{pmatrix}
		 a_1' \\
		a_2'  \\
	\end{pmatrix}+\begin{pmatrix}
		 q_1 \\
		-q_2  \\
	\end{pmatrix}\pmod{A}$, then $\begin{pmatrix}
		 a_1-a_1' \\
		a_2-a_2' \\
	\end{pmatrix}=\begin{pmatrix}
		 0 \\
		0  \\
	\end{pmatrix}\pmod{A}$. If $a_1\neq a_1'$, this is a contradiction since $a_1-a_1'\in(-q_1,q_1)\setminus\{0\}$; if $a_1=a_1'$, this is also a contradiction since $a_2-a_2'\in(-3q_2,3q_2)\setminus\{0\}$. Then we obtain the above claim. On the other hand, $(b+\mathcal{E}_{q_1})\pmod{A}$ are all integer vectors for any $b$, which implies that
\[
\Gamma_{q_1,q_2}=\bigcup_{b\in\mathcal{C}_{q_1,q_2}}(b+\mathcal{E}_{q_1})\pmod{A}
\]
and thus $\Gamma_{q_1,q_2}=\bigcup_{a\in\mathcal{E}_{q_1}}(a+\mathcal{C}_{q_1,q_2})\pmod{A}$. If there exist $a=\begin{pmatrix}
		 a_1 \\
		a_2  \\
	\end{pmatrix}$ and $a'=\begin{pmatrix}
		 a_1' \\
		a_2'  \\
	\end{pmatrix}$ in $\mathcal{E}_{q_1}$ such that $(a+\mathcal{C}_{q_1,q_2})\pmod{A}\cap (a'+\mathcal{C}_{q_1,q_2})\pmod{A}\neq\emptyset$, we can obtain similar contradiction as the above argument.
\end{proof}
\begin{remark}
Similarly, we can define \[
\mathcal{E}_{q_2}=\left\{\begin{pmatrix}
		a_1 \\
		a_2 \\
	\end{pmatrix}\in\Gamma_{q_1,q_2}:-\frac{q_2}{2}\leq a_2<\frac{q_2}{2} \right\}
\]
and then $\Gamma_{q_1,q_2}$ can be decomposed into the  following disjoint classes:
\begin{equation}\label{eqc2}
\Gamma_{q_1,q_2}=\bigcup_{a\in\mathcal{E}_{q_2}}(a+\mathcal{C}_{q_1,q_2})\pmod{A}.
\end{equation}
\end{remark}

Write $\Theta_3=\{-1,0,1\}$. For $n\ge 1$, denote all the words  with length $n$  by $\Theta_3^n=\{I=i_1i_2\cdots i_n: \mbox{all} ~ i_k\in\Theta_3\}$, all the finite words by $\Theta_3^*=\bigcup_{n=1}^\infty \Theta_3^n$ and all the infinite words by $\Theta_3^{\infty}=\{I=i_1i_2\cdots: \mbox{all} ~ i_k\in\Theta_3\}$. For any $I=i_1i_2\cdots i_k\in\Theta_{3}^{k},J=j_1j_2\cdots\in\Theta_{3}^{\infty}\cup\Theta_{3}^{\ast}$, let $IJ=i_1i_2\cdots i_kj_1j_2\cdots$. For $I\in\Theta_{3}^{\ast}$, we denote the infinite  word $II\cdots$ by $I^\infty$.

Now, we  give the definition of maximal tree mapping.

\begin{defn}\label{deftree}
  We say that a mapping $\tau$ from $\Theta_3^*$ to $\Gamma_{q_1,q_2}$ is a {\it maximal tree mapping} if
  \begin{enumerate}[(i)]
\item $\tau(0^ki_{k+1})=i_{k+1}\begin{pmatrix}
		q_1 \\
		-q_2 \\
	\end{pmatrix}$ for  $k\ge 0$ and $i_{k+1}\in\Theta_3$; here we define $0^0i_1=\emptyset i_1=i_1$;
  \item  For any $k\ge 1$ and $Ij\in\Theta_3^{k+1}$ with $I\ne 0^k, j\in\Theta_3$, $\tau(Ij)\equiv e_I+j\begin{pmatrix}
		q_1 \\
		-q_2 \\
	\end{pmatrix}\pmod{A}$ for some $e_I\in\mathcal{E}_{q_1}$;
  \item For  $I\in\Theta_3^*$, there exists $J\in\Theta_3^*$ such that $\tau(IJ0^n)=\textbf{0} $ for all sufficiently large $n$ .

  \end{enumerate}
    \end{defn}
\begin{remark}
If we choose the decomposition \eqref{eqc2} of $\Gamma_{q_1,q_2}$, then we require the $e_I$ in (ii) of Definition \ref{deftree} belongs to $\mathcal{E}_{q_2}$.
\end{remark}

Given a maximal tree mapping $\tau$, define
 \[\Theta_3^\tau=\{I\in\Theta_3^\infty: \text{ $\tau(I|_n)=\textbf{0}$ for sufficiently large $n$}\}.\]
It is easy to check that $\Theta_3^\tau\subseteq \Theta_3^*0^\infty$. For $I\in\Theta_3^\tau$, we define $\tau^*(I)$ by
\[
\tau^*(I)=\sum_{k=1}^\infty A^{k-1}\tau(I|_k)\in \mathbb{Z}^2.
\]

Now we can present our first result (Theorem \ref{max}), which  establishes a characterization of the maximal
orthogonal set of the measure $\mu_{A,\mathcal{D}}$ via  maximal tree mapping.

\begin{theorem}\label{thmma}
Let $\Lambda$ be a countable subset of $\mathbb{R}^2$ containing $\textbf{0}$. Then $\Lambda$ is a maximal orthogonal set of $\mu_{A,\mathcal{D}}$ if and only if there exists a maximal tree mapping $\tau$ such that $\Lambda=\tau^*(\Theta_3^\tau)$.
\end{theorem}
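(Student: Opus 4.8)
The plan is to prove the two directions of the equivalence separately, following the tree-labelling philosophy of Dutkay--Han--Sun \cite{DHS09}, but adapted to the two-dimensional self-affine setting where the digit set $\mathcal{D}$ sits in $\mathbb{Z}^2$ and the relevant zero set is governed by \eqref{eqmle33}.

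\textbf{Sufficiency.} Suppose $\tau$ is a maximal tree mapping and $\Lambda=\tau^*(\Theta_3^\tau)$. First I would show $\Lambda$ is an orthogonal set, i.e. $(\Lambda-\Lambda)\setminus\{\mathbf 0\}\subseteq\mathcal{Z}(\hat\mu_{A,\mathcal D})$. Take two distinct $I,J\in\Theta_3^\tau$ and let $k$ be the first index where $I|_k\ne J|_k$; since $\tau(I|_j)=\tau(J|_j)$ for $j<k$, the difference $\tau^*(I)-\tau^*(J)$ equals $A^{k-1}(\tau(I|_k)-\tau(J|_k))+A^k(\cdots)$. Using Definition \ref{deftree}(i)--(ii), $\tau(I|_k)$ and $\tau(J|_k)$ differ, modulo $A\mathbb{Z}^2$, by $(i_k-j_k)\binom{q_1}{-q_2}$ with $i_k\ne j_k$, so $\tau(I|_k)-\tau(J|_k)\equiv \pm\binom{q_1}{-q_2}$ or $\pm 2\binom{q_1}{-q_2}\pmod A$; hence $A^{-(k-1)}(\tau^*(I)-\tau^*(J))\in\binom{q_1}{2q_2}+A\mathbb Z^2$ or $\binom{2q_1}{4q_2}+A\mathbb Z^2$ after a sign normalization, which by \eqref{eqmle33} lies in $\mathcal Z(\hat\mu_{A,\mathcal D})$. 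I'd have to be a little careful about signs, since $\binom{q_1}{-q_2}$ and $\binom{-q_1}{q_2}$ are negatives of each other and $\hat\mu$ is not necessarily even-symmetric in the relevant component; but $m_{\mathcal D}(\xi)=0$ iff $\langle\text{(row)},\xi\rangle\in\frac13+\mathbb Z$ or $\frac23+\mathbb Z$, which is symmetric enough to absorb this. Then for maximality, one argues that no further nonzero exponential can be orthogonal to all of $E(\Lambda)$: given a putative $\gamma\notin\Lambda$ with $(\gamma-\Lambda)\setminus\{\mathbf 0\}\subseteq\mathcal Z(\hat\mu)$, expand $\gamma$ via Proposition \ref{exp} (after checking $\gamma\in\mathbb Z^2$, which follows since $\mathbf 0\in\Lambda$ forces $\gamma\in\mathcal Z(\hat\mu)\subseteq\mathbb Z^2$); the digits of $\gamma$ determine a branch in $\Theta_3^*$, and condition (iii) of the maximal tree mapping — the "stopping" requirement — forces this branch to terminate in $0^\infty$, i.e. $\gamma\in\tau^*(\Theta_3^\tau)=\Lambda$, a contradiction.

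\textbf{Necessity.} Conversely, let $\Lambda$ be a maximal orthogonal set containing $\mathbf 0$. Since $\mathbf 0\in\Lambda$, every $\lambda\in\Lambda$ satisfies $\lambda\in\mathcal Z(\hat\mu_{A,\mathcal D})$, hence $\lambda\in\bigcup_k A^{k}\mathcal Z(m_{\mathcal D})\subseteq\mathbb Z^2$ (using that $A\mathbb Z^2\subseteq\mathbb Z^2$ under the Assumption), so each $\lambda$ has a finite $A$-adic expansion $\lambda=\sum_{k\ge 1}A^{k-1}c_k$ with $c_k\in\Gamma_{q_1,q_2}$ by Proposition \ref{exp}. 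The idea is to read off the first-digit constraints from orthogonality: if $\lambda,\lambda'\in\Lambda$ have expansions agreeing up to level $k-1$ and differing at level $k$, then as above $c_k-c_k'\equiv j\binom{q_1}{-q_2}\pmod A$ for some nonzero $j\in\{-2,-1,1,2\}$; equivalently, the level-$k$ digit of each $\lambda\in\Lambda$, read modulo $A$, lies in a single coset of the form $e+\mathcal C_{q_1,q_2}$ once we fix the "history", because $\mathcal C_{q_1,q_2}=\{\mathbf 0,\binom{q_1}{-q_2},\binom{-q_1}{q_2}\}$ is exactly the set of $\binom{q_1}{-q_2}$-multiples mod $A$, and the decomposition \eqref{decom} says each $\Gamma_{q_1,q_2}$-class splits into such cosets indexed by $\mathcal E_{q_1}$. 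This lets me build $\tau$: for $I=i_1\cdots i_k\in\Theta_3^*$ that is a "prefix" of some $\lambda\in\Lambda$, set $\tau(I)$ to be the level-$k$ digit $c_k$ of $\lambda$, checking it is well-defined (independent of the choice of $\lambda$ extending $I$) precisely because of the coset structure; for $I$ not a prefix of any $\lambda$, extend arbitrarily subject to (i)--(iii), which is possible. Condition (i) comes from the fact that the $0^k$-branch corresponds to $\lambda=\mathbf 0$ and its "first deviation" digits; (ii) is the coset statement just described; (iii) encodes maximality — if some branch never returned to $\mathbf 0$, one could produce a new orthogonal exponential, contradicting maximality of $\Lambda$. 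Finally $\Lambda=\tau^*(\Theta_3^\tau)$ by construction.

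\textbf{Main obstacle.} The delicate point is the bookkeeping in the necessity direction: showing that the level-$k$ digit map is \emph{well-defined} on $\Theta_3^*$ and that the constraints force exactly the coset structure of Definition \ref{deftree}(ii), rather than something weaker. This requires combining the zero-set description \eqref{eqmle33} with the two decompositions \eqref{decom}/\eqref{eqc2} of $\Gamma_{q_1,q_2}$ and arguing that the "residual" ambiguity $e_I\in\mathcal E_{q_1}$ is genuinely unconstrained by orthogonality alone (so different maximal orthogonal sets correspond to different choices of the $e_I$'s and of the stopping pattern). A secondary subtlety is the sign/coset ambiguity $\binom{q_1}{-q_2}$ versus $\binom{-q_1}{q_2}$ and the factor-of-$2$ cases ($j=\pm 2$), which must be checked to still land inside $\mathcal Z(m_{\mathcal D})$ after passing through the appropriate power of $A$; here one uses that $2\binom{q_1}{-q_2}\equiv-\binom{q_1}{-q_2}\pmod{3\binom{q_1}{-q_2}}$-type identities mirror the fact that $\mathcal Z(m_{\mathcal D})$ has two components $\frac13\binom12+\mathbb Z^2$ and $\frac23\binom12+\mathbb Z^2$. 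The rest — orthogonality in the sufficiency direction and the "no extra frequency" maximality argument — is a routine adaptation of \cite{DHS09,ADH22}.
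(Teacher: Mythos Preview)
Your overall two-direction strategy is the same as the paper's, and the orthogonality half of the sufficiency argument is fine. But there are two genuine gaps.

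\medskip
\textbf{Gap in the maximality argument (sufficiency).} You write that for $\gamma\notin\Lambda$ orthogonal to $\Lambda$, ``the digits of $\gamma$ determine a branch in $\Theta_3^*$, and condition (iii) \dots\ forces this branch to terminate in $0^\infty$, i.e.\ $\gamma\in\tau^*(\Theta_3^\tau)$.'' This does not work. The $A$-adic digits $c_k$ of $\gamma$ lie in $\Gamma_{q_1,q_2}$, which has $9q_1q_2$ elements, not $3$; they do not a priori pick out a branch in $\Theta_3^*$. Condition (iii) says every branch of the $\Theta_3$-tree can be continued to an element of $\Theta_3^\tau$; it says nothing about an arbitrary $\Gamma_{q_1,q_2}$-digit string. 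The correct argument (as in the paper) is: let $s$ be the \emph{first} index at which $c_1\cdots c_s$ fails to match $\tau(I|_1)\cdots\tau(I|_s)$ for any $I\in\Theta_3^\tau$; pick $I_0$ matching up to level $s-1$; then for \emph{every} $i\in\Theta_3$ (using (iii) to produce witnesses $i_1\cdots i_{s-1}iJ_i0^\infty\in\Theta_3^\tau$) the orthogonality $\tau^*(\cdot)-\gamma\in\mathcal Z(\hat\mu)$ forces $\tau(i_1\cdots i_{s-1}i)-c_s\in\pm\binom{q_1}{-q_2}+A\mathbb Z^2$. Hence $\{\tau(i_1\cdots i_{s-1}i):i\in\Theta_3\}\cup\{c_s\}$ is an orthogonal set for $\delta_{A^{-1}\mathcal D}$ of cardinality $4$, contradicting $\dim L^2(\delta_{A^{-1}\mathcal D})=3$. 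Condition (iii) is used here only to guarantee the existence of those three witnesses, not to force $\gamma$ onto the tree.

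\medskip
\textbf{Gap in the necessity direction.} Your description ``the level-$k$ digit of each $\lambda$ \dots\ lies in a single coset $e+\mathcal C_{q_1,q_2}$'' is the right target, but the crucial point you skip is that this coset is \emph{fully occupied}: at every node the set $\mathcal A(I)=\{c_{n,k+1}:c_{n,j}=\tau(I|_j),\ j\le k\}$ has \emph{exactly} three elements. The upper bound $\#\mathcal A(I)\le 3$ comes from orthogonality (as you indicate), but the lower bound uses maximality in a nontrivial way: if $\#\mathcal A(\emptyset)=1$ then $\Lambda$ is actually orthogonal for the tail measure $\nu_1=\delta_{A^{-2}\mathcal D}*\cdots$, and Lemma~\ref{noss} shows such a set can never be maximal for $\mu_{A,\mathcal D}=\delta_{A^{-1}\mathcal D}*\nu_1$; ruling out $\#\mathcal A(\emptyset)=2$ is a separate small argument exhibiting an explicit extra orthogonal vector. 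Only once $\#\mathcal A(I)=3$ is established does $\tau$ become well-defined on \emph{all} of $\Theta_3^*$ without any ``arbitrary extension'', and condition (iii) then follows simply because each branch tracks an actual $\lambda_n$ whose expansion is finite. Your remark ``for $I$ not a prefix of any $\lambda$, extend arbitrarily'' is therefore both unnecessary and a symptom of the missing $\#\mathcal A(I)=3$ step: to make the induction run one also needs that each $\Lambda_i:=A^{-1}(\Lambda-\tau(i))$ is again a \emph{maximal} orthogonal set of $\mu_{A,\mathcal D}$, which is another place maximality of $\Lambda$ must be invoked explicitly.

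\medskip
In short, the architecture is right, but the two load-bearing uses of maximality --- the $3$-versus-$4$ dimension contradiction in one direction, and Lemma~\ref{noss} plus the ``exactly three digits per node'' count in the other --- are missing from your sketch, and the argument does not go through without them.
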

To prove it, we need the following simple lemma. We can write $\mu_{R,B}$  as the following infinite convolution of discrete measures
\begin{equation}\label{jj}
\begin{split}
\mu_{A,\mathcal{D}}&=\delta_{A^{-1}\mathcal{D}}\ast\delta_{A^{-2}\mathcal{D}}\ast\delta_{A^{-3}\mathcal{D}}\ast\cdots\\
&=\mu_n\ast\nu_{n},
\end{split}
\end{equation}
where
\[\mu_n=\delta_{A^{-1}\mathcal{D}}\ast\delta_{A^{-2}\mathcal{D}}\ast\cdots\ast\delta_{A^{-n}\mathcal{D}}\]
and $\nu_{n}=\delta_{A^{-(n+1)}\mathcal{D}}\ast\delta_{A^{-(n+2)}\mathcal{D}}\ast\cdots$.

\begin{lemma}\label{noss}
Suppose that $\Lambda$ is an orthogonal set of $\nu_n$. Then $\Lambda$ is an orthogonal set of $\nu_{n-1}$ but cannot be a maximal orthogonal set of $\nu_{n-1}$.
\end{lemma}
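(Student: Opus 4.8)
The plan is to analyze the two assertions separately, using the convolution structure \eqref{jj} together with the relation $\nu_{n-1} = \delta_{A^{-n}\mathcal{D}} \ast \nu_n$. First I would recall the elementary fact that if $\rho = \rho_1 \ast \rho_2$ is a convolution of probability measures, then $\widehat{\rho} = \widehat{\rho_1}\,\widehat{\rho_2}$, so $\mathcal{Z}(\widehat{\rho_1}) \cup \mathcal{Z}(\widehat{\rho_2}) \subseteq \mathcal{Z}(\widehat{\rho})$, and a countable set $\Lambda$ is an orthogonal set of $\rho$ exactly when $(\Lambda - \Lambda)\setminus\{\mathbf 0\} \subseteq \mathcal{Z}(\widehat{\rho})$. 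For the first claim, I would compute $\widehat{\nu_{n-1}}(\xi) = m_{\mathcal D}(A^{-n}\xi)\,\widehat{\nu_n}(\xi)$, so $\mathcal{Z}(\widehat{\nu_n}) \subseteq \mathcal{Z}(\widehat{\nu_{n-1}})$; hence any orthogonal set of $\nu_n$ is automatically an orthogonal set of $\nu_{n-1}$, since its difference set (minus $\mathbf 0$) lands in $\mathcal{Z}(\widehat{\nu_n}) \subseteq \mathcal{Z}(\widehat{\nu_{n-1}})$.

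For the second claim — that such a $\Lambda$ cannot be \emph{maximal} for $\nu_{n-1}$ — the idea is to produce explicitly an extra vector $\xi_0 \neq \mathbf 0$ such that $\Lambda \cup \{\mathbf 0, \xi_0\}$ (translated so that $\mathbf 0 \in \Lambda$; the containment of $\mathbf 0$ is harmless since orthogonality is translation invariant) is still an orthogonal set of $\nu_{n-1}$. The natural candidate is a vector $\xi_0$ lying in $A^n \mathcal{Z}(m_{\mathcal D})$, i.e. $\xi_0 = A^n \eta$ with $\eta \in \mathcal{Z}(m_{\mathcal D})$, chosen so that not only $\xi_0 - \mathbf 0 = \xi_0 \in \mathcal{Z}(\widehat{\nu_{n-1}})$ but also $\xi_0 - \lambda \in \mathcal{Z}(\widehat{\nu_{n-1}})$ for every $\lambda \in \Lambda$. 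Here one exploits that $\widehat{\nu_{n-1}}(\xi) = \prod_{j \geq n} m_{\mathcal D}(A^{-j}\xi)$ vanishes as soon as $m_{\mathcal D}(A^{-n}\xi) = 0$, and that $m_{\mathcal D}(A^{-n}(\xi_0 - \lambda)) = m_{\mathcal D}(\eta - A^{-n}\lambda)$; since $\Lambda \subseteq \mathbb{Z}^2$ (being an orthogonal set whose difference set sits in the lattice-structured zero set $\mathcal{Z}(\widehat{\nu_n})$, by an inductive application of \eqref{eqmle33}-type descriptions of these zero sets), and $\mathcal{Z}(m_{\mathcal D})$ is invariant under translation by $\mathbb{Z}^2$, we get $m_{\mathcal D}(\eta - A^{-n}\lambda) = 0$ provided $A^{-n}\lambda \in \mathbb{Z}^2$. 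To arrange the latter one picks $n$ implicitly large enough relative to the (at this point arbitrary, but for the application bounded) entries of $\Lambda$ — or, more cleanly, one observes that it suffices to take $\eta$ so that $\eta \notin A^{-n}\Lambda + \mathbb{Z}^2$ fails, i.e. one directly chooses $\xi_0 \in A^n\left(\tfrac13\binom{1}{2} + \mathbb{Z}^2\right)$ avoiding the countable set $\Lambda$, which is possible since that coset is infinite. Then $\mathbf 0, \xi_0 \notin \Lambda$ and every pairwise difference among $\Lambda \cup \{\xi_0\}$ lies in $\mathcal{Z}(\widehat{\nu_{n-1}})$, contradicting maximality.

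I expect the main obstacle to be the bookkeeping in the second claim: verifying that one can choose the new frequency $\xi_0$ so that \emph{all} differences $\xi_0 - \lambda$, $\lambda \in \Lambda$, are zeros of $\widehat{\nu_{n-1}}$ simultaneously. The clean way is to first establish that every orthogonal set of $\nu_n$ is contained (up to translation) in $\mathbb{Z}^2$ — indeed in a sublattice determined by $A^n$ — using the explicit form of $\mathcal{Z}(\widehat{\nu_n}) = \bigcup_{k \geq n+1} A^k \mathcal{Z}(m_{\mathcal D})$, analogous to \eqref{eqmle33}; once $\Lambda \subseteq A^n \mathbb{Z}^2 + v$ for a suitable fixed $v$, the translation-periodicity of $\mathcal{Z}(m_{\mathcal D})$ modulo $\mathbb{Z}^2$ makes the simultaneous vanishing immediate for $\xi_0 \in A^n\big(\tfrac13\binom12 + \mathbb{Z}^2\big) + v$. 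The remaining point — that such $\xi_0$ can be chosen outside the countable set $\Lambda$ — is trivial since the relevant coset is infinite. If instead one does not wish to prove the lattice containment in full, a lighter alternative is to invoke that $\nu_n$ and $\nu_{n-1}$ are themselves (rescaled) self-affine Sierpiński-type measures, apply Theorem \ref{thmma}-type characterizations, and note that a maximal tree mapping for $\nu_{n-1}$ must make a nontrivial first-level choice that no mapping ``coming from'' $\nu_n$ records; but the direct Fourier-analytic argument above is more self-contained and is the route I would write up.
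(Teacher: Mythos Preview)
Your first assertion and its proof (the inclusion $\mathcal{Z}(\widehat{\nu_n})\subseteq\mathcal{Z}(\widehat{\nu_{n-1}})$ via the factorization $\widehat{\nu_{n-1}}=\widehat{\delta_{A^{-n}\mathcal D}}\cdot\widehat{\nu_n}$) match the paper exactly.

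For the second assertion your final paragraph lands on the paper's argument, but the middle paragraph contains detours and a genuine misstep that should be cut. The misstep is the phrase ``one picks $n$ implicitly large enough'': $n$ is \emph{given} in the statement, not a parameter you may adjust. The detour is the talk of ``choosing $\xi_0$ avoiding the countable set $\Lambda$'': no avoidance is needed at all. The paper shows that \emph{every} $w\in\mathcal{Z}(\widehat{\delta_{A^{-n}\mathcal D}})=A^n\mathcal{Z}(m_{\mathcal D})$ works, and the reason is precisely the lattice containment you only reach at the end. Concretely: after translating so that $\mathbf 0\in\Lambda$, each nonzero $\lambda\in\Lambda$ lies in
\[
\mathcal{Z}(\widehat{\nu_n})=\bigcup_{k\ge n+1}A^k\mathcal{Z}(m_{\mathcal D})\subseteq A^n\mathbb{Z}^2,
\]
the last inclusion because $A\cdot\tfrac{j}{3}\binom{1}{2}=j\binom{q_1}{2q_2}\in\mathbb{Z}^2$ for $j=1,2$. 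Hence $A^{-n}\lambda\in\mathbb{Z}^2$ for every $\lambda\in\Lambda$, and by the $\mathbb{Z}^2$-periodicity of $\mathcal{Z}(m_{\mathcal D})$ one gets $m_{\mathcal D}\big(A^{-n}(w-\lambda)\big)=m_{\mathcal D}(A^{-n}w)=0$, so $w-\lambda\in A^n\mathcal{Z}(m_{\mathcal D})\subseteq\mathcal{Z}(\widehat{\nu_{n-1}})$. Finally $w\notin\Lambda$ is automatic, since $w\in A^n\mathcal{Z}(m_{\mathcal D})$ has $A^{-n}w\notin\mathbb{Z}^2$ while $A^{-n}\Lambda\subseteq\mathbb{Z}^2$; there is no countable-avoidance step to perform. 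Replace your middle paragraph with this single observation and your write-up coincides with the paper's proof.
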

\begin{proof}
Since $\hat{\nu}_{n-1}(\xi)=\hat{\delta}_{A^{-n}\mathcal{D}}(\xi)\cdot\hat{\nu}_n(\xi)$ for all $\xi\in\mathbb{R}^2$, it follows that $\mathcal{Z}(\hat{\nu}_n)\subseteq\mathcal{Z}(\hat{\nu}_{n-1})$. Then $(\Lambda-\Lambda)\setminus\{\textbf{0}\}\subseteq\mathcal{Z}(\hat{\nu}_n)\subseteq\mathcal{Z}(\hat{\nu}_{n-1})$, which implies that $\Lambda$ is an orthogonal set of $\nu_{n-1}$.

Without loss of generality, we assume that $\textbf{0}\in\Lambda$. We next show that $\Lambda\cup\{w\}$ is an orthogonal set of $\nu_{n-1}$ for any $w\in\mathcal{Z}(\hat{\delta}_{A^{-n}\mathcal{D}})$. Therefore, $\Lambda$ cannot be a maximal orthogonal set of $\nu_{n}$. To this end, let $w\in\mathcal{Z}(\hat{\delta}_{A^{-n}\mathcal{D}})$. Then we can write $w=A^n\left(\frac{1}{3}\begin{pmatrix}
		 1 \\
		2\\
	\end{pmatrix}+a\right)$ or $A^n\left(\frac{2}{3}\begin{pmatrix}
		 1 \\
		2\\
	\end{pmatrix}+b\right)$ for some $a,b\in\mathbb{Z}^2$. For any $\lambda\in \Lambda \setminus \{\textbf{0}\}$, write $\lambda=A^{n+i}\left(\frac{1}{3}\begin{pmatrix}
		 1 \\
		2\\
	\end{pmatrix}+a_1\right)$ or $A^{n+i}\left(\frac{2}{3}\begin{pmatrix}
		 1 \\
		2\\
	\end{pmatrix}+b_1\right)$ for some $i\geq1$. When $w=A^n\left(\frac{1}{3}\begin{pmatrix}
		 1 \\
		2\\
	\end{pmatrix}+a\right)$ and $\lambda=A^{n+i}\left(\frac{2}{3}\begin{pmatrix}
		 1 \\
		2\\
	\end{pmatrix}+b_1\right)$. Then
\begin{equation*}
\begin{split}
\lambda-w&=A^n\left(\frac{2}{3}A^i\begin{pmatrix}
		 1 \\
		2\\
	\end{pmatrix}+A^ib_1-\frac{1}{3}\begin{pmatrix}
		 1 \\
		2\\
	\end{pmatrix}-a\right)\\
&=A^n\left(\frac{2}{3}A^i\begin{pmatrix}
		 1 \\
		2\\
	\end{pmatrix}-\frac{1}{3}\begin{pmatrix}
		 1 \\
		2\\
	\end{pmatrix}+A^ib_1-a\right).
\end{split}
\end{equation*}
Note that $\frac{2}{3}A^i\begin{pmatrix}
		 1 \\
		2\\
	\end{pmatrix}-\frac{1}{3}\begin{pmatrix}
		 1 \\
		2\\
	\end{pmatrix}\equiv\frac{2}{3}\begin{pmatrix}
		 1 \\
		2\\
	\end{pmatrix}\pmod{\mathbb{Z}^2}$ and $A^ib_1-a\in\mathbb{Z}^2$. Therefore, $\lambda-w\in A^n\left(\frac{2}{3}\begin{pmatrix}
		 1 \\
		2\\
	\end{pmatrix}+\mathbb{Z}^2\right)\subseteq \mathcal{Z}(\hat{\nu}_{n-1})$. The proofs of the other cases are similar and hence
\[
\left((\Lambda\cup\{w\})-(\Lambda\cup\{w\})\right)\setminus\{\textbf{0}\}\subseteq \mathcal{Z}(\hat{\nu}_{n-1}),
 \]
which implies that $\Lambda\cup\{w\}$ is an orthogonal set of $\nu_{n-1}$.
\end{proof}

\proof[Proof of Theorem  \ref{thmma}]
If there exists a maximal tree mapping $\tau$ such that $\Lambda=\tau^*(\Theta_3^\tau)$, for any two distinct words $I=i_1i_2\cdots,J=j_1j_2\cdots\in\Theta_3^\tau$, let $k$ be the smallest integer such that
$I|_k\neq J|_k$. Then by (ii) of Definition  \ref{deftree}, there exists $e_{I|_{k-1}}\in\varepsilon_{q_1}$ such that
\[\tau(I|_k)=e_{I|_{k-1}}+i_k\begin{pmatrix}
		q_1 \\
		-q_2 \\
	\end{pmatrix}\pmod{A}\]
and
\[\tau(J|_k)=e_{I|_{k-1}}+j_k\begin{pmatrix}
		q_1 \\
		-q_2 \\
	\end{pmatrix}\pmod{A}.\]
Here, if $k=1$, we define $e_\emptyset=\textbf{0}$ and the symbol $\pmod{A}$ in the above equations is not needed by $(i)$ in Definition  \ref{deftree}. It follows that there exists some $w\in\mathbb{Z}^2$ such that
\[\begin{split}
\tau^\ast(I)-\tau^\ast(J)&=A^{k-1}(\tau(I|_k)-\tau(J|_k)+Aw)\\
&=A^{k-1}\left((i_k-j_k)\begin{pmatrix}
		q_1 \\
		-q_2 \\
	\end{pmatrix}\pmod{A}\right).
\end{split}\]
Since $i_k-j_k\in\{\pm1,\pm2\}$, it follows that
\[(i_k-j_k)\begin{pmatrix}
		q_1 \\
		-q_2 \\
	\end{pmatrix}=\begin{pmatrix}
		q_1 \\
		2q_2 \\
	\end{pmatrix}\pmod{A}~ \text{or} ~\begin{pmatrix}
		2q_1 \\
		4q_2 \\
	\end{pmatrix}\pmod{A}.\]
Therefore, $\tau^\ast(I)-\tau^\ast(J)\in\mathcal{Z}(\hat{\mu}_{A,\mathcal{D}})$, which implies that $\Lambda=\tau^*(\Theta_3^\tau)$ is an orthogonal set of $\mu_{A,\mathcal{D}}$.

Now, we prove the maximality by contradiction. If there exists $\gamma\notin\tau^\ast(\Theta_3^\tau)$ such that $\tau^*(\Theta_3^\tau)\cup\{\gamma\}$ is also an orthogonal set of $\mu_{A,\mathcal{D}}$, then $\gamma=\gamma-\textbf{0}\in\mathcal{Z}(\hat{\mu}_{A,\mathcal{D}})\subseteq\mathbb{Z}^2$ since $\textbf{0}=\tau^\ast(0^\infty)\in \Lambda$. By Proposition \ref{exp} we can write
\[\gamma=\sum_{k=1}^\infty A^{k-1}c_k\]
with  all $c_k\in\Gamma_{q_1,q_2}$ and $c_k=\textbf{0}$ for $k$ large enough. Then, by assumption we have
\[
c_1c_2\cdots\in\Gamma_{q_1,q_2}^\infty\setminus\{\tau(I|_1)\tau(I|_2)\cdots:I\in\Theta_3^\tau\},
\]
where $\Gamma_{q_1,q_2}^\infty:=\Gamma_{q_1,q_2}\times \Gamma_{q_1,q_2}\cdots.$
Let $s$ be the first index such that the sequence $c_1c_2\cdots c_s\notin\{\tau(I|_1)\tau(I|_2)\cdots \tau(I|_s):I\in\Theta_3^\tau\}$. Then there exists $I_0=i_1i_2\cdots\in\Theta_3^\tau$ such that
 $\tau(I_0|_i)=c_i$ for $1\leq i\leq s-1$ and
 \[\tau^\ast(I_0)-\gamma=A^{s-1}(\tau(I_0|_s)-c_s+Aw)\in\mathcal{Z}(\hat{\mu}_{A,\mathcal{D}})\]
 for some $w\in\mathbb{Z}^2$. According to the value range of $\tau(I_0|_s)$ and $c_s$, we have $A\nmid (\tau(I_0|_s)-c_s)$, i.e., there exists no integer vector $a$ such that $\tau(I_0|_s)-c_s=Aa$.
 Then $\tau(I_0|_s)-c_s+Aw\in\pm\begin{pmatrix}
		q_1 \\
		-q_2 \\
	\end{pmatrix}+A\mathbb{Z}^2$ and thus $\tau(I_0|_s)-c_s\in\pm\begin{pmatrix}
		q_1 \\
		-q_2 \\
	\end{pmatrix}+A\mathbb{Z}^2$. Moreover, for any $i\neq i_s$, by $(iii)$ in Definition \ref{deftree}, there exists $J_i\in\Theta_3^\ast$ and
 \[
 \tau^\ast(i_1i_2\cdots i_{s-1}iJ_i0^\infty)-\gamma=A^{s-1}(\tau(i_1i_2\cdots i_{s-1}i)-c_s+Aw')\in\mathcal{Z}(\hat{\mu}_{A,\mathcal{D}})
 \]
 for some $w'\in\mathbb{Z}^2$. By the similar analysis as above, we have $\tau(i_1i_2\cdots i_{s-1}i)-c_s\in\pm\begin{pmatrix}
		q_1 \\
		-q_2 \\
	\end{pmatrix}+A\mathbb{Z}^2$. On the other hand, by  (ii) in Definition \ref{deftree}, $\tau(i_1\cdots i_{s-1}i)-\tau(i_1\cdots i_{s})\in\pm\begin{pmatrix}
		q_1 \\
		-q_2 \\
	\end{pmatrix}+A\mathbb{Z}^2$ for any $i\neq i_s$. This, combined with the fact that the dimension of $L^2(\delta_{A^{-1}\mathcal{D}})$ is $3$, implies $\{\tau(i_1\cdots i_{s-1}i):i\in\Theta_3\}$ is a spectrum of $\delta_{A^{-1}\mathcal{D}}$.
Moreover, the above analysis implies that $\{\tau(i_1\cdots i_{s-1}i):i\in\Theta_3\}\cup \{c_s\}$ is an orthogonal set of $\delta_{A^{-1}\mathcal{D}}$, which contradicts the fact that $\{\tau(i_1\cdots i_{s-1}i):i\in\Theta_3\}$ is a spectrum of $\delta_{A^{-1}\mathcal{D}}$.

Conversely, suppose that $\Lambda$ is maximal orthogonal set of $\mu_{A,\mathcal{D}}$ containing zero. Then $\Lambda\setminus\{0\}\subseteq\bigcup_{j=1}^{\infty}A^j\left(\Bigg(\frac{1}{3}\begin{pmatrix}
		1 \\
		2 \\
	\end{pmatrix}+\mathbb{Z}^2\bigg)\cup\bigg(\frac{2}{3}\begin{pmatrix}
		1 \\
		2 \\
	\end{pmatrix}+\mathbb{Z}^2\Bigg)\right)\subseteq\mathbb{Z}^2$. Write $\Lambda=\{\lambda_n\}_{n=0}^{\infty}$ with $\lambda_0=\textbf{0}$. By Proposition \ref{exp} again, we can expand $\lambda_n$ in $A$-adic expansion with digits chosen from $\Gamma_{q_1,q_2}$ as follows
\[\lambda_n=\sum_{k=1}^{\infty}{A^{k-1}c_{n,k}},\]
where $c_{n,k}=\textbf{0}$ for $k$ sufficiently large.

Let $\mathcal{A}(\emptyset)=\{c_{n,1}:n\geq0\}$. We claim that $\mathcal{A}(\emptyset)=\mathcal{C}_{q_1,q_2}$. In fact, first, it is easy to see that $0\in\mathcal{A}(\emptyset)$. Then we claim that $\#\mathcal{A}(\emptyset)\geq2$. If it is not true, then $\mathcal{A}(\emptyset)=\{\textbf{0}\}$. Note that $\mu_{A,\mathbb{D}}=\delta_{A^{-1}\mathbb{D}}\ast\delta_{A^{-2}\mathbb{D}}\ast\cdots=:\delta_{A^{-1}\mathcal{D}}\ast \nu$, where $\nu=\delta_{A^{-2}\mathcal{D}}\ast\delta_{A^{-3}\mathcal{D}}\ast\cdots$. Then $\Lambda\setminus\{\textbf{0}\}\subseteq\mathcal{Z}(\hat{\nu})$, which implies that $\Lambda$ is an orthogonal set of $\nu$. By Lemma \ref{noss}, we have that $\Lambda$ cannot be a maximal orthgonal set of $\mu_{A,\mathbb{D}}$, which contradicts the assumption that $\Lambda$ is a maximal orthogonal set of $\mu_{A,\mathcal{D}}$. Then $\#\mathcal{A}(\emptyset)\geq2$.

 Next, we prove that $\#\mathcal{A}(\emptyset)\leq3$. For any $\omega_1,\omega_2\in\mathcal{A}(\emptyset)$ with $\omega_1\neq\omega_2$, there exist $m,n$ such that $w_1=c_{m,1}\neq c_{n,1}=w_2$. Since $\Lambda$ is an orthogonal set of $\mu_{A,\mathcal{D}}$, it follows that there exists $a\in\mathbb{Z}^2$ such that
\[\lambda_m-\lambda_n=c_{m,1}-c_{n,1}+Aa\in\mathcal{Z}(\hat{\mu}_{A,\mathcal{D}}).\]
By $c_{m,1},c_{n,1}\in\Gamma_{q_1,q_2}$, we have $\lambda_m-\lambda_n\in\mathcal{Z}(\hat{\delta}_{A^{-1} D})$ and thus $w_1-w_2\in\mathcal{Z}(\hat{\delta}_{A^{-1} D})$. This implies that $\mathcal{A}(\emptyset)$ is an orthogonal set of $\delta_{A^{-1}\mathcal{D}}$ and thus $\#\mathcal{A}(\emptyset)\leq3$. If $2\leq\mathcal{A}(\emptyset)<3$, write $\mathcal{A}(\emptyset)=\{0,w\}$. Then $w=w-0\in\mathcal{Z}(\hat{\delta}_{A^{-1} \mathcal{D}})$. Combined with the fact that $w\in\Gamma_{q_1,q_2}$, we have $w=\begin{pmatrix}
		q_1 \\
		-q_2 \\
	\end{pmatrix}$ or $-\begin{pmatrix}
		q_1 \\
		-q_2 \\
	\end{pmatrix}$. Since $\begin{pmatrix}
		q_1 \\
		-q_2 \\
	\end{pmatrix}-\left(-\begin{pmatrix}
		q_1 \\
		-q_2 \\
	\end{pmatrix}\right)=\begin{pmatrix}
		2q_1 \\
		-2q_2 \\
	\end{pmatrix}\equiv\begin{pmatrix}
		-q_1 \\
		q_2 \\
	\end{pmatrix}\pmod{A}$, for any $\lambda\in\Lambda$, it follows that $\lambda-(-w)=w~\text{or}~2w\pmod{A}\in\mathcal{Z}(\hat{\delta}_{A^{-1} \mathcal{D}})\subseteq\mathcal{Z}(\hat{\mu}_{A,\mathcal{D}})$   and thus $\Lambda\cup\{-w\}$ is an orthogonal set of $\mu_{A,\mathcal{D}}$. This contradicts the fact that $\Lambda$ is a maximal orthogonal set of $\mu_{A,\mathcal{D}}$. Hence, we complete the proof of the above claim.

Now, we can define the first level of maximal tree mapping. Define $\tau(i)=i\begin{pmatrix}
		q_1 \\
		-q_2 \\
	\end{pmatrix}$ for $i\in\Theta_3$. Then we can define
\[\mathcal{A}(i)=\{c_{n,2}:c_{n,1}=\tau(i),n\geq0\}.\]
Then we can write
\[\Lambda=\bigcup_{i=-1}^{1}(\tau(i)+A\Lambda_i),\]
where $\Lambda_i=A^{-1}(\Lambda-\tau(i))$. Note that $\Lambda_i$ is  a maximal orthogonal set of $\mu_{A,\mathcal{D}}$. In fact,
\begin{equation*}
\begin{split}
	 (\Lambda_{i}-\Lambda_{i})\setminus\{\mathbf{0}\}  &\subseteq \left(A^{-1}(\Lambda-\Lambda)\setminus\{\mathbf{0}\}\right) \cap \mathbb{Z}^2\\
&\subseteq \left(A^{-1}\bigcup_{k=1}^{\infty}A^k\mathcal{Z}(m_\mathcal{D}) \right)\cap \mathbb{Z}^2\\
&=\bigcup_{k=1}^{\infty}A^k\mathcal{Z}(m_\mathcal{D}).
\end{split}
	 \end{equation*}
This implies that $\Lambda_i$ is an orthogonal set of $\mu_{A,\mathcal{D}}$. Furthermore, assume that there exists $\beta\notin\Lambda_i$ such that $\Lambda_i\cup\{\beta\}$ is also an orthogonal set of $\mu_{A,\mathcal{D}}$. Let $\alpha=\tau(i)+A\beta$.
 It follows from the definition of $\Lambda_i$ that $\alpha\notin\Lambda$. For any $\lambda\in\Lambda$, write $\lambda=\tau(j)+A\beta_j$ for some $j\in\Theta$ and $\beta_j\in\Lambda_j$. If $i=j$, then $\alpha-\lambda=A(\beta-\beta_i)\in\mathcal{Z}(\hat{\mu}_{A,\mathcal{D}})$. If $i\neq j$, then $\alpha-\lambda=(\tau(i)-\tau(j))+A(\beta-\beta_j)\in\mathcal{Z}(\hat{\mu}_{A,\mathcal{D}})$. Therefore, $\hat{\mu}_{A,\mathcal{D}}(\alpha-\lambda)=0$ for any $\lambda\in\Lambda$, which contradicts the maximality of $\Lambda$.

Then we can do similar analysis to $\mathcal{A}(i)$ as $\mathcal{A}(\emptyset)$ by regarding $\Lambda_i$ as $\Lambda$. By induction, for $I=i_1i_2\cdots i_k$, we define
\[\mathcal{A}(I)=\{c_{n,k+1}:c_{n,1}=\tau(i_1),c_{n,2}=\tau(i_1i_2),\ldots,c_{n,k}=\tau(I),n\geq0\}.\]
Note that $0$ may be not in $\mathcal{A}(I)$ and similarly, we have $\mathcal{A}(I)=\mathcal{C}_{q_1,q_2}+a\pmod{A}$ for some $a\in\varepsilon_{q_1}$.  Define $e_I=a$ and $\tau(Ii)=a+i\begin{pmatrix}
		q_1 \\
		-q_2 \\
	\end{pmatrix}\pmod{A}$ for $i\in\Theta_3$.

Next, we show that $\tau$ is a maximal tree mapping from $\Theta_3^*$ to $\Gamma_{q_1,q_2}$. By the above construction, conditions $(i)$ and $(ii)$ in Definition \ref{deftree} are satisfied trivially. Finally, we check  the condition $(iii)$. For any $I=i_1i_2\cdots i_k\in\Theta_3^k$, choose $\lambda_n$ with
\[\lambda_n=\sum_{j=1}^{\infty}A^{j-1}c_{n,j}\]
such that
\[c_{n,1}=\tau(i_1),c_{n,2}=\tau(i_1i_2),\ldots,c_{n,k}=\tau(I).\]
Since $c_{n,j}$ equals $\textbf{0}$ for all sufficiently large $j$, we continue the construction by following the digit expansion of $\lambda_n$. This implies that $\tau$ satisfies $(iii)$ in the definition of the maximal tree mapping.
\qed

\section{Regular spectra of $\mu_{A,\mathcal{D}}$}\label{rege}
In this section, we restrict our attention to a class of particular maximal tree mappings, which can produce a class of spectra for $\mu_{A,\mathcal{D}}$ under some conditions.
\begin{defn}\label{remal}
 We say that a mapping $\tau$ from $\Theta_3^*$ to $\Gamma_{q_1,q_2}$ is a {\it regular mapping} if
\begin{enumerate}[(i)]
\item  $\tau(0^ki_{k+1})=i_{k+1}\begin{pmatrix}
		q_1 \\
		-q_2 \\
	\end{pmatrix}$ for  $k\ge 0$ and $i_{k+1}\in\Theta_3$;
  \item  For any $k\ge 1$ and  $Ij\in\Theta_3^{k+1}$ with  $I\ne 0^k, j\in\Theta_3$, $\tau(Ij)\equiv e_I+j\begin{pmatrix}
		q_1 \\
		-q_2 \\
	\end{pmatrix}\pmod{A}$ for some $e_I\in\mathcal{E}_{q_1}$;
\item  For any $I\in\Theta_3^*$ and sufficiently large $n$,  $\tau(I0^n)=\textbf{0}$.
\end{enumerate}
\end{defn}

\begin{remark}
Regular mapping can give a natural ordering of the maximal orthogonal set. In regular case,
\[\Theta_3^\tau=\{I0^\infty:I\in \Theta_3^\ast\}\]
and the set $\tau^*(\Theta_3^\tau)$ can be enumerated as follows. For $I=i_1i_2\cdots i_n\in\Theta_3^n\setminus\Theta_3^{n-1}0$, there exists a unique integer $k_I$ such that
\[
k_I=i_1+i_2 3+\cdots+i_n 3^{n-1}.
\]
On the other hand, it is easy to check that
\[
\mathbb{Z}\setminus\{0\}=\bigcup_{n=1}^\infty\bigcup_{I\in\Theta_3^n\setminus\Theta_3^{n-1}0} k_I.
\]
Let $\lambda_0=\textbf{0}$ and for $k\in \mathbb{Z}\setminus\{0\}$, define $\lambda_k=\tau^*(i_1i_2\cdots i_n0^\infty)$ if
\begin{equation}\label{eqwwd}
k=i_1+i_2 3+\cdots+i_n 3^{n-1}\quad \text{with $i_j\in\Theta_3$ and $i_n\neq0$}.
\end{equation}
Then $\tau^*(\Theta_3^\tau)=\{\lambda_k\}_{k\in\mathbb{Z}}$.
\end{remark}

For  $k\ge 1$, define \[\ell_k=\#\{i:\tau(I0^i)\neq \textbf{0},i\geq1\},\]
where $I=i_1i_2\cdots i_n$ is defined by \eqref{eqwwd}. It follows from  Theorem \ref{max} that $\tau^*(\Theta_3^\tau)$ is a maximal orthogonal set of $\mu_{A,\mathcal{D}}$. Under the uniformly bounded condition of $\ell_k$, we have the following (Theorem \ref{ppd})

\begin{theorem}\label{ppd112}
Let $\tau$ be a regular mapping and $\Lambda(\tau)=\tau^*(\Theta_3^\tau)$. If
\begin{equation}\label{condwe}
\ell_{\max}:=\max_{k\geq 1}\{\ell_k\}<\infty,
\end{equation}
then $\Lambda(\tau)$ is a spectrum of $\mu_{A,\mathcal{D}}$.
\end{theorem}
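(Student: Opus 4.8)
The plan is to verify the Jorgensen--Pedersen criterion. By Theorem~\ref{max} the set $\Lambda(\tau)$ is a maximal orthogonal set of $\mu_{A,\mathcal D}$, so $E(\Lambda(\tau))$ is an orthonormal system in $L^2(\mu_{A,\mathcal D})$ and is an orthonormal basis if and only if
\[
Q(\xi):=\sum_{\lambda\in\Lambda(\tau)}\bigl|\widehat\mu_{A,\mathcal D}(\xi+\lambda)\bigr|^{2}=1\qquad\text{for all }\xi\in\mathbb R^{2}.
\]
Bessel's inequality already gives $Q\le 1$, and $Q(\mathbf 0)=1$ since $\widehat\mu_{A,\mathcal D}(\mathbf 0)=1$ and $\widehat\mu_{A,\mathcal D}$ vanishes on $(\Lambda(\tau)-\Lambda(\tau))\setminus\{\mathbf 0\}$, so everything reduces to proving $Q(\xi)\ge 1$ everywhere. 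First I would record that $Q$ is continuous: under \eqref{condwe} each $\lambda_k$ lies within a bounded distance (governed by $\ell_{\max}$) of a canonical $A$-adic point obtained by deleting the length-$\le\ell_{\max}$ transient, which pins down $\#(\Lambda(\tau)\cap B(x,h))$ and lets one control the tails of the defining series locally uniformly.

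For the lower bound I would unfold the tree. Writing $\mu_{A,\mathcal D}=\mu_n\ast\nu_n$ as in \eqref{jj} and decomposing $\Lambda(\tau)=\bigcup_{I\in\Theta_3^{n}}\bigl(\nu_I+A^{n}\Lambda^{(I)}\bigr)$, where $\nu_I=\sum_{k=1}^{n}A^{k-1}\tau(I|_k)$ and $\Lambda^{(I)}=\tau_I^{*}(\Theta_3^{\tau_I})$ is the maximal orthogonal set of the shifted mapping $\tau_I(\cdot)=\tau(I\,\cdot)$, the $\mathbb Z^{2}$-periodicity of $m_{\mathcal D}$ together with $\widehat\mu_{A,\mathcal D}(\eta)=\widehat\mu_n(\eta)\,\widehat\mu_{A,\mathcal D}(A^{-n}\eta)$ gives the exact identity
\[
Q(\xi)=\sum_{I\in\Theta_3^{n}}\bigl|\widehat\mu_n(\xi+\nu_I)\bigr|^{2}\,Q^{(I)}\!\bigl(A^{-n}(\xi+\nu_I)\bigr),
\]
with $Q^{(I)}$ the analogue of $Q$ for $\Lambda^{(I)}$. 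Because (i)--(ii) of Definition~\ref{deftree} make $\{\tau(Ii):i\in\Theta_3\}$ a spectrum of $\delta_{A^{-1}\mathcal D}$ at every node, the set $\{\nu_I:I\in\Theta_3^{n}\}$ is an orthogonal set of $\mu_n$ of cardinality $3^{n}=\dim L^2(\mu_n)$, hence a spectrum, so $\sum_{I\in\Theta_3^{n}}|\widehat\mu_n(\xi+\nu_I)|^{2}=1$; thus the identity exhibits $1-Q(\xi)$ as a convex combination of the ``subtree defects'' $1-Q^{(I)}(\,\cdot\,)\in[0,1]$.

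The heart of the matter is to kill these defects as $n\to\infty$, and here \eqref{condwe} is essential. Since $\ell_{\max}<\infty$, for every node $I$ and finite word $J'$ one has $\tau(IJ'0^{i})=\mathbf 0$ for $i>\ell_{\max}$, so $\tau_I^{*}(J'0^{\infty})$ is a point of $\Lambda^{(I)}$ of norm bounded in terms of $|J'|$ only, and the corresponding evaluation point $A^{-n}\bigl(\xi+\nu_{IJ'0^{\,n-|J'|}}\bigr)$ tends to $\mathbf 0$ as $n\to\infty$. Near $\mathbf 0$ every subtree function is bounded below: $Q^{(K)}(\eta)\ge|\widehat\mu_{A,\mathcal D}(\eta)|^{2}\ge\beta>0$ for $\|\eta\|\le\delta_0$, uniformly in $K$ (as $\widehat\mu_{A,\mathcal D}$ is continuous with $\widehat\mu_{A,\mathcal D}(\mathbf 0)=1$), and, crucially, $Q^{(K)}(\mathbf 0)=1$ for every subtree $K$, which is the seed for the propagation. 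Combining these facts with the Parseval identities for the $\mu_m$-spectra $\{\tau_I^{*}(J'0^{\infty}):J'\in\Theta_3^{m}\}$, which at each point single out a node carrying a nonvanishing fraction of the total weight $1$ whose evaluation point is driven towards $\mathbf 0$, one sets up a self-improving estimate for $\sup_I\sup_{\|\eta\|\le R}\bigl(1-Q^{(I)}(\eta)\bigr)$ that propagates the vanishing of the defect at $\mathbf 0$ to all of $\mathbb R^{2}$, whence $Q\equiv 1$.

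The hard part will be exactly this transfer-operator step. Since $\widehat\mu_{A,\mathcal D}$ neither decays nor is bounded below and carries the rich zero set $\bigcup_{k\ge1}A^{k}\mathcal Z(m_{\mathcal D})$, the delicate point is to guarantee that the weights $|\widehat\mu_n(\xi+\nu_I)|^{2}$ attached to the ``good'' nodes do not themselves collapse to $0$ in the limit; the uniform return time furnished by $\ell_{\max}<\infty$ (equivalently, that only finitely many subtree shapes occur up to the reset) is precisely what forces a genuine, say geometric, decay of the defect under one round of the recursion and thereby closes the argument.
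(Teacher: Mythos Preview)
Your setup through the recursive identity is correct and matches the paper: the Jorgensen--Pedersen criterion, the convolution factorisation $\widehat\mu=\widehat\mu_n\,\widehat\nu_n$, the $A^n\mathbb Z^2$-periodicity of $\widehat\mu_n$, and the fact that $\{\nu_I:I\in\Theta_3^n\}$ is a spectrum of $\mu_n$ (this is the paper's Lemma~\ref{lemss}) are all used in the paper as well.

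The genuine gap is your seed step. You assert $Q^{(K)}(\eta)\ge|\widehat\mu_{A,\mathcal D}(\eta)|^2$ and ``crucially, $Q^{(K)}(\mathbf 0)=1$ for every subtree $K$.'' Both statements presuppose $\mathbf 0\in\Lambda^{(K)}$, but in general $\tau_K^*(0^\infty)=\sum_{i\ge1}A^{i-1}\tau(K0^i)\ne\mathbf 0$: condition~(iii) of Definition~\ref{remal} only says $\tau(K0^i)=\mathbf 0$ for \emph{large} $i$, not for all $i$. In fact $Q^{(K)}(\mathbf 0)=1$ is exactly the statement that the constant function lies in the closed span of $E(\Lambda^{(K)})$, i.e.\ that the subtree set is itself a spectrum---so invoking it is circular. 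Without that seed, your transfer operator gives only $1-Q(\xi)\le\sup_I\bigl(1-Q^{(I)}(\eta_I)\bigr)$, which does not self-improve.

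The paper avoids this circularity by \emph{not} working with the subtree $Q$-functions at all. Instead it proves a single pointwise tail bound (Lemma~\ref{lemssp}): for $\xi$ in a fixed box near $\mathbf 0$ and every $\lambda\in\Lambda_{\alpha_{n+s}}\setminus\Lambda_{\alpha_n}$ one has $|\widehat\nu_{n+s}(\xi+\lambda)|^2\ge c>0$, obtained by writing $A^{-(n+s)}(\xi+\lambda_k)=\xi_0+\sum_j A^{i_j-1}\tau(I0^{n_{i_j}})$ with at most $\ell_{\max}$ nonzero terms, peeling them off one by one via $\widehat\mu=\widehat\mu_m\cdot(\widehat\mu\circ A^{-m})$, and using that each intermediate argument stays in the compact zero-free region $\{|x^{(1)}|\le q_1,\ |x^{(2)}|\le q_2\}$. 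This is purely an estimate on $\widehat\mu_{A,\mathcal D}$ and uses no spectral property of any subtree. Feeding it into $Q_{n+s}(\xi)\ge Q_n(\xi)+c\bigl(1-\sum_{\lambda\in\Lambda_{\alpha_n}}|\widehat\mu_{n+s}(\xi+\lambda)|^2\bigr)$, letting $s\to\infty$ and then $n\to\infty$, gives $Q\ge1$ on the box; the extension to all of $\mathbb R^2$ uses that $Q$ is entire (Theorem~\ref{thjp}(i)), which is cleaner than the continuity argument you sketch.

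Your approach is repairable: replacing the false seed by $Q^{(K)}(\eta)\ge|\widehat\mu_{A,\mathcal D}(\eta+\gamma_0^{(K)})|^2$ with $\gamma_0^{(K)}=\tau_K^*(0^\infty)$, whose norm is controlled by $\ell_{\max}$, one is led to bound $\inf|\widehat\mu_{A,\mathcal D}(A^{-n}(\xi+\nu_I)+\gamma_0^{(I)})|=\inf|\widehat\nu_n(\xi+\tau^*(I0^\infty))|$ from below---and that is precisely Lemma~\ref{lemssp}. So once the circularity is removed, the substantive work in both routes is the same tail estimate.
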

\noindent For simplicity, if $\ell_{\max}<\infty$, we call $\Lambda(\tau)$ a {\it regular spectrum} of $\mu_{A,\mathcal{D}}$.

\begin{example}
For any $I=i_1i_2\cdots i_k\in\Theta_3^k,k\geq 1$, let $\tau(I)=i_k\begin{pmatrix}
		q_1 \\
		-q_2 \\
	\end{pmatrix}$. Then $\tau$ is a regular mapping from $\Theta_3^\ast$ to $\Gamma_{q_1,q_2}$ and
\begin{equation*}
\text{$\Lambda(\tau)=\bigcup_{k=1}^{\infty}\Lambda_k$,~~ where $\Lambda_k=\mathcal{L}+A \mathcal{L}+\cdots+A^{k-1}\mathcal{L}$}
\end{equation*}
and
\begin{equation}\label{do}
\mathcal{L}=\left\{\begin{pmatrix}
		 0 \\
		0  \\
	\end{pmatrix},\begin{pmatrix}
		 q_1 \\
		-q_2  \\
	\end{pmatrix},\begin{pmatrix}
		 -q_1 \\
		q_2  \\
	\end{pmatrix}\right\}.
\end{equation}
By Theorem \ref{ppd112}, we have $\Lambda(\tau)$ is a spectrum of $\mu_{A,\mathcal{D}}$. This is also proved by Li \cite{JLL10}.
\end{example}

\proof[Proof of Theorem \ref{ppd112}] We will prove the theorem by trying to check the following Jorgensen and Pederson's criteria:

\begin{theorem}[\cite{JP98}]\label{thjp}
 Let $\mu$ be a Borel probability measure with compact support in $\mathbb{R}^2$, and let $\Lambda\subseteq \mathbb{R}^2$ be a countable subset. Then
 \begin{enumerate}
 \item [\textup{(i)}]  $\Lambda$
    is an orthonormal set of $\mu$ if and only if $Q_\Lambda(\xi):=\sum_{\lambda\in\Lambda}|\hat{\mu}(\xi+\lambda)|^2\le 1$
    for $\xi\in\mathbb{R}^2$. In this case $Q_\Lambda(z)$ is an entire function in $\mathbb{C}^2$;
 \item [\textup{(ii)}] $\Lambda$ is a spectrum of $\mu$ if and only if $Q_\Lambda(\xi)\equiv
    1$ for $\xi\in\mathbb{R}^2$.
 \end{enumerate}
 \end{theorem}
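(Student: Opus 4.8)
The plan is to reduce both equivalences to elementary Hilbert-space facts about the unit vectors $e_\xi(x)=e^{-2\pi i\langle \xi,x\rangle}$, all of which lie in $L^2(\mu)$ because $\mu$ is finite. Writing $\langle f,g\rangle=\int f\bar g\,\mathrm d\mu$, the two basic identities are
\[
\|e_\xi\|_{L^2(\mu)}^2=\hat\mu(\mathbf 0)=1,\qquad \langle e_\lambda,e_{\lambda'}\rangle=\hat\mu(\lambda-\lambda'),
\]
so the exponentials automatically have norm one and orthonormality of $E(\Lambda)$ is the same as orthogonality. The engine of the whole argument is the master identity
\[
Q_\Lambda(\xi)=\sum_{\lambda\in\Lambda}\bigl|\langle e_{-\xi},e_\lambda\rangle\bigr|^2,\qquad \xi\in\mathbb R^2,
\]
obtained from $\langle e_{-\xi},e_\lambda\rangle=\hat\mu(-\xi-\lambda)$ and $\overline{\hat\mu(\eta)}=\hat\mu(-\eta)$, which give $|\langle e_{-\xi},e_\lambda\rangle|^2=|\hat\mu(\xi+\lambda)|^2$. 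Thus $Q_\Lambda(\xi)$ is exactly the square-sum of the Fourier coefficients of the unit vector $e_{-\xi}$ against the system $E(\Lambda)$, and as $\xi$ runs over $\mathbb R^2$ so does $-\xi$.

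For part (i), if $E(\Lambda)$ is orthonormal then Bessel's inequality applied to $f=e_{-\xi}$ yields $Q_\Lambda(\xi)\le\|e_{-\xi}\|^2=1$. Conversely, if $Q_\Lambda\le 1$ everywhere, I specialize the master identity at $\xi=-\lambda_0$ for a fixed $\lambda_0\in\Lambda$: the $\lambda=\lambda_0$ term equals $|\hat\mu(\mathbf 0)|^2=1$, so the bound forces every other term $|\langle e_{\lambda_0},e_\lambda\rangle|^2$ to vanish, giving orthogonality; since $\lambda_0$ is arbitrary, $E(\Lambda)$ is orthonormal. The assertion that $Q_\Lambda$ extends to an entire function on $\mathbb C^2$ I would derive from Paley–Wiener: compactness of $\mathrm{supp}\,\mu$ makes $\hat\mu$ entire of exponential type, and writing $|\hat\mu(\xi+\lambda)|^2=\hat\mu(\xi+\lambda)\,\hat\mu(-\xi-\lambda)$ on the real locus exhibits each summand as the restriction of an entire function; the bound $Q_\Lambda\le 1$ together with the exponential-type estimates yields local uniform convergence of the series, so the limit is entire.

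For part (ii), the easy direction is Parseval: if $E(\Lambda)$ is an orthonormal basis then $\sum_\lambda|\langle f,e_\lambda\rangle|^2=\|f\|^2$ for all $f$, and taking $f=e_{-\xi}$ gives $Q_\Lambda(\xi)\equiv 1$. The substantive direction assumes $Q_\Lambda\equiv 1$. By part (i) the system is already orthonormal, so only completeness remains. Let $M$ be the closed linear span of $E(\Lambda)$ and $P_M$ the orthogonal projection onto $M$. Since $E(\Lambda)$ is an orthonormal basis of $M$, Parseval inside $M$ gives
\[
\|P_M e_{-\xi}\|^2=\sum_{\lambda\in\Lambda}\bigl|\langle e_{-\xi},e_\lambda\rangle\bigr|^2=Q_\Lambda(\xi)=1=\|e_{-\xi}\|^2,
\]
whence $P_M e_{-\xi}=e_{-\xi}$, i.e. $e_{-\xi}\in M$ for every $\xi\in\mathbb R^2$.

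The final step is to conclude $M=L^2(\mu)$ from the fact that $M$ contains every exponential, and here compact support is essential. The algebra generated by $\{e_\xi:\xi\in\mathbb R^2\}$, restricted to the compact set $K=\mathrm{supp}\,\mu$, is closed under conjugation (as $\overline{e_\xi}=e_{-\xi}$), separates points, and contains the constants, so the complex Stone–Weierstrass theorem makes it uniformly dense in $C(K)$; since $\mu$ is a finite Radon measure, $C(K)$ is dense in $L^2(\mu)$ and uniform convergence implies $L^2(\mu)$-convergence, so finite combinations of exponentials are dense in $L^2(\mu)$. As $M$ is closed and contains all such combinations, $M=L^2(\mu)$, so $E(\Lambda)$ is an orthonormal basis. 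I expect this density argument — converting the pointwise normalization $Q_\Lambda\equiv 1$ into completeness via the projection identity and Stone–Weierstrass — to be the only nontrivial point, the remaining steps being routine Bessel/Parseval computations.
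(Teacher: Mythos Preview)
The paper does not give its own proof of this statement: it is quoted verbatim as a known criterion from Jorgensen and Pedersen \cite{JP98} and used as a black box in the proof of Theorem~\ref{ppd112}. Your argument is correct and is essentially the standard proof of this classical criterion --- the reduction to Bessel/Parseval via the identity $Q_\Lambda(\xi)=\sum_{\lambda\in\Lambda}|\langle e_{-\xi},e_\lambda\rangle|^2$, the specialization $\xi=-\lambda_0$ for the converse in (i), and the projection-plus-Stone--Weierstrass step for completeness in (ii) are exactly how the result is established in the literature.
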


    For simplicity, we write $\mu:=\mu_{A,\mathcal{D}}$. For $n\ge 1$, let $\alpha_n=\frac{3^n-1}{2}$. Then each $\alpha_n$ is an integer.  Define
\[
Q_n(\xi)=\sum_{\lambda\in \Lambda_{\alpha_n}}\left|\hat{\mu}(\xi+\lambda)\right|^2, ~~Q_{\Lambda(\tau)}(\xi)=\sum_{\lambda\in \Lambda(\tau)}\left|\hat{\mu}(\xi+\lambda)\right|^2,
\]
where $\Lambda_{\alpha_n}=\{\lambda_k\}_{k=-\alpha_n}^{\alpha_n}$ and $\Lambda(\tau)=\tau^\ast(\Theta_3^\tau)=\{\lambda_k\}_{k\in \mathbb{Z}}$. Then, for any $n,s\ge 1$, by \eqref{jj} we have the following indentity:
\begin{equation}\label{fdgj}
\begin{split}
Q_{n+s}(\xi)&=Q_n(\xi)+\sum_{\lambda\in \Lambda_{\alpha_{n+s}}\setminus \Lambda_{\alpha_n}}\left|\hat{\mu}(\xi+\lambda)\right|^2\\
            &=Q_n(\xi)+\sum_{\lambda\in \Lambda_{\alpha_{n+s}}\setminus \Lambda_{\alpha_n}}|\hat{\mu}_{n+s}(\xi+\lambda)|^2|\hat{\nu}_{n+s}(\xi+\lambda)|^2.
\end{split}
\end{equation}

Since $Q_\Lambda$ is an entire function by Theorem \ref{thmma} and $(ii)$ in Theorem \ref{thjp}, we just need to consider some $\xi$ around $\textbf{0}$.

\begin{lemma}\label{lemss}
For $n\ge 1$, the measure $\mu_n$ is a spectral measure with a spectrum $\Lambda_{\alpha_n}=\{\lambda_k\}_{k=-\alpha_n}^{\alpha_n}$.
\end{lemma}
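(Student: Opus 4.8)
The plan is to establish that $\Lambda_{\alpha_n}=\{\lambda_k\}_{k=-\alpha_n}^{\alpha_n}$ is a spectrum of the finite convolution $\mu_n=\delta_{A^{-1}\mathcal{D}}\ast\cdots\ast\delta_{A^{-n}\mathcal{D}}$ by induction on $n$, exploiting the fact that $\mu_n$ is a discrete measure supported on the finite set $A^{-1}\mathcal{D}+\cdots+A^{-n}\mathcal{D}$, which has exactly $3^n$ points (the sumset is direct because the digits live at different $A$-adic scales). Since $\dim L^2(\mu_n)=3^n=2\alpha_n+1=\#\Lambda_{\alpha_n}$, it suffices to show that $\Lambda_{\alpha_n}$ is an orthogonal set of $\mu_n$; orthogonality plus the correct cardinality forces a basis. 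For the base case $n=1$, $\Lambda_{\alpha_1}=\{\lambda_{-1},\lambda_0,\lambda_1\}=\{-\binom{q_1}{-q_2},\mathbf 0,\binom{q_1}{-q_2}\}=\mathcal{L}$, and one checks directly that the three pairwise differences lie in $A\,\mathcal{Z}(m_{\mathcal{D}})=\mathcal{Z}(\hat\delta_{A^{-1}\mathcal{D}})$, so $\mathcal{L}$ is a spectrum of $\delta_{A^{-1}\mathcal{D}}$.

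For the inductive step, I would use the natural factorization $\mu_{n}=\delta_{A^{-1}\mathcal{D}}\ast A^{-1}\mu_{n-1}'$ where $A^{-1}\mu_{n-1}'=\delta_{A^{-2}\mathcal{D}}\ast\cdots\ast\delta_{A^{-n}\mathcal{D}}$ is the pushforward of $\mu_{n-1}$ under $x\mapsto A^{-1}x$, together with the structural decomposition of $\Lambda_{\alpha_n}$ coming from the regular mapping: every $k$ with $|k|\le\alpha_n$ has a unique expansion $k=i_1+i_2 3+\cdots+i_n3^{n-1}$ with $i_j\in\Theta_3$, and correspondingly $\lambda_k=\tau(i_1)+A\lambda'_{k'}$ where $k'=i_2+i_3 3+\cdots+i_n 3^{n-2}$ indexes the analogous spectrum at level $n-1$ for the mapping shifted by one coordinate. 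This gives the partition $\Lambda_{\alpha_n}=\bigcup_{i\in\Theta_3}\big(\tau(i)+A\,\Lambda^{(i)}_{\alpha_{n-1}}\big)$ where each $\Lambda^{(i)}_{\alpha_{n-1}}$ is (by the induction hypothesis applied to the appropriate sub-mapping) a spectrum of $\mu_{n-1}$, hence $A\Lambda^{(i)}_{\alpha_{n-1}}$ is a spectrum of $A^{-1}\mu_{n-1}'$ because $\widehat{A^{-1}\mu_{n-1}'}(\xi)=\hat\mu_{n-1}(A^{-1}\xi)$. A standard lemma on spectra of convolutions $\nu\ast\rho$ (if $\{\tau(i)\}$ is a spectrum of $\nu$, the sets $\tau(i)+$ (spectrum of $\rho$) are mutually orthogonal and their union is a spectrum of $\nu\ast\rho$, provided the cross-orthogonality holds) then yields that $\Lambda_{\alpha_n}$ is orthogonal for $\mu_n$; the cross terms $\tau(i)-\tau(j)+A(\gamma-\gamma')$ with $i\ne j$ land in $\mathcal{Z}(\hat\delta_{A^{-1}\mathcal{D}})\subseteq\mathcal{Z}(\hat\mu_n)$ exactly as in the proof of Theorem~\ref{thmma}, while the $i=j$ terms are handled by the $\mu_{n-1}$ hypothesis. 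Combined with the cardinality count, $\Lambda_{\alpha_n}$ is a spectrum.

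The main obstacle I anticipate is bookkeeping rather than conceptual: one must verify carefully that the indexing set $\{-\alpha_n,\dots,\alpha_n\}$ matches up cleanly with the recursive block decomposition induced by base-$3$ expansions with digits in $\{-1,0,1\}$, and that the sub-mappings obtained by ``peeling off the first symbol'' are genuinely regular mappings of the same type (so the induction hypothesis applies to them). One also needs that the three translates $A^{-1}\mathcal{D}+(\text{support of }A^{-1}\mu_{n-1}')$ really are disjoint so that $\mu_n$ is a uniform probability measure on $3^n$ points and $\dim L^2(\mu_n)=3^n$; this follows from the $A$-adic non-overlapping of digit sets but should be stated explicitly. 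Once these structural facts are in place, the orthogonality computation is exactly the finite-level analogue of the argument already carried out in the proof of Theorem~\ref{thmma}, and the dimension count closes the argument.
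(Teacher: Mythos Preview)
Your proposal is correct in spirit but takes a more circuitous route than the paper. The paper's proof is a direct two-line verification: for any distinct $k,k'\in\{-\alpha_n,\dots,\alpha_n\}$ with associated words $I,I'\in\Theta_3^n$, one locates the \emph{first} index $s\le n$ at which $I|_s\neq I'|_s$, writes
\[
\lambda_k-\lambda_{k'}=A^{s-1}\bigl(\tau(I|_s)-\tau(I'|_s)+AM\bigr)
\]
for some $M\in\mathbb{Z}^2$, and observes from property~(ii) of the regular mapping that the bracket lies in $\pm\binom{q_1}{-q_2}+A\mathbb{Z}^2$, hence $\lambda_k-\lambda_{k'}\in\mathcal{Z}(\hat\mu_s)\subseteq\mathcal{Z}(\hat\mu_n)$. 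Orthogonality plus the dimension count $\dim L^2(\mu_n)=3^n=\#\Lambda_{\alpha_n}$ finishes. No induction, no decomposition of $\mu_n$ or $\Lambda_{\alpha_n}$ into pieces.

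Your inductive scheme works, but the obstacle you flag is real and not merely cosmetic: after peeling off a first symbol $i_1\neq 0$, the resulting sub-mapping $J\mapsto\tau(i_1J)$ generally \emph{fails} condition~(i) of Definition~\ref{remal} (since $\tau(i_10^kj)$ need only satisfy the congruence in~(ii), with a possibly nonzero $e_{i_10^k}$), so the induction hypothesis as stated does not apply to it. You would need to strengthen the hypothesis to mappings satisfying only~(ii) and~(iii), or argue orthogonality of $\Lambda^{(i)}_{\alpha_{n-1}}$ for $\mu_{n-1}$ directly---and at that point you are effectively redoing the paper's ``first disagreement'' computation anyway. The paper's approach buys simplicity by bypassing this entirely.
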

\begin{proof}
For any two distinct integers $k,k'$ with $-\alpha_n\leq k,k'\leq \alpha_n$, there exist two distinct words $I,I'\in\Theta_3^n$ such that
\[\lambda_k=\tau^\ast(I0^\infty),~\lambda_{k'}=\tau^\ast(I'0^\infty).\]
Let $s~(\leq n)$ be the first index such that $I|_s\neq I'|_s$. Then there exists integer $M$ such that
\[
\lambda_k-\lambda_{k'}=A^{s-1}(\tau(I|_s)-\tau(I'|_s)+AM).
\]
By the definition of the regular mapping $\tau$, we have that $\lambda_k-\lambda_{k'}\in\mathcal{Z}(\hat{\mu}_s)\subseteq \mathcal{Z}(\hat{\mu}_n)$. This implies that $\Lambda_{\alpha_n}$ is an orthogonal set for $\mu_n$. On the other hand, the dimension of $L^2(\mu_n)$ is $3^n=\#\Lambda_{\alpha_n}$. So, we complete the proof.
\end{proof}

\begin{lemma}\label{lemssp}
    Suppose $\ell_{\max}<\infty$. Let  $\xi=\begin{pmatrix}
		\xi^{(1)} \\
		\xi^{(2)} \\
	\end{pmatrix}\in \mathbb{R}^2$ with $|\xi^{(1)}|\leq \frac{3q_1}{2(3q_1-1)}$ and $|\xi^{(2)}|\leq \frac{3q_2}{2(3q_2-1)}$. Then there exists $c>0$ such that
    \begin{equation*}
    \inf_{\lambda\in\Lambda_{\alpha_{n+s}}\setminus \Lambda_{\alpha_n}}|\hat{\nu}_{n+s}(\xi+\lambda)|^2\geq c>0.
    \end{equation*}
\end{lemma}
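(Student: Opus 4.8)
The plan is to use the factorization \eqref{jj}, which gives $\hat{\nu}_{n+s}(\xi+\lambda)=\prod_{j=n+s+1}^{\infty}m_{\mathcal{D}}(A^{-j}(\xi+\lambda))$, and to control each factor after reducing its argument modulo $\mathbb{Z}^{2}$. Write $\lambda=\tau^{\ast}(I0^{\infty})$, where $I=i_{1}\cdots i_{N}$ is the minimal word given by \eqref{eqwwd}; since $\lambda\in\Lambda_{\alpha_{n+s}}\setminus\Lambda_{\alpha_{n}}$ one has $n<N\le n+s$. Expanding $\lambda=\sum_{k\ge 1}A^{k-1}d_{k}$ with $d_{k}\in\Gamma_{q_{1},q_{2}}$ (Proposition \ref{exp}), Definition \ref{remal}(ii)--(iii) and the fact that $\Gamma_{q_{1},q_{2}}$ is a complete residue system $\pmod{A}$ give $d_{k}=\tau(I|_{k})$ for $k\le N$ and $d_{k}\in\mathcal{E}_{q_{1}}$ for $k>N$, at most $\ell_{\max}$ of the latter being nonzero and all but finitely many $d_{k}$ being $\mathbf{0}$. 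Since $m_{\mathcal{D}}$ is $\mathbb{Z}^{2}$-periodic and $\sum_{k>j}A^{k-1-j}d_{k}\in\mathbb{Z}^{2}$, for every $j>n+s$ we may replace $A^{-j}(\xi+\lambda)$ by
\[
\eta_{j}:=A^{-j}\xi+A^{-1}d_{j}+\sum_{k=1}^{j-1}A^{k-1-j}d_{k},
\]
so that $\hat{\nu}_{n+s}(\xi+\lambda)=\prod_{j>n+s}m_{\mathcal{D}}(\eta_{j})$.

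The key step is to show that every $\eta_{j}$ lies in a fixed compact box $K$ disjoint from $\mathcal{Z}(m_{\mathcal{D}})$, uniformly in $n,s,\lambda$ and in $\xi$ from the stated region. In the first coordinate each exponent $k-1-j$ with $k\le j-1$ is at most $-2$; combining this with $d_{j}\in\mathcal{E}_{q_{1}}$, which forces $|(A^{-1}d_{j})^{(1)}|=|d_{j}^{(1)}|/(3q_{1})\le 1/6$ (and $(A^{-1}d_{j})^{(1)}=0$ when $q_{1}=1$), with $|\xi^{(1)}|\le\frac{3q_{1}}{2(3q_{1}-1)}$ and with $j\ge n+s+1\ge 3$, a short computation distinguishing $q_{1}=1$ from $q_{1}\ge 2$ yields $|\eta_{j}^{(1)}|\le\frac{5}{18}<\frac13$ for all $j>n+s$. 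As any point of $\mathcal{Z}(m_{\mathcal{D}})$ has first coordinate congruent to $\pm\frac13\pmod{1}$, this already gives $\eta_{j}\notin\mathcal{Z}(m_{\mathcal{D}})$; crudely bounding the second coordinate ($|(A^{-1}d_{j})^{(2)}|\le\frac12$ and the remaining terms by $\frac{5}{18}$) places every $\eta_{j}$ in a fixed compact box $K$ with $K\cap\mathcal{Z}(m_{\mathcal{D}})=\emptyset$, so $\delta_{0}:=\min_{x\in K}|m_{\mathcal{D}}(x)|>0$ and $|m_{\mathcal{D}}(\eta_{j})|\ge\delta_{0}$ for all $j>n+s$.

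It remains to bound the infinite product from below uniformly, the obstruction being the infinitude of factors. Split the indices $j>n+s$ into the at most $\ell_{\max}$ ``bad'' ones with $d_{j}\ne\mathbf{0}$ and the ``good'' ones with $d_{j}=\mathbf{0}$; for the former $\prod|m_{\mathcal{D}}(\eta_{j})|\ge\delta_{0}^{\,\ell_{\max}}$. For the good indices $\eta_{j}=A^{-j}\xi+\sum_{k<j}A^{k-1-j}d_{k}$, and interchanging the order of summation one checks $\sum_{j>n+s}\bigl(|\eta_{j}^{(1)}|+|\eta_{j}^{(2)}|\bigr)\le C<\infty$ with $C$ depending only on $q_{1},q_{2},\ell_{\max}$: the $\xi$-part is a convergent geometric series, the part coming from the digits $d_{k}$ with $k\le N$ is bounded by a constant depending only on $q_{1},q_{2}$ irrespective of $N\le n+s$, and each of the $\le\ell_{\max}$ nonzero padding digits contributes a bounded amount. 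Using $m_{\mathcal{D}}(\mathbf{0})=1$ and the smoothness of $m_{\mathcal{D}}$ to get $-\log|m_{\mathcal{D}}(x)|\le C'\bigl(|x^{(1)}|+|x^{(2)}|\bigr)$ on $K$, we obtain $\prod_{\text{good }j}|m_{\mathcal{D}}(\eta_{j})|\ge e^{-C'C}$. Multiplying, $|\hat{\nu}_{n+s}(\xi+\lambda)|\ge\delta_{0}^{\,\ell_{\max}}e^{-C'C}=:c>0$, hence $|\hat{\nu}_{n+s}(\xi+\lambda)|^{2}\ge c^{2}$, uniformly over $\lambda\in\Lambda_{\alpha_{n+s}}\setminus\Lambda_{\alpha_{n}}$. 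The main difficulty, and the place where the regular-mapping structure is essential, lies in the ``bad'' indices: a nonzero padding digit at position $j$ pushes the second coordinate of $\eta_{j}$ up to order $1$, and it is only the constraint $d_{j}\in\mathcal{E}_{q_{1}}$ on the first coordinate, together with $\ell_{\max}<\infty$ keeping the number of such $j$ bounded, that saves both the disjointness from $\mathcal{Z}(m_{\mathcal{D}})$ and the convergence of the product.
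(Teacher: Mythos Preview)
Your argument is correct and complete, but it follows a genuinely different route from the paper's. The paper does not work with the mask product $\prod_{j>n+s}m_{\mathcal{D}}(\eta_{j})$ directly; instead it rewrites $\hat{\nu}_{n+s}(\xi+\lambda)=\hat{\mu}\bigl(A^{-(n+s)}(\xi+\lambda)\bigr)$ and then repeatedly applies the self-affine factorization $\hat{\mu}=\hat{\mu}_{m}\cdot(\hat{\mu}\circ A^{-m})$ together with the $\mathbb{Z}^{2}$-periodicity of $\hat\mu_{m}$ to peel off the at most $\ell_{\max}$ nonzero padding digits one at a time. After each peel the running argument is shown to lie in the fixed rectangle $S=\{|x^{(1)}|\le q_{1},\ |x^{(2)}|\le q_{2}\}$, which is disjoint from $\mathcal{Z}(\hat{\mu})$ by \eqref{eqmle33}; setting $C_{\tau}=\min_{x\in S}|\hat{\mu}(x)|>0$ then gives the clean finite-product bound $|\hat{\nu}_{n+s}(\xi+\lambda)|\ge C_{\tau}^{\,\ell_{\max}+2}$ with no infinite tail to control.

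Your approach is more elementary in that it never needs to locate $\mathcal{Z}(\hat{\mu})$ or evaluate $\hat{\mu}$ away from the origin---only the single mask $m_{\mathcal{D}}$ appears---but you pay for this by having to bound an honest infinite product, which forces the extra analytic ingredients (the Lipschitz estimate $-\log|m_{\mathcal{D}}(x)|\le C'\|x\|_{1}$ on $K$ and the uniform summability $\sum_{j}\|\eta_{j}\|_{1}\le C$ obtained by interchanging the double sum). The paper's recursive peeling sidesteps this entirely: each block $|\hat{\mu}_{m}|$, although itself a product of arbitrarily many masks, is bounded in one stroke via $|\hat{\mu}_{m}|\ge|\hat{\mu}|\ge C_{\tau}$ on $S$, so only finitely many factors ever appear. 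Both proofs exploit the same structural fact---that for $j>N$ the digit $d_{j}=\tau(I0^{j-N})$ lies in $\mathcal{E}_{q_{1}}$, hence has small first coordinate---to keep the relevant arguments away from the zero set.
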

    \begin{proof}
    Note that $\Lambda_{\alpha_{n+s}}\setminus \Lambda_{\alpha_n}=\{\lambda_k\}_{\alpha_n<|k|\leq \alpha_{n+s}}$. For any integer $k$ with $\alpha_n<|k|\leq \alpha_{n+s}$, there exists an integer  $N$ with $n<N\leq n+s$ such that
    \[\alpha_{N-1}<|k|\leq \alpha_{N}.\]
    Also, there exists a word $I=i_1i_2\cdots i_N\in \Theta_3^N$ such that
    \[k=i_1+i_2 3+\cdots+i_N 3^{N-1}\]
    with $i_N\neq0$.

    Let  $\xi=\begin{pmatrix}
		\xi^{(1)} \\
		\xi^{(2)} \\
	\end{pmatrix}\in \mathbb{R}^2$ with $|\xi^{(1)}|\leq \frac{3q_1}{2(3q_1-1)}$ and $|\xi^{(2)}|\leq \frac{3q_2}{2(3q_2-1)}$. Then
\[
\begin{split}
\xi+\lambda_k&=\xi+\tau(I|_1)+A\tau(I|_2)+\cdots+A^{N-1}\tau(I|_N)+A^N\tau(I0)+\\
&\quad\quad\quad\cdots+A^{n+s-1}\tau(I0^{n+s-N})+\sum_{i=1}^{\infty}A^{n+s-1+i}\tau(I0^{n+s-N+i})
\end{split}
\]
and thus
\[
\begin{split}
A^{-(n+s)}(\xi&+\lambda_k)=A^{-(n+s)}\big(\xi+\tau(I|_1)+A\tau(I|_2)+\cdots+A^{N-1}\tau(I|_N)\\
& +A^N\tau(I0)+\cdots+A^{n+s-1}\tau(I0^{n+s-N})\big)+\sum_{i=1}^{\infty}A^{i-1}\tau(I0^{n+s-N+i})\\
&\quad\quad\quad=:\xi_0+\sum_{i=1}^{\infty}A^{i-1}\tau(I0^{n+s-N+i}).
\end{split}
\]
Write $\xi_0=\begin{pmatrix}
		\xi_0^{(1)} \\
		\xi_0^{(2)} \\
	\end{pmatrix}.$ Then
\[
\begin{split}
|\xi_0^{(1)}|&\leq \frac{1}{(3q_1)^{n+s}}\left(|\xi^{(1)}|+\frac{3q_1}{2}+3q_1\cdot\frac{3q_1}{2}+\cdots+(3q_1)^{n+s-1}\cdot\frac{3q_1}{2}\right)\\
&\leq \frac{1}{(3q_1)^{n+s}}\left(\frac{3q_1}{2(3q_1-1)}+\frac{3q_1}{2}+3q_1\cdot\frac{3q_1}{2}+\cdots+(3q_1)^{n+s-1}\cdot\frac{3q_1}{2}\right)\\
&=\frac{3q_1}{2(3q_1-1)}.
\end{split}
\]
Similarly, we have $|\xi_0^{(2)}|\leq \frac{3q_2}{2(3q_2-1)}$.

 Write
\[
\{i:\tau(I0^{n+s-N+i})\neq \textbf{0}\}=\{i_1,i_2,\ldots,i_{L_0}\}
\]
with $i_1<i_2<\cdots<i_{L_0}$
and define $n_{i_j}:=n+s-N+i_j$ for $1\leq j\leq L_0$. By the assumption \eqref{condwe}, we have that $L_0\leq \ell_{\max}$.
Note that $\hat{\nu}_m(x)=\hat{\mu}(A^{-m}x)$ and $\hat{\mu}(x)=\hat{\mu}_m(x)\hat{\mu}(A^{-m}x)$ for all integer $m\geq1$ and $x\in\mathbb{R}^2$.
Then, by the periodicity of the functions $\hat{\delta}_{A^{-m}\mathcal{D}}$ we have
\begin{eqnarray}\label{eqduitui}
\begin{split}
|\hat{\nu}_{n+s}(\xi&+\lambda_k)|=|\hat{\mu}(A^{-(n+s)}(\xi+\lambda_k))|\\
&=\left|\hat{\mu}\bigg(\xi_0+\sum_{j=1}^{L_0}A^{i_j-1}\tau(I0^{n_{i_j}})\bigg)\right|\\
&=\left|\hat{\mu}_{i_1-1}\bigg(\xi_0+\sum_{j=1}^{L_0}A^{i_j-1}\tau(I0^{n_{i_j}})\bigg)\right|\cdot\\
&\left|\hat{\mu}\bigg(A^{-(i_1-1)}\xi_0+A^{-(i_1-1)}\sum_{j=1}^{L_0}A^{i_j-1}\tau(I0^{n_{i_j}})\bigg)\right|\\
&=|\hat{\mu}_{i_1-1}(\xi_0)|\cdot\left|\hat{\mu}\bigg(A^{-(i_1-1)}\xi_0+A^{-(i_1-1)}\sum_{j=1}^{L_0}A^{i_j-1}\tau(I0^{n_{i_j}})\bigg)\right|\\
&= |\hat{\mu}_{i_1-1}(\xi_0)|\cdot|\hat{\mu}_{i_2-i_1}(A^{-(i_1-1)}\xi_0+\tau(I0^{n_{i_1}}))|\cdot\\
&\quad\quad\left|\hat{\mu}\left(A^{-(i_2-i_1)}\bigg(A^{-(i_1-1)}\xi_0+A^{-(i_1-1)}\sum_{j=1}^{L_0}A^{i_j-1}\tau(I0^{n_{i_j}})\bigg)\right)\right|\\
&\geq |\hat{\mu}(\xi_0)|\cdot|\hat{\mu}(A^{-(i_1-1)}\xi_0+\tau(I0^{n_{i_1}}))|\cdot\\
&\quad\quad \left|\hat{\mu}\left(A^{-(i_2-i_1)}\bigg(A^{-(i_1-1)}\xi_0+A^{-(i_1-1)}\sum_{j=1}^{L_0}A^{i_j-1}\tau(I0^{n_{i_j}})\bigg)\right)\right|.
\end{split}
\end{eqnarray}

 Write $A^{-(i_1-1)}\xi_0+\tau(I0^{n_{i_1}})=\begin{pmatrix}
		a^{(1)} \\
		a^{(2)} \\
	\end{pmatrix}$. We claim that $|a^{(1)}|<q_1 $ and $|a^{(2)}|<q_2 $. In fact, if $i_1=1$, when $q_1=1$, then $\tau(I0^{n_{i_1}})=\textbf{0}$ and thus $|a^{(1)}|\leq \frac{3q_1}{2(3q_1-1)}=\frac{3}{4}$. Similarly, $|a^{(1)}|\leq \frac{3q_2}{2(3q_2-1)}$. When $q_1\geq 2$, $|a^{(1)}|\leq \frac{3q_1}{2(3q_1-1)}+\frac{q_1}{2}<q_1 $. Similarly, $|a^{(2)}|<q_2$. If $i_1\geq 2$, then $|a^{(1)}|<q_1 $ and $|a^{(2)}|<q_2 $.

Next, we consider the third term $$\left|\hat{\mu}\left(A^{-(i_2-i_1)}\bigg(A^{-(i_1-1)}\xi_0+A^{-(i_1-1)}\sum_{j=1}^{L_0}A^{i_j-1}\tau(I0^{n_{i_j}})\bigg)\right)\right|$$ in $\eqref{eqduitui}$. Note that
\[
\begin{split}
\Bigg|\hat{\mu}&\bigg(A^{-(i_2-i_1)}\big(A^{-(i_1-1)}\xi_0+A^{-(i_1-1)}\sum_{j=1}^{L_0}A^{i_j-1}\tau(I0^{n_{i_j}})\big)\bigg)\Bigg|\\
&=\left|\hat{\mu}\left(A^{-(i_2-1)}\xi_0+A^{-(i_2-i_1)}\tau(I0^{n_{i_1}})+A^{-(i_2-1)}\sum_{j=2}^{L_0}A^{i_j-1}\tau(I0^{n_{i_j}})\right)\right|.
\end{split}
\]
Let $\xi_1=\begin{pmatrix}
		\xi_1^{(1)} \\
		\xi_1^{(2)} \\
	\end{pmatrix}=A^{-(i_2-1)}\xi_0+A^{-(i_2-i_1)}\tau(I0^{n_{i_1}})$. Then it is easy to see that $|\xi_1^{(1)}|\leq \frac{3q_1}{2(3q_1-1)}<q_1$ and $|\xi_1^{(2)}|\leq \frac{3q_2}{2(3q_2-1)}<q_2$.

Define
\[
C_\tau:=\min_{x\in S}|\hat{\mu}(x)|,
\]
where
\[
S=\left\{x=\begin{pmatrix}
		x^{(1)} \\
		x^{(2)} \\
	\end{pmatrix}: |x^{(1)}|\le q_1, |x^{(2)}|\le q_2\right\}.
\]
It follows from $\eqref{eqmle33}$ that the compact set $S$  does not intersect with the zero set of $\hat{\mu}(x)$. This implies that $0<c_\tau\le 1$. Hence, by  $\eqref{eqduitui}$ and the above argument we have
 \[
 |\hat{\nu}_{n+s}(\xi+\lambda_k)|\geq |\hat{\mu}(\xi_0)|\cdot C_\tau\cdot \left|\hat{\mu}\left(\xi_1+A^{-(i_2-1)}\sum_{j=2}^{L_0}A^{i_j-1}\tau(I0^{n_{i_j}})\right)\right|
 \]
 By induction, we have that
 \[
 |\hat{\nu}_{n+s}(\xi+\lambda_k)|\geq |\hat{\mu}(\xi_0)| \cdot c_\tau^{L_0+1}\geq |\hat{\mu}(\xi_0)|\cdot c_\tau^{\ell_{\max}+1}=:c.
 \]
 Hence, we complete the proof.
 \end{proof}

Now we return to the proof of Theorem \ref{ppd112} and the rest argument is standard.

Let $\xi=\begin{pmatrix}
		\xi^{(1)} \\
		\xi^{(2)} \\
	\end{pmatrix}\in \mathbb{R}^2$ with $|\xi^{(1)}|\leq \frac{3q_1}{2(3q_1-1)}$ and $|\xi^{(2)}|\leq \frac{3q_2}{2(3q_2-1)}$.
It follows from Theorem \ref{thjp} and Lemma \ref{lemss} that $\sum_{\lambda\in \Lambda_{\alpha_{n+s}}}\left|\hat{\mu}_{n+s}(\xi+\lambda)\right|^2=1$. Hence,
by \eqref{fdgj} and Lemma \ref{lemssp} we have
\[
\begin{split}
Q_{n+s}(\xi)&\ge Q_{n}(\xi)+c\sum_{\lambda\in \Lambda_{\alpha_{n+s}}\setminus \Lambda_{\alpha_n}}\left|\hat{\mu}_{n+s}(\xi+\lambda)\right|^2\\
                    &=Q_{n}(\xi)+c\left(1-\sum_{\lambda\in \Lambda_{\alpha_n}}\left|\hat{\mu}_{n+s}(\xi+\lambda)\right|^2\right).
\end{split}
\]

Letting $s\to \infty$, the above inequality becomes
\[
Q_{\Lambda}(\xi)\ge Q_{n}(\xi)+c(1-Q_{n}(\xi)).
\]
Then, letting $n\to \infty$ we have
\[
0\ge c(1-Q_{\Lambda}(\xi)),
\]
which implies that $Q_{\Lambda}(\xi)\ge 1$. On the other hand, it follows from the orthogonality of $\Lambda(\tau)$ and Theorem \ref{thjp} that $Q_{\Lambda}(\xi)\le 1$. Therefore, $Q_{\Lambda}(\xi)= 1$, and by Theorem \ref{thjp} again, $\Lambda(\tau)$ is a spectrum of $\mu.$
\qed

\section{Beurling dimension of spectra of $\mu_{A,\mathcal{D}}$}\label{rege1}

This section is devoted to the study of the Beurling dimension of the spectra of $\mu_{A,\mathcal{D}}$ and  the proof of Theorem \ref{thexu} will be presented.

\begin{prop}\label{mulp}
Suppose that $\Lambda$ is an orthogonal set of $\mu_{A,\mathcal{D}}$. Then $\Lambda$ has at most one point on every horizontal and vertical line.
\end{prop}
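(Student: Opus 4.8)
The plan is to read off what we need directly from the description of the zero set $\mathcal{Z}(\hat{\mu}_{A,\mathcal{D}})$ in \eqref{eqmle33} and the orthogonality relation $(\Lambda-\Lambda)\setminus\{\mathbf{0}\}\subseteq\mathcal{Z}(\hat{\mu}_{A,\mathcal{D}})$ recorded earlier. Two distinct points $\lambda,\lambda'\in\Lambda$ lie on a common horizontal line exactly when $\lambda-\lambda'$ has vanishing second coordinate, and on a common vertical line exactly when $\lambda-\lambda'$ has vanishing first coordinate. Hence it suffices to prove the following: every vector in $\mathcal{Z}(\hat{\mu}_{A,\mathcal{D}})$ has \emph{both} coordinates different from $0$.

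This will follow from the shape of $\mathcal{Z}(m_{\mathcal{D}})$. Any $\eta\in\mathcal{Z}(m_{\mathcal{D}})$ has first coordinate congruent to $\tfrac13$ or $\tfrac23$ modulo $\mathbb{Z}$ and second coordinate congruent to $\tfrac23$ or $\tfrac13$ modulo $\mathbb{Z}$; in particular neither coordinate of $\eta$ is an integer, so in particular neither equals $0$. Since $A=\mathrm{diag}(n,m)$ and hence $A^k=\mathrm{diag}(n^k,m^k)$ is diagonal with nonzero diagonal entries, multiplication by $A^k$ preserves the property of having both coordinates nonzero. Consequently every element of $A^k\mathcal{Z}(m_{\mathcal{D}})$, and therefore of $\mathcal{Z}(\hat{\mu}_{A,\mathcal{D}})=\bigcup_{k\ge 1}A^k\mathcal{Z}(m_{\mathcal{D}})$, has both coordinates nonzero. (One could equally read this off the last line of \eqref{eqmle33}, since $q_1(1+3a)$ and $q_2(2+3b)$ are nonzero for all $a,b\in\mathbb{Z}$.)

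Putting these two observations together: if $\lambda\neq\lambda'$ are points of an orthogonal set $\Lambda$, then $\lambda-\lambda'\in\mathcal{Z}(\hat{\mu}_{A,\mathcal{D}})$ is a nonzero vector with both coordinates nonzero, so $\lambda$ and $\lambda'$ differ in both their first and their second coordinates; hence no horizontal line and no vertical line contains two distinct points of $\Lambda$. There is no real obstacle here — the argument is a short computation — and the only point requiring a moment's care is the congruence bookkeeping for both coordinates, the horizontal and vertical cases being entirely symmetric given the form of $\mathcal{Z}(m_{\mathcal{D}})$.
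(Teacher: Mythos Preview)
Your proof is correct and follows essentially the same approach as the paper: both arguments use the inclusion $(\Lambda-\Lambda)\setminus\{\mathbf{0}\}\subseteq\mathcal{Z}(\hat{\mu}_{A,\mathcal{D}})$ together with the explicit description \eqref{eqmle33} of the zero set to conclude that any nonzero difference of elements of $\Lambda$ has both coordinates nonzero. Your presentation spells out the congruence reasoning a bit more explicitly, but the underlying argument is the same.
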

\begin{proof}
For any two distinct elements $\lambda_1,\lambda_2$ in $\Lambda$, by the orthogonality of $\Lambda$, we have
\[\lambda_1-\lambda_2\in\mathcal{Z}(\hat{\mu}_{A,\mathcal{D}})=\bigcup_{j=1}^{\infty}A^j\left(\Bigg(\frac{1}{3}\begin{pmatrix}
		1 \\
		2 \\
	\end{pmatrix}+\mathbb{Z}^2\bigg)\cup\bigg(\frac{2}{3}\begin{pmatrix}
		1 \\
		2 \\
	\end{pmatrix}+\mathbb{Z}^2\Bigg)\right).\]
Then there exists $k\geq1$ such that
\begin{equation}\label{eqhla}
\lambda_1-\lambda_2\in A^k\left(\Bigg(\frac{1}{3}\begin{pmatrix}
		1 \\
		2 \\
	\end{pmatrix}+\mathbb{Z}^2\bigg)\cup\bigg(\frac{2}{3}\begin{pmatrix}
		1 \\
		2 \\
	\end{pmatrix}+\mathbb{Z}^2\Bigg)\right).
\end{equation}
Write $\lambda_1=\begin{pmatrix}
		\lambda_1^{(1)} \\
		\lambda_1^{(2)} \\
	\end{pmatrix}$ and $\lambda_2=\begin{pmatrix}
		\lambda_2^{(1)} \\
		\lambda_2^{(2)} \\
	\end{pmatrix}$. By \eqref{eqhla}, we have $\lambda_1^{(1)}\neq\lambda_2^{(1)}$ and $\lambda_1^{(2)}\neq\lambda_2^{(2)}$. This implies that $\Lambda$ has at most one point on every horizontal and vertical line.
\end{proof}

Let $\pi_x$ and $\pi_y$ be the canonical projections of $\R^2$ onto the $x$ and $y$-axes, respectively.
The following proposition, which is proved in \cite{DFY21}, establishes a relationship of orthogoanl sets between $\mu_{A,\mathcal{D}}$ and the self-similar measures $\mu_{3q_i,\{0,1,2\}},i=1,2$. Here, for any integer $q\geq1$, $\mu_{3q,\{0,1,2\}}$ is the unique Borel probability measure $\mu:=\mu_{3q, \{0,1,2\}}$, which satisfies the invariance equation
    \begin{eqnarray*}
    \mu(E)=\frac 1{3}\sum_{i=0}^{2}\mu(3qE-i),\qquad \text{for any Borel set $E$}.
    \end{eqnarray*}
    \begin{prop}[\cite{DFY21}]\label{projj}
Let $\Lambda$ be an orthogonal set of $\mu_{A,\mathcal{D}}$. Then $\pi_x(\Lambda)$ is an orthogonal set of $\mu_{3q_1,\{0,1,2\}}$ and $\pi_y(\Lambda)$ is an orthogonal set of $\mu_{3q_2,\{0,1,2\}}$.
\end{prop}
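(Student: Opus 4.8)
The plan is to reduce the statement to the explicit descriptions of the relevant zero sets, exactly as in the derivation of \eqref{eqmle33}. First I would record the one–dimensional analogue: for an integer $q\ge 1$ one has
\[
\hat{\mu}_{3q,\{0,1,2\}}(\xi)=\prod_{j=1}^{\infty}\widetilde{m}\big((3q)^{-j}\xi\big),\qquad \widetilde{m}(x)=\tfrac13\big(1+e^{-2\pi i x}+e^{-4\pi i x}\big),
\]
whose mask $\widetilde{m}$ vanishes precisely when $e^{-2\pi i x}$ is a primitive cube root of unity, i.e. $\mathcal{Z}(\widetilde{m})=\tfrac13\mathbb{Z}\setminus\mathbb{Z}$. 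Since $(3q)^{-j}\xi\to 0$ and $\widetilde{m}(0)=1$, the tail of the product converges to a nonzero value, so the product vanishes exactly when one factor does; hence
\[
\mathcal{Z}(\hat{\mu}_{3q,\{0,1,2\}})=\bigcup_{k=1}^{\infty}(3q)^k\big(\tfrac13\mathbb{Z}\setminus\mathbb{Z}\big).
\]
This is the target set for the two coordinate projections.

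Next I would invoke Proposition \ref{mulp}: it says $\pi_x$ and $\pi_y$ are injective on $\Lambda$, so $\big(\pi_x(\Lambda)-\pi_x(\Lambda)\big)\setminus\{0\}$ consists exactly of the numbers $\pi_x(\lambda_1)-\pi_x(\lambda_2)$ with $\lambda_1\ne\lambda_2$ in $\Lambda$, and similarly for $\pi_y$. Thus it suffices to show, for each such pair, that the first coordinate of $\lambda_1-\lambda_2$ lies in $\mathcal{Z}(\hat{\mu}_{3q_1,\{0,1,2\}})$ and the second in $\mathcal{Z}(\hat{\mu}_{3q_2,\{0,1,2\}})$; membership of $\mathbf 0$ in the difference set of $\pi_x(\Lambda)$ (resp. $\pi_y(\Lambda)$) does not need to be checked precisely because of injectivity.

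For the computation, orthogonality of $\Lambda$ gives $\lambda_1-\lambda_2\in\mathcal{Z}(\hat{\mu}_{A,\mathcal{D}})$, so by \eqref{eqmle33} there are $k\ge1$, $j\in\{1,2\}$ and $v=(v_1,v_2)^{T}\in\mathbb{Z}^2$ with
\[
\lambda_1-\lambda_2=A^{k}\left(\frac{j}{3}\begin{pmatrix}1\\2\end{pmatrix}+v\right).
\]
Since $A=\operatorname{diag}(3q_1,3q_2)$, the first coordinate equals $(3q_1)^{k}\big(\tfrac{j}{3}+v_1\big)$ and the second equals $(3q_2)^{k}\big(\tfrac{2j}{3}+v_2\big)$. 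As $j\in\{1,2\}$ and $2$ is invertible mod $3$, both $\tfrac{j}{3}$ and $\tfrac{2j}{3}$ reduce mod $1$ to an element of $\{\tfrac13,\tfrac23\}$, so $\tfrac{j}{3}+v_1$ and $\tfrac{2j}{3}+v_2$ belong to $\tfrac13\mathbb{Z}\setminus\mathbb{Z}$. Consequently the first coordinate of $\lambda_1-\lambda_2$ lies in $(3q_1)^{k}\big(\tfrac13\mathbb{Z}\setminus\mathbb{Z}\big)\subseteq\mathcal{Z}(\hat{\mu}_{3q_1,\{0,1,2\}})$ and the second in $(3q_2)^{k}\big(\tfrac13\mathbb{Z}\setminus\mathbb{Z}\big)\subseteq\mathcal{Z}(\hat{\mu}_{3q_2,\{0,1,2\}})$, proving both projections are orthogonal sets of the asserted self-similar measures.

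I do not expect a genuine obstacle: the argument is direct substitution into \eqref{eqmle33} together with the diagonal form of $A$. The only points requiring a moment's care are (i) confirming that the mod-$1$ reduction of $\tfrac{2j}{3}$ still lands on the ``cube-root-of-unity'' residues $\tfrac13,\tfrac23$ for both $j=1$ and $j=2$, so that the $y$-coordinate genuinely hits $\mathcal{Z}(\widetilde{m})$, and (ii) explicitly citing Proposition \ref{mulp} so that injectivity of the projections is what rules out a spurious zero at the origin in the projected difference sets.
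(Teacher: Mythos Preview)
Your argument is correct. Note, however, that the paper does not supply its own proof of this proposition: it is quoted from \cite{DFY21} and stated without proof. So there is no in-paper argument to compare against; your write-up simply fills that gap, and does so along the expected lines---read off the zero set \eqref{eqmle33}, use the diagonal form of $A$ to split into coordinates, and match each coordinate against the one-dimensional zero set $\bigcup_{k\ge1}(3q)^k\big(\tfrac13\mathbb{Z}\setminus\mathbb{Z}\big)$.

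One small remark on economy: your appeal to Proposition~\ref{mulp} is logically redundant. To show $\pi_x(\Lambda)$ is an orthogonal set you only need $\big(\pi_x(\Lambda)-\pi_x(\Lambda)\big)\setminus\{0\}\subseteq\mathcal{Z}(\hat\mu_{3q_1,\{0,1,2\}})$, and any nonzero element of that difference set is automatically of the form $\pi_x(\lambda_1)-\pi_x(\lambda_2)$ with $\lambda_1\ne\lambda_2$. Your subsequent computation then places this difference in $(3q_1)^k\big(\tfrac13\mathbb{Z}\setminus\mathbb{Z}\big)$, which in particular excludes $0$; so the injectivity of $\pi_x|_\Lambda$ is in fact a \emph{consequence} of your calculation rather than a prerequisite. (Indeed, this is exactly how the paper proves Proposition~\ref{mulp}.) Citing Proposition~\ref{mulp} is harmless, but you could drop it and the proof would still stand.
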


Denote by
\[
\Lambda(A,\mathcal{L})=\bigcup_{k=1}^{\infty}\Lambda_k,
\]
where $\Lambda_k=\mathcal{L}+A \mathcal{L}+\cdots+A^{k-1}\mathcal{L}$ and $\mathcal{L}$ is given by \eqref{do}.
It is well-known that $\Lambda(A, \mathcal{L})$ is a spectrum of $\mu_{A,\mathcal{D}}$, see \cite{JLL10}. Moreover, by Theorem 1.4 in \cite{LWu322}, $\dim_{Be}(\Lambda(A,\mathcal{L}))=\frac{\log 3}{\log 3q_2}$.

The following result says that the value is in fact the optimal upper bound of Beurling dimensions of spectra for $\mu_{A,\mathcal{D}}$.

\begin{theorem}\label{thdddll}
For any  orthogonal set $\Lambda$ of $\mu_{A,\mathcal{D}}$, we have that
$\dim_{Be}(\Lambda)\leq\frac{\log3}{\log 3q_2}$. Moreover, suppose $\Lambda$ is a spectrum of  $\mu_{A,\mathcal{D}}$ and there exists an integer $p\geq1$ such that
\[
\sup_{\lambda\in\Lambda}\inf_{\gamma\in\Lambda}\|A^{-p}\lambda-\gamma\|<+\infty.
\]
Then $\dim_{Be}(\Lambda)=\frac{\log 3}{\log 3q_2}$.
\end{theorem}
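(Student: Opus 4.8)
<br>

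The plan is to prove the theorem in two parts, corresponding to the upper bound and the lower bound, exploiting the product-like structure of the problem via the projections $\pi_x,\pi_y$.

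\textbf{The upper bound.} The key idea is to reduce the two-dimensional estimate to a one-dimensional one through Proposition \ref{projj}. By Proposition \ref{projj}, $\pi_y(\Lambda)$ is an orthogonal set of $\mu_{3q_2,\{0,1,2\}}$, and by Proposition \ref{mulp}, $\pi_y$ is injective on $\Lambda$, so $\#\Lambda = \#\pi_y(\Lambda)$. Moreover, $\pi_y$ is $1$-Lipschitz, so $\pi_y(\Lambda\cap B(x,h)) \subseteq B(\pi_y(x),h)$. Hence $D_r^+(\Lambda) \le D_r^+(\pi_y(\Lambda))$ and therefore $\dim_{Be}(\Lambda) \le \dim_{Be}(\pi_y(\Lambda))$. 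It then suffices to bound the Beurling dimension of any orthogonal set of the self-similar measure $\mu_{3q_2,\{0,1,2\}}$ by $\frac{\log 3}{\log 3q_2}$; this is exactly the known estimate for self-similar spectral measures under the open set condition (the $\mathbb{R}^1$ case of \cite{DHSW11}, or equivalently \cite{TW21,ZX20}), applied with contraction ratio $\frac{1}{3q_2}$ and $3$ maps. So the first statement follows once I invoke that one-dimensional result; I would state it as a cited lemma and note that the $\frac{\log 3}{\log 3q_2}$ value equals $\frac{\log(\#\text{digits})}{\log(\text{base})}$.

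\textbf{The lower bound.} This is where condition \eqref{eq-upperconditioin1} enters, and I expect it to be the main obstacle. The plan is to show that if $\Lambda$ is a spectrum of $\mu_{A,\mathcal{D}}$ satisfying \eqref{eq-upperconditioin1} for some integer $p\ge 1$, then $D_r^+(\Lambda)=\infty$ for every $r<\frac{\log 3}{\log 3q_2}$. The mechanism: since $\Lambda$ is a spectrum, the completeness relation $Q_\Lambda(\xi)\equiv 1$ forces $\Lambda$ to be ``dense enough'' at large scales. The standard argument (as in \cite{DHSW11,HKTW18}) is to use the Jorgensen--Pedersen identity together with the decay rate of $\hat\mu_{A,\mathcal{D}}$: one shows that for a ball $B(0,h)$ of a suitably chosen radius $h \asymp (3q_2)^k$, the number of $\lambda\in\Lambda$ inside must be at least of order $3^k$, because the ``tail'' contributions $\sum_{\lambda\notin B(0,h)}|\hat\mu(\xi+\lambda)|^2$ are too small to account for the total mass $1$. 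Condition \eqref{eq-upperconditioin1} is precisely what guarantees that after scaling by $A^{-p}$, the spectrum $\Lambda$ is ``boundedly close'' to itself, which lets one iterate this counting estimate and propagate the lower density bound to all scales; without it the spectrum could be concentrated along thin tubes and be arbitrarily sparse in the $y$-direction. I would set $h_k = (3q_2)^{pk}$, use that $\hat\mu_{A,\mathcal{D}}$ restricted to the region $A^{-pk}B(0,h_k)$ is uniformly bounded below on a fixed compact neighborhood of $\mathbf{0}$ (cf. the set $S$ in the proof of Theorem \ref{ppd112}), and conclude $\#(\Lambda\cap B(x_k,h_k))\gtrsim 3^{pk} = h_k^{\log 3/\log 3q_2}$ for suitable centers $x_k$, giving $D_r^+(\Lambda)=\infty$ for $r<\frac{\log 3}{\log 3q_2}$.

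Combining the two parts yields $\dim_{Be}(\Lambda)=\frac{\log 3}{\log 3q_2}$ under \eqref{eq-upperconditioin1}. The cleanest write-up would isolate the one-dimensional upper bound as a cited fact, prove the projection reduction carefully using Propositions \ref{mulp} and \ref{projj}, and then devote the bulk of the argument to the self-affine counting lower bound, making explicit how \eqref{eq-upperconditioin1} is used to carry the estimate from scale $A^{-p}$ to all powers $A^{-pk}$. I anticipate the delicate point is bookkeeping the uniform lower bound on $|\hat\mu_{A,\mathcal{D}}|$ over the rescaled balls, ensuring the implied constants do not degrade as $k\to\infty$.
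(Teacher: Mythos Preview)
Your upper bound argument is exactly the paper's: project via $\pi_y$, use Proposition~\ref{mulp} for injectivity and Proposition~\ref{projj} to identify $\pi_y(\Lambda)$ as an orthogonal set of $\mu_{3q_2,\{0,1,2\}}$, then invoke the one-dimensional bound from \cite{DHSW11}.

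For the lower bound you propose substantially more than the paper does. The paper's proof of the second assertion is a single sentence: it cites Theorem~1.6 of \cite{TW21}, which already establishes $\dim_{Be}(\Lambda)\ge \frac{\log 3}{\log 3q_2}$ for any spectrum satisfying \eqref{eq-upperconditioin1}, and combines this with the upper bound just proved. Your sketch---using the Jorgensen--Pedersen identity $Q_\Lambda\equiv 1$, a uniform lower bound on $|\hat\mu_{A,\mathcal{D}}|$ over rescaled balls, and iterating via condition \eqref{eq-upperconditioin1} to get $\#(\Lambda\cap B(x_k,h_k))\gtrsim 3^{pk}$ with $h_k\asymp(3q_2)^{pk}$---is essentially an outline of how the result in \cite{TW21} is proved, and is along the right lines. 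But since that lower bound is already in the literature (indeed the paper quotes it in the introduction), there is no need to redevelop it here; the new content of Theorem~\ref{thdddll} is the sharpened \emph{upper} bound, and your argument for that part is complete.
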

\begin{proof}
Let $h>1$ and  $x=\begin{pmatrix}
		x^{(1)}\\
		x^{(2)}\\
	\end{pmatrix}\in\mathbb{R}^2$. By Proposition \ref{mulp} we have
\begin{equation}\label{eqpro}
\#(\Lambda\cap B(x,h))\le \#(\pi_y(\Lambda)\cap B(x^{(2)},h)).
\end{equation}
It follows from Proposition \ref{projj} that $\pi_y(\Lambda)$ is an orthogonal set of $\mu_{3q_2,\{0,1,2\}}$. So, by Theorem 3.5 in \cite{DHSW11}, we have that $\dim_{Be}(\pi_y(\Lambda))\leq \frac{\log 3}{\log (3q_2)}$. Fix $\varepsilon>0$. It follows from the  definition of Beurling dimension that
\[
\limsup\limits_{h\to \infty}\sup_{u\in \mathbb{R}}\frac{\#(\pi_y(\Lambda)\cap B(u,h))}{h^{\frac{\log 3}{\log (3q_2)}+\varepsilon}}=0.
\]
This, combined with \eqref{eqpro} and the definition of Beurling dimension, implies that $\dim_{Be}(\Lambda)\leq \frac{\log 3}{\log (3q_2)}$.

The second assertion follows immediately from Theorem 1.6 in \cite{TW21} and the first assertion.
\end{proof}

For any spectral measure, Shi \cite{Shi} proved that the Beurling dimension of its spectra is bounded by the upper entropy dimension of the spectral measure. For self-similar spectral measures or some Moran spectral measures, the upper bound can be attained, see \cite{DHSW11,HKTW18, LWu122}.  We next calculate the entropy dimension of $\mu_{A,\mathcal{D}}$ and our result (Proposition \ref{propeed}) says that the entropy dimension of $\mu_{A,\mathcal{D}}$ is strictly larger than $\frac{\log 3}{\log (3q_2)}$, which implies that the entropy dimension is not a good candidate to control the Beurling dimensions of spectra for $\mu_{A,\mathcal{D}}$.

Let's  recall the notion of entropy dimension for a probability measure $\mu$, which  is frequently used in fractal geometry and dynamical system. Let $\mathcal{P}_n$ be the $n$-th dyadic partition of $\mathbb{R}^d$, i.e.,
\[
\mathcal{P}_n=\{I_1\times I_2\times \cdots\times I_d:~I_j\in \mathcal{P}^{(1)}_n, 1\le j\le d\},
\]
where
\[
\mathcal{P}^{(1)}_n =\left\{\left[\frac{k}{2^n},\frac{k+1}{2^n}\right):k\in \mathbb{Z}\right\}.
\]
Write
\[
H_n(\mu)=-\sum_{Q\in \mathcal{P}_n}\mu(Q)\log \mu(Q).
\]

The \textit{upper entropy dimension} of $\mu$ is defined by
\[
\overline{\dim}_e\mu=\limsup\limits_{n\to \infty}\frac{H_n(\mu)}{\log 2^n}.
\]
The \textit{lower entropy dimension} $\underline{\dim}_e \mu$ is defined similarly by taking the lower limit. If the upper entropy dimension and lower entropy dimension are equal, then we call this common value the \textit{entropy dimension} of $\mu$.

Let $\mu^x, \mu^y$ be the projection of  $\mu_{A,\mathcal{D}}$ onto the $x$ and $y$-axes, respectively, i.e., $\mu^x=\mu\circ\pi_x^{-1}$ and $\mu^y=\mu\circ\pi_y^{-1}$.
\begin{prop}\label{propeed}
 We have
\[\dim_e\mu_{A,\mathcal{D}}=\frac{\dim_e\mu^x\cdot\log\frac{3q_2}{3q_1}+\log 3}{\log (3q_2)}\in\left(\frac{\log 3}{\log (3q_2)},\frac{\log 3}{\log (3q_1)}\right),
 \]
where
 \[
 \dim_e\mu^x=\frac{\frac{2}{3}\log \frac{2}{3}+\frac{1}{3}\log\frac{1}{3}}{-\log (3q_1)}.
 \]
\end{prop}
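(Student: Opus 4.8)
Proposal for the proof of Proposition \ref{propeed}.

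The plan is to compute $H_m(\mu_{A,\mathcal D})$ (the entropy relative to the $m$-th dyadic partition $\mathcal P_m$) along a sequence of \emph{adic} partitions adapted to the two eigenvalues $3q_1\le 3q_2$ of $A$, and to recognise the horizontal part of the cell count as an entropy of the self-similar projection $\mu^x$. Write $\mu:=\mu_{A,\mathcal D}$; a $\mu$-distributed point is $\sum_{j\ge 1}A^{-j}d_j$ with the $d_j$ i.i.d.\ uniform on $\mathcal D$, and $(d_j)_1=1$ exactly when $d_j=\binom{1}{0}$ while $(d_j)_2=1$ exactly when $d_j=\binom{0}{1}$, so these two events are disjoint, each of probability $\tfrac13$. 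Hence $\mu^x$ is the self-similar measure of $\{x\mapsto x/3q_1,\ x\mapsto(x+1)/3q_1\}$ with weights $(\tfrac23,\tfrac13)$ and $\mu^y$ that of $\{y\mapsto y/3q_2,\ y\mapsto(y+1)/3q_2\}$ with the same weights.

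First I record exact entropy identities on adic grids. For $j\ge1$ let $\mathcal I^x_j$ partition $\mathbb{R}$ into the $3q_1$-adic intervals $[l(3q_1)^{-j},(l+1)(3q_1)^{-j})$; since base-$(3q_1)$ expansions are unique and $\mu^x$ carries only the digits $\{0,1\}$, a level-$j$ interval of positive $\mu^x$-mass corresponds to a unique string $a_1\cdots a_j\in\{0,1\}^j$ with mass $(2/3)^{\#\{i:a_i=0\}}(1/3)^{\#\{i:a_i=1\}}$, so $H_{\mathcal I^x_j}(\mu^x)=j\,H(\tfrac23,\tfrac13)$, where $H(\tfrac23,\tfrac13):=-\tfrac23\log\tfrac23-\tfrac13\log\tfrac13$. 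Likewise, for $k'\ge k\ge1$ let $\mathcal G_{k',k}$ be the grid of cells $[l(3q_1)^{-k'},(l+1)(3q_1)^{-k'})\times[l'(3q_2)^{-k},(l'+1)(3q_2)^{-k})$. Using the same tail estimate ($\mathrm{supp}\,\mu^x\subseteq[0,1/(3q_1-1)]$, $\mathrm{supp}\,\mu^y\subseteq[0,1/(3q_2-1)]$) one checks that a cell of positive $\mu$-mass corresponds to a unique pair $a_1\cdots a_{k'}\in\{0,1\}^{k'}$ ($x$-digits) and $b_1\cdots b_k\in\{0,1\}^{k}$ ($y$-digits) with $(a_j,b_j)\ne(1,1)$ for $j\le k$, and reading off the product measure of the $d_j$'s its mass is $(1/3)^{k}\prod_{j=k+1}^{k'}(2/3)^{[a_j=0]}(1/3)^{[a_j=1]}$; summing gives
\[
H_{\mathcal G_{k',k}}(\mu)=k\log 3+(k'-k)\,H(\tfrac23,\tfrac13).
\]

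Now pass to dyadic scales. Given $m\ge1$, choose $k=k(m)$ with $(3q_2)^{-(k+1)}<2^{-m}\le(3q_2)^{-k}$ and $k'=k'(m)$ with $(3q_1)^{-(k'+1)}<2^{-m}\le(3q_1)^{-k'}$; then $k\le k'$ (because $q_1\le q_2$), $k\to\infty$ as $m\to\infty$, and $k\log(3q_2)=m\log 2+O(1)$, $k'\log(3q_1)=m\log 2+O(1)$. Both side-lengths of a $\mathcal G_{k',k}$-cell lie in $[2^{-m},3q_2\cdot 2^{-m})$, so $\mathcal G_{k',k}$ and $\mathcal P_m$ are commensurable: each cell of either meets at most $N=(3q_2+1)^2$ cells of the other, whence $|H_m(\mu)-H_{\mathcal G_{k',k}}(\mu)|\le\log N$ by the usual conditioning estimate $H_{\mathcal A\vee\mathcal B}=H_{\mathcal A}+H(\mathcal B\mid\mathcal A)$; the same comparison applied to $\mathcal I^x_j$ gives $\dim_e\mu^x=\frac{\frac23\log\frac23+\frac13\log\frac13}{-\log(3q_1)}>0$. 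Feeding the displayed identity, $H(\tfrac23,\tfrac13)=\dim_e\mu^x\log(3q_1)$, and the asymptotics of $k,k'$ into $H_m(\mu)=H_{\mathcal G_{k',k}}(\mu)+O(1)$ yields $H_m(\mu)=k\log 3+k\big(\log(3q_2)-\log(3q_1)\big)\dim_e\mu^x+O(1)$; dividing by $m\log 2=k\log(3q_2)+O(1)$ and letting $m\to\infty$ gives
\[
\dim_e\mu=\frac{\log 3+\dim_e\mu^x\,\log\frac{3q_2}{3q_1}}{\log(3q_2)},
\]
the limit being genuine (so $\underline{\dim}_e\mu=\overline{\dim}_e\mu$) because these estimates are uniform over all large $m$. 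Finally the two-sided bound is a short computation: $\dim_e\mu-\frac{\log 3}{\log(3q_2)}=\frac{\dim_e\mu^x\,\log(3q_2/3q_1)}{\log(3q_2)}$ and $\frac{\log 3}{\log(3q_1)}-\dim_e\mu=\frac{\log(3q_2/3q_1)\,\big(\log 3-\dim_e\mu^x\log(3q_1)\big)}{\log(3q_1)\log(3q_2)}$, both strictly positive when $q_1<q_2$, since $\dim_e\mu^x>0$ and $\dim_e\mu^x\log(3q_1)=H(\tfrac23,\tfrac13)\le\log 2<\log 3$ (for $q_1=q_2$ the three quantities coincide, which is precisely the self-similar case). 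I expect the one genuinely technical point to be the passage from the digit-adapted grids $\mathcal G_{k',k}$, $\mathcal I^x_j$ to honest dyadic cells — bounding the commensurability constant and verifying that each large $m$ is accounted for with a uniform $O(1)$ error — everything else (the exact identities and the closing algebra) being routine.
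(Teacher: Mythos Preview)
Your argument is correct and takes a genuinely different route from the paper. The paper does not compute $H_m(\mu)$ at all: it simply observes that the IFS $\{A^{-1}(x+d):d\in\mathcal D\}$ satisfies the rectangular open set condition and then invokes the Feng--Wang formula (Lemma~\ref{leentro}) to obtain $\dim_e\mu_{A,\mathcal D}=\dfrac{\dim_e\mu^x\cdot\log(3q_2/3q_1)+\log 3}{\log(3q_2)}$ as a black box; for $\dim_e\mu^x$ it identifies $\mu^x$ with the self-similar measure for $\{x/3q_1,(x+1)/3q_1\}$ with weights $(2/3,1/3)$, appeals to OSC, exact-dimensionality (Feng--Hu) and the equality of entropy and Hausdorff dimension (Fan--Lau--Rao), and reads off $\dim_H\mu^x$ from Hutchinson's formula. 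Your approach replaces all of these citations by a direct, self-contained computation: the exact identity $H_{\mathcal G_{k',k}}(\mu)=k\log 3+(k'-k)H(2/3,1/3)$ on the mixed adic grid (which drops out of the digit bijection you set up) does the work of Feng--Wang, and the analogous one-dimensional identity $H_{\mathcal I^x_j}(\mu^x)=jH(2/3,1/3)$ does the work of Hutchinson plus exact-dimensionality. The only analytic step is the commensurability transfer to dyadic scales, and your bounded-multiplicity estimate $|H_{\mathcal P_m}-H_{\mathcal G_{k',k}}|\le\log N$ handles that cleanly. What you gain is an elementary proof requiring no external results; what the paper's route gains is brevity and a placement of the computation inside a general framework for self-affine entropy dimensions. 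Your closing remark on $q_1=q_2$ is also apt: the open interval in the statement is nonempty only in the strictly self-affine case, and in the self-similar case all three quantities collapse to $\log 3/\log(3q_1)$.
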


To prove it, we need the following two results. The first one was proved in \cite[Theorem 4]{FW05} and the second one was proved by Hutchinson in \cite{Hut}, see also \cite{LW}.

\begin{lemma}\label{leentro}
Let $\Phi=\{\tau_b(x)=A^{-1}(x+b):b\in \mathcal{B}\}$ be a self-affine  IFS satisfying the ROSC. Then
\begin{equation*}
	\dim_e(\mu_{A,\mathcal{B}})=
	\begin{cases}
		\frac{\dim_e\mu^x\cdot\log\frac{m}{n}+\log\#\mathcal{B}}{\log m},& \text{if $n<m$;}\\
		\frac{\dim_e\mu^y\cdot\log\frac{n}{m}+\log\#\mathcal{B}}{\log n},& \text{if $n\geq m$.}
	\end{cases}
\end{equation*}
Here, we say that $\Phi=\{\tau_b(x)=A^{-1}(x+b):b\in \mathcal{B}\}$ satisfies the {\it rectangular open set condition (ROSC)}, if there exists an open rectangle $T=(0,R_1)\times (0,R_2)+v$ such that $\{\tau_b(T)\}_{b\in\mathcal{B}}$ are disjoint subsets of $T$.
\end{lemma}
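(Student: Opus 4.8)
The plan is to collapse the two-dimensional entropy count to a one-dimensional one, by refining the self-affine cylinder partition by vertical dyadic strips so as to produce a partition into ``approximate squares''. By the symmetry of the coordinate axes we may assume $n<m$ (if $n>m$, swap the roles of $x$ and $y$; if $n=m$ both expressions equal $\log\#\mathcal{B}/\log n$, the classical self-similar identity, and this is the degenerate case of the argument below). Write $N=\#\mathcal{B}$, $\mu=\mu_{A,\mathcal{B}}$, $\gamma=\log n/\log m\in(0,1)$, and let ``$O(1)$'' denote a quantity bounded by a constant depending only on $T$ and $\mathcal{B}$.

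First I would record the symbolic cylinders. For $k\ge1$ let $\mathcal{Q}_k$ decompose $\supp\mu$ into the cylinders $C_{\mathbf{b}}=\tau_{b_1}\circ\cdots\circ\tau_{b_k}(\supp\mu)$, $\mathbf{b}=b_1\cdots b_k\in\mathcal{B}^k$. Iterating the ROSC, the rectangles $\tau_{b_1}\circ\cdots\circ\tau_{b_k}(T)$ are pairwise disjoint, so the $C_{\mathbf{b}}$ overlap only in $\mu$-null sets, $\mu(C_{\mathbf{b}})=N^{-k}$, and $H(\mu,\mathcal{Q}_k)=k\log N$. Since $\tau_{b_1}\circ\cdots\circ\tau_{b_k}(x)=A^{-k}x+v_{\mathbf{b}}$ and $A$ is diagonal, $C_{\mathbf{b}}$ lies in the axis-parallel rectangle $A^{-k}\overline{T}+v_{\mathbf{b}}$ of dimensions comparable to $n^{-k}\times m^{-k}$, the normalized restriction $\mu_{\mathbf{b}}:=\mu(\,\cdot\mid C_{\mathbf{b}})$ equals $(A^{-k}\cdot+v_{\mathbf{b}})_*\mu$, and its $x$-marginal is the rescaled copy $(n^{-k}\cdot+v_{\mathbf{b}}^{(1)})_*\mu^x$.

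Next I would fix $\ell$, set $k=k(\ell)=\lfloor \ell\log2/\log m\rfloor$ — so $m^{-k}$ is comparable to $2^{-\ell}$ while $n^{-k}\gg 2^{-\ell}$ — and cut each cylinder by the vertical dyadic strips of width $2^{-\ell}$: put $\mathcal{R}_\ell=\mathcal{Q}_k\vee\{J\times\mathbb{R}:J\in\mathcal{P}^{(1)}_\ell\}$. A cell of $\mathcal{R}_\ell$ has width at most $2^{-\ell}$ and height comparable to $2^{-\ell}$; since the bounding boxes $A^{-k}\overline{T}+v_{\mathbf{b}}$ are pairwise-disjoint rectangles of dimensions $\asymp n^{-k}\times 2^{-\ell}$, only $O(1)$ of them can meet any fixed $2^{-\ell}$-box, and conversely each $\mathcal{R}_\ell$-cell meets $O(1)$ dyadic boxes, whence $|H(\mu,\mathcal{R}_\ell)-H_\ell(\mu)|=O(1)$. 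On the other hand, as $\mathcal{R}_\ell$ refines $\mathcal{Q}_k$ and cutting by vertical strips sees only the $x$-marginal,
\[
H(\mu,\mathcal{R}_\ell)=H(\mu,\mathcal{Q}_k)+\sum_{\mathbf{b}}\mu(C_{\mathbf{b}})\,H\big(\pi_x\mu_{\mathbf{b}},\ \mathcal{P}^{(1)}_\ell\big)=k\log N+H_{\ell'}(\mu^x)+O(1),
\]
where $\pi_x\mu_{\mathbf{b}}=(n^{-k}\cdot+v_{\mathbf{b}}^{(1)})_*\mu^x$ forces, uniformly in $\mathbf{b}$, $H(\pi_x\mu_{\mathbf{b}},\mathcal{P}^{(1)}_\ell)=H_{\ell'}(\mu^x)+O(1)$ with $\ell'=\ell'(\ell)=\ell-k\log_2 n+O(1)=(1-\gamma)\ell+O(1)\to\infty$. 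Dividing $H_\ell(\mu)=k\log N+H_{\ell'}(\mu^x)+O(1)$ by $\log2^\ell=\ell\log2$ and letting $\ell\to\infty$ — using $k/\ell\to\log2/\log m$, $\ell'/\ell\to1-\gamma=\log(m/n)/\log m$ and $H_{\ell'}(\mu^x)/(\ell'\log2)\to\dim_e\mu^x$ — yields
\[
\dim_e\mu=\frac{\log N}{\log m}+\dim_e\mu^x\cdot\frac{\log(m/n)}{\log m}=\frac{\dim_e\mu^x\,\log\frac{m}{n}+\log N}{\log m}.
\]
Running the same estimate with $\limsup$ and with $\liminf$ shows that $\overline{\dim}_e\mu$ and $\underline{\dim}_e\mu$ are given by the same formula in terms of $\overline{\dim}_e\mu^x$ and $\underline{\dim}_e\mu^x$, so the hypothesis that $\dim_e\mu^x$ exists is precisely what makes the stated equality literal.

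I expect the main obstacle to be the geometric comparison $|H(\mu,\mathcal{R}_\ell)-H_\ell(\mu)|=O(1)$ — proving, with a constant uniform in $\ell$, that the approximate-square partition and the dyadic grid mutually refine each other with bounded multiplicity. This is exactly where the \emph{rectangular} shape in the ROSC is needed: it is what forces the bounding boxes $A^{-k}\overline{T}+v_{\mathbf{b}}$ to be genuinely disjoint $\asymp n^{-k}\times m^{-k}$ rectangles, so that at the scale $m^{-k}\asymp 2^{-\ell}$ only boundedly many of them can crowd into a single dyadic cell; and, together with the diagonal form of $A$ (which decouples the scalings $n^{-k}$ and $m^{-k}$), it is what makes the residual refinement exactly an $x$-refinement governed by $\mu^x$. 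Without these features the overlap multiplicities, and with them the error terms, would grow with $\ell$ and the formula would collapse.
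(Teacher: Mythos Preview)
Your argument is correct. The approximate-square technique you use --- choosing $k=\lfloor \ell\log 2/\log m\rfloor$ so that level-$k$ cylinders have height $\asymp 2^{-\ell}$, refining them by vertical dyadic strips to obtain a partition $\mathcal{R}_\ell$ whose cells are comparable to $2^{-\ell}$-squares, and reading the conditional entropy $H(\mu,\mathcal{R}_\ell\mid\mathcal{Q}_k)$ as the entropy of a rescaled copy of $\mu^x$ at scale $\ell'\approx(1-\gamma)\ell$ --- is exactly the right one, and your identification of the bounded-multiplicity comparison $|H(\mu,\mathcal{R}_\ell)-H_\ell(\mu)|=O(1)$ as the place where the rectangular OSC is genuinely used is accurate.

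Note, however, that the paper does not actually prove this lemma: it simply quotes the result from \cite[Theorem~4]{FW05} (Feng--Wang). The proof in that reference is precisely the approximate-square/Ledrappier--Young type argument you have outlined. So your proposal is not so much a different route as a supply of the proof the paper defers to the literature; there is nothing to compare against in the paper itself.
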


\begin{lemma}\label{leentrow}
Let $\mu$ be the self-similar measure associated with an IFS $\{r_ix+t_i\}_{1\leq i\leq m}$ and probability weight $\{p_i\}_{1\leq i\leq m}$. Under the OSC for the IFS, we have that
\[
\hdim \mu=\frac{\sum_{i=1}^mp_i\log p_i}{\sum_{i=1}^mp_i\log r_i},
\]
where $\hdim\mu$ denotes the Hausdorff dimension of $\mu$.
\end{lemma}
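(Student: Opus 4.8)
The plan is to prove the formula by the standard symbolic-coding approach, realizing $\mu$ as the distribution of a random point whose address is driven by the Bernoulli measure with weights $(p_1,\dots,p_m)$, and then identifying the local dimension of $\mu$ almost everywhere. Write $f_i(x)=r_ix+t_i$, let $K$ be the attractor, and let $\pi\colon\{1,\dots,m\}^{\N}\to K$ be the coding map $\pi(\omega)=\lim_{n\to\infty}f_{\omega_1}\circ\cdots\circ f_{\omega_n}(0)$. For a finite word $\mathbf{i}=i_1\cdots i_n$ put $f_{\mathbf{i}}=f_{i_1}\circ\cdots\circ f_{i_n}$, $r_{\mathbf{i}}=r_{i_1}\cdots r_{i_n}$, and $p_{\mathbf{i}}=p_{i_1}\cdots p_{i_n}$. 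With $\mathbb{P}=(p_1,\dots,p_m)^{\N}$ one has $\mu=\mathbb{P}\circ\pi^{-1}$, the cylinder $f_{\mathbf{i}}(K)$ carries $\mu$-mass at least $p_{\mathbf{i}}$, and $\diam f_{\mathbf{i}}(K)=r_{\mathbf{i}}\,\diam K$.

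First I would compute the \emph{symbolic} local dimension using the strong law of large numbers. Under $\mathbb{P}$ the coordinates are i.i.d., so Kolmogorov's SLLN applied to $\log p_{\omega_k}$ and $\log r_{\omega_k}$ gives, for $\mathbb{P}$-a.e.\ $\omega$,
\[
\frac1n\log p_{\omega|_n}\longrightarrow \sum_{i=1}^m p_i\log p_i,\qquad \frac1n\log r_{\omega|_n}\longrightarrow \sum_{i=1}^m p_i\log r_i .
\]
Writing $s=\bigl(\sum_i p_i\log p_i\bigr)\big/\bigl(\sum_i p_i\log r_i\bigr)$ and $C_n(\omega)=f_{\omega|_n}(K)$, it follows that for $\mathbb{P}$-a.e.\ $\omega$ the ratio $\log\mu(C_n(\omega))/\log\diam C_n(\omega)$ tends to $s$, since the additive constant $\log\diam K$ washes out. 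Thus the local dimension measured along the decreasing tower of cylinders equals $s$ almost surely.

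The main obstacle is replacing the cylinders $C_n(\omega)$ by genuine Euclidean balls $B(x,\rho)$, and this is exactly where the open set condition enters. The upper estimate for the local dimension is easy: from $C_n(\omega)\subseteq B\bigl(x,\diam C_n(\omega)\bigr)$ and $\mu(C_n(\omega))\ge p_{\omega|_n}$ one gets $\liminf_{\rho\to0}\log\mu(B(x,\rho))/\log\rho\le s$ for $x=\pi(\omega)$. The reverse inequality is the delicate part. Fixing a bounded open set $U$ realizing the OSC, I would, for each $\rho$, pass to the cutset $\mathcal W(\rho)$ of words $\mathbf{i}$ with $r_{\mathbf{i}}\le\rho<r_{\mathbf{i}^-}$, whose cylinders all have diameter comparable to $\rho$, and invoke the classical Hutchinson packing estimate: because the images $f_i(U)\subseteq U$ are disjoint, any ball of radius $\rho$ meets the closures $f_{\mathbf{i}}(\overline U)$ for at most a bounded number $N$ of words $\mathbf{i}\in\mathcal W(\rho)$, with $N$ independent of $x$ and $\rho$. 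Since the OSC renders the pairwise overlaps of these cylinders $\mu$-null, this bounds $\mu(B(x,\rho))$ by a sum of at most $N$ cutset masses $p_{\mathbf{i}}$. The technical heart is then to control these finitely many competing masses: one must show that for $\mathbb P$-a.e.\ $\omega$ every competitor shares the exponential rate of the cutset word of $\omega$ itself, so that the same SLLN asymptotics give $\log p_{\mathbf{i}}/\log\rho\to s$ uniformly over the $\le N$ words meeting the ball. Combining the bounded count with these rates yields $\liminf_{\rho\to0}\log\mu(B(x,\rho))/\log\rho\ge s$.

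Together the two estimates show that $\mu$ is exact-dimensional with local dimension equal to $s$ for $\mu$-a.e.\ $x$. Finally I would conclude by Billingsley's lemma (the local-dimension characterization of Hausdorff dimension): the lower bound $\hdim\mu\ge s$ follows from the mass distribution principle applied on the full-measure set where the lower local dimension is at least $s$, and the upper bound $\hdim\mu\le s$ from covering a positive-measure set where the lower local dimension is at most $s$. This gives $\hdim\mu=\bigl(\sum_i p_i\log p_i\bigr)\big/\bigl(\sum_i p_i\log r_i\bigr)$, as claimed.
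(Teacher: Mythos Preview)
The paper does not prove this lemma; it is quoted as a classical result of Hutchinson \cite{Hut} (see also \cite{LW}) with no argument given, so there is no in-paper proof to compare your sketch against. Your outline is the standard symbolic approach and has the correct overall structure.

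The one real gap is at what you call the ``technical heart.'' The competing cutset words $\mathbf{i}$ meeting $B(\pi(\omega),\rho)$ are not prefixes of $\omega$, so the SLLN along $\omega$ tells you nothing about $p_{\mathbf{i}}$; the phrase ``the same SLLN asymptotics'' does not apply to them, and a neighbouring cylinder may well carry mass with a different exponential profile. The usual remedy is to restrict first: given $\epsilon>0$, pass (via the SLLN and Egorov, say) to a set $\Omega_0$ with $\mathbb P(\Omega_0)>1-\epsilon$ on which $p_{\omega|_n}\le r_{\omega|_n}^{\,s-\epsilon}$ holds for all large $n$, and set $\mu_0=\mu|_{\pi(\Omega_0)}$. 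Each competitor $\mathbf{i}\in\mathcal W(\rho)$ then either misses $\pi(\Omega_0)$, contributing nothing to $\mu_0$, or contains some $\pi(\omega')$ with $\omega'\in\Omega_0$, which forces $p_{\mathbf{i}}=p_{\omega'|_{|\mathbf{i}|}}\le\rho^{\,s-\epsilon}$. Combined with the bounded-overlap count this gives $\mu_0(B(x,\rho))\le N\rho^{\,s-\epsilon}$, and the mass distribution principle then yields $\hdim E\ge s-\epsilon$ for every set $E$ of full $\mu$-measure. Letting $\epsilon\to0$ gives $\hdim\mu\ge s$ directly, without having to establish exact-dimensionality of $\mu$ itself.
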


\proof[Proof of Proposition  \ref{propeed}] It is easy to check that  $\mu^x$ is the self-similar measure associated with the IFS $\Phi=\{\frac{1}{3q_1}x,\frac{1}{3q_1}x,\frac{1}{3q_1}(x+1)\}$ and probability weight $(\frac{1}{3},\frac{1}{3},\frac{1}{3})$. The system is equivalent to the IFS $\Phi'=\{\frac{1}{3q_1}x,\frac{1}{3q_1}(x+1)\}$ and the probability weight $\left\{\frac{2}{3},\frac{1}{3}\right\}$. Clearly, $\Phi'$ satisfies the OSC and therefore the corresponding measure $\mu^x$ is exact dimensional, see \cite{FH09}. This, combined with Lemma \ref{leentrow} and Theorem 1.1 in \cite{FLR02}, implies that
\begin{equation}\label{eqselen}
\dim_e\mu^x=\hdim\mu^x=\frac{\frac{2}{3}\log \frac{2}{3}+\frac{1}{3}\log \frac{1}{3}}{\frac{2}{3}\log \frac{1}{3q_1}+\frac{1}{3}\log \frac{1}{3q_1}}=\frac{\frac{2}{3}\log \frac{2}{3}+\frac{1}{3}\log \frac{1}{3}}{\log \frac{1}{3q_1}}.
\end{equation}

On the other hand, it is easy to see that $\{\tau_d(x)=A^{-1}(x+d):d\in \mathcal{D}\}$ is a self-affine IFS satisfying the ROSC. Then by Lemma \ref{leentro},
\[
\dim_e\mu_{A,\mathcal{D}}=\frac{-\dim_e \mu^x\cdot\log \frac{3q_2}{3q_1}+\log \frac{1}{3}}{\log \frac{1}{3q_2}}.
\]
Substituting \eqref{eqselen} into the above equation, we obtain $$\dim_e\mu_{A,\mathcal{D}}\in\left(\frac{\log 3}{\log (3q_2)},\frac{\log 3}{\log (3q_1)}\right).$$
\qed

\section{Intermediate value property}

In this section, we will prove Theorem \ref{thexu1}. Recall that in the regular case the set $\tau^*(\Theta_3^\tau)$ can be expressed  as a sequence as follows. For $I=i_1i_2\cdots i_n\in\Theta_3^n\setminus\Theta_3^{n-1}0$, there exists a unique integer
  $$k_I=i_1+i_2 3+\cdots+i_n 3^{n-1}$$
  and on the other hand
\[
\mathbb{Z}\setminus\{0\}=\bigcup_{n=1}^\infty\bigcup_{I\in\Theta_3^n\setminus\Theta_3^{n-1}0} k_I.
\]
Define $\lambda_0=\textbf{0}$ and $\lambda_k=\tau^*(i_1i_2\cdots i_n0^\infty)$ if
$$k=i_1+i_2 3+\cdots+i_n 3^{n-1}\quad \text{with $i_j\in\Theta_3$ and $i_n\neq0$}.$$
Then $\tau^*(\Theta_3^\tau)=\{\lambda_k\}_{k\in\mathbb{Z}}$.

Let $M=\{m_k\}_{k=-\infty}^\infty$ be a sequence of non-negative integers. We define a mapping $\tau$ as follows: $\tau(0^i)=\textbf{0}$ for $i\ge 1$; for $I=i_1i_2\cdots i_n\in\Theta_3^n$ with $I\neq 0^n$, $\tau(I)=i_n\begin{pmatrix}
		q_1 \\
		-q_2 \\
	\end{pmatrix}$, and
\[
	\tau(I0^l)=
	\begin{cases}
		\textbf{0},& \text{if $l\not=m_k;$}\\
		\begin{pmatrix}
		\frac{q_1}{4} \\
		\frac{-q_2}{4} \\
	\end{pmatrix},& \text{if $l=m_k.$}
	\end{cases}
\]
Define $\lambda_0=\textbf{0}$ and
\[
\lambda_k=\sum_{j=1}^{n} A^{j-1}\tau(i_1\cdots i_j)+A^{n+m_k-1}\begin{pmatrix}
		\frac{q_1}{4} \\
		\frac{-q_2}{4} \\
	\end{pmatrix}\delta_{m_k},
\]
where
	\begin{equation*}
	\delta_{m_k}=
	\begin{cases}
		0,& \text{if $ ~m_k=0;$}\\
		1,& \text{if $ ~m_k\geq 1.$}
	\end{cases}
\end{equation*}
Then, it is easy to see that  $\tau$ is a maximal tree mapping and $\ell_n:=\#\{k:\tau(I0^l)\neq \textbf{0},l\geq1\}\leq 1$ for all $n\geq1$. By Theorem \ref{ppd112}, we have that $\Lambda(\{m_k\}):=\{\lambda_k\}_{k=-\infty}^\infty$ is a spectrum of $\mu_{A,\mathcal{D}}.$

When $m_n=0$ for all $n\geq1$, it is easy to see that
\begin{equation}\label{maxL}
\Lambda_{\max}:=\Lambda(\{m_k\})=\bigcup_{k=1}^\infty \left\{\sum_{i=1}^k A^{i-1}l_i:~\text{$l_i\in \mathcal{L}$ for $1\le i\le k$}\right\}.
\end{equation}
It follows from Theorem 1.4 in \cite{LWu322} that
\[
\dim_{Be} \Lambda_{\max}=\frac{\log 3}{\log 3q_2}.
\]

Before proceeding with the proof, we introduce an equivalent definition of the Beurling dimension of a set and the dimensional formulas for some special discrete sets, which will be used in the latter.
\begin{lemma}[\cite{CKS08}]\label{eqbe}
Let $\Lambda\subseteq \mathbb{R}^d$ be a countable set. Then
\begin{equation*}
\dim_{Be}(\Lambda)=\limsup\limits_{h\to \infty}\sup_{x\in \mathbb{R}^d}\frac{\log \#(\Lambda \cap B(x,h))}{\log h}.
\end{equation*}
\end{lemma}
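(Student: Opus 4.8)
The plan is to reduce the claimed equality to an elementary fact about the growth rate of a single function. Write $N(x,h)=\#(\Lambda\cap B(x,h))$ and set $S(h)=\sup_{x\in\mathbb{R}^d}N(x,h)$. First I would observe that, since $\log$ is continuous and increasing and $\log h>0$ for $h>1$, the constant $\log h$ pulls out of the supremum over $x$ and the logarithm commutes with the supremum, giving
\[
\sup_{x\in\mathbb{R}^d}\frac{\log N(x,h)}{\log h}=\frac{\log S(h)}{\log h}.
\]
Hence the right-hand side of the lemma equals $\beta:=\limsup_{h\to\infty}\frac{\log S(h)}{\log h}$, while by definition $D_r^+(\Lambda)=\limsup_{h\to\infty}S(h)/h^r$. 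Thus the lemma amounts to proving $\beta=\inf\{r:D_r^+(\Lambda)=0\}=\sup\{r:D_r^+(\Lambda)=\infty\}$, and I would establish the two inequalities separately. (If $S(h)=\infty$ for arbitrarily large $h$, both sides are $+\infty$ and there is nothing to prove, so I may assume each $S(h)$ is finite.)

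For the upper bound $\inf\{r:D_r^+(\Lambda)=0\}\le\beta$, I would fix any $r>\beta$, choose an intermediate exponent $r'$ with $\beta<r'<r$, and use the definition of $\limsup$ to find $h_0$ so that $\log S(h)/\log h<r'$, i.e. $S(h)<h^{r'}$, for all $h>h_0$. Then $S(h)/h^r<h^{r'-r}\to 0$ as $h\to\infty$, whence $D_r^+(\Lambda)=0$. Since this holds for every $r>\beta$, the infimum characterization yields $\dim_{Be}(\Lambda)\le\beta$.

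For the lower bound $\sup\{r:D_r^+(\Lambda)=\infty\}\ge\beta$, I would fix any $r<\beta$, pick $r'$ with $r<r'<\beta$, and use that $\beta=\limsup_{h\to\infty}\log S(h)/\log h>r'$ to extract a sequence $h_n\to\infty$ with $\log S(h_n)/\log h_n>r'$, i.e. $S(h_n)>h_n^{r'}$. Then $S(h_n)/h_n^r>h_n^{r'-r}\to\infty$, so $D_r^+(\Lambda)=\limsup_{h\to\infty}S(h)/h^r=\infty$. Since this holds for every $r<\beta$, the supremum characterization gives $\dim_{Be}(\Lambda)\ge\beta$. Combining the two bounds with the (definitional) equality of the inf- and sup-descriptions of $\dim_{Be}(\Lambda)$ yields $\dim_{Be}(\Lambda)=\beta$, which is exactly the asserted formula.

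The argument is entirely routine; the only points requiring a little care are the interchange of $\sup_{x}$ with $\log$ (together with the degenerate case in which $S(h)$ is infinite), and the correct extraction of a subsequence when unwinding the $\limsup$ in the lower bound. I do not anticipate any genuine obstacle, since this is the standard identification of a critical growth exponent with the associated $\limsup$ of logarithmic ratios.
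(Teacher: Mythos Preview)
Your argument is correct. The paper itself does not prove this lemma; it is quoted verbatim from \cite{CKS08} and used without proof. What you have written is the standard elementary derivation: once one rewrites the right-hand side as $\limsup_{h\to\infty}\frac{\log S(h)}{\log h}$ with $S(h)=\sup_{x}\#(\Lambda\cap B(x,h))$, the identification of the Beurling dimension with this critical exponent is exactly the routine ``$r>\beta\Rightarrow D_r^+=0$'' and ``$r<\beta\Rightarrow D_r^+=\infty$'' splitting that you carry out. The only delicate points---pulling $\log$ through the supremum, and handling the degenerate case $S(h)=\infty$---you have already flagged, and they cause no trouble. So your proposal supplies a clean self-contained proof where the paper simply invokes the reference; nothing further is needed.
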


\begin{lemma}\label{leonele}
Let $b\geq 2$ be an integer. Let $D\subseteq\mathbb{Z}$ be a  digit set with $D\subseteq\{-\big[\frac{b}{2}\big],-\big[\frac{b}{2}\big]+1,\ldots,b-1-\big[\frac{b}{2}\big]\}$. Define
\[
\Lambda(b,D):=\bigcup_{k=1}^\infty \left\{\sum_{i=1}^k b^{i-1}d_i:~\text{$d_i\in D_i$ for $1\le i\le k$}\right\},
\]
where $D_i=D$ or $D_i=\{0\}$ for any $i\geq 1$.
Then
\[
\dim_{Be}\Lambda(b,D)=\limsup\limits_{n\to \infty}\frac{\#\{i: D_i=D, 1\leq i\leq n\}}{n}\cdot\frac{\log\#D}{\log b}.
\]
\end{lemma}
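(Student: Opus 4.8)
The plan is to work with the equivalent formulation of the Beurling dimension in Lemma~\ref{eqbe}, which reduces the problem to estimating $\#(\Lambda(b,D)\cap B(x,h))$ (an interval $(x-h,x+h)$ on the line) and comparing $\log\#(\Lambda(b,D)\cap B(x,h))$ with $\log h$. Write $P=\{i\ge 1:D_i=D\}$ for the set of ``active'' positions, list $P=\{j_1<j_2<\cdots\}$, set $\epsilon_n=\#(P\cap\{1,\dots,n\})$, and put $t=\limsup_{n\to\infty}\frac{\epsilon_n}{n}\cdot\frac{\log\#D}{\log b}$, which is the claimed right-hand side. If $P$ is finite, then $\Lambda(b,D)$ is a finite set and both sides vanish, so one may assume $P$ infinite. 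A key preliminary remark is that, since $D$ lies inside a complete residue system modulo $b$, the digit map $(d_i)\mapsto\sum_i b^{i-1}d_i$ is injective (this is the uniqueness in Lemma~\ref{exg}, and also follows from a direct valuation argument); consequently the $N$-th set in the union defining $\Lambda(b,D)$ has exactly $\prod_{i=1}^{N}\#D_i=(\#D)^{\epsilon_N}$ elements, each of absolute value at most $\frac{b}{2}\sum_{i=1}^{N}b^{i-1}<b^{N}$ (using $|d_i|\le b/2$).

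For the upper bound, fix large $h$ and let $k$ be the integer with $b^{k-1}\le h<b^{k}$. Given $y=\sum_{i=1}^{N}b^{i-1}d_i\in\Lambda(b,D)$ (with $N\ge k$, padding by zeros if needed), split it as $y=y'+y''$ with $y'=\sum_{i\le k}b^{i-1}d_i$ and $y''=\sum_{i>k}b^{i-1}d_i$. Then $|y'|<b^{k}$, while $y''$ is an integer multiple of $b^{k}$. If $y\in B(x,h)$ then $|y''-x|\le|y-x|+|y'|<2b^{k}$, so $y''$ lies in an interval of length $4b^{k}$ and takes at most $5$ values; once $y''$ is fixed, $y$ is determined by its head $y'$, and the heads range over a set of at most $(k+1)(\#D)^{\epsilon_k}$ integers (the union over $r\le\epsilon_k$ of the possible level-$r$ heads). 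Hence $\#(\Lambda(b,D)\cap B(x,h))\le 5(k+1)(\#D)^{\epsilon_k}$; taking logarithms, dividing by $\log h\ge(k-1)\log b$, letting $h\to\infty$ (so $k\to\infty$), and using $\limsup_k\frac{\epsilon_k\log\#D}{k\log b}=t$ together with $\frac{\log\big(5(k+1)\big)}{\log h}\to 0$, one gets $\dim_{Be}\Lambda(b,D)\le t$.

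For the lower bound, pick $k_j\to\infty$ realizing the limsup, i.e.\ $\frac{\epsilon_{k_j}}{k_j}\cdot\frac{\log\#D}{\log b}\to t$, and set $h_j=b^{k_j}$. The $k_j$-th set in the union defining $\Lambda(b,D)$ consists of $(\#D)^{\epsilon_{k_j}}$ distinct integers, each of absolute value $<b^{k_j}=h_j$, so $\#(\Lambda(b,D)\cap B(0,h_j))\ge(\#D)^{\epsilon_{k_j}}$ and $\frac{\log\#(\Lambda(b,D)\cap B(0,h_j))}{\log h_j}\ge\frac{\epsilon_{k_j}\log\#D}{k_j\log b}\to t$. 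By Lemma~\ref{eqbe} this gives $\dim_{Be}\Lambda(b,D)\ge t$, and combined with the upper bound it yields the claimed identity.

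I expect the main (and only mildly delicate) point to be the head/tail decomposition in the upper bound: one must count the head values correctly when $0\notin D$, where the successive ``levels'' of $\Lambda(b,D)$ need not be nested, and one must check that the combinatorial prefactors ($5$ and $k+1$) are genuinely sub-polynomial in $h$ and so do not perturb the value of the limsup. Everything else — injectivity of the digit map, the bound $|y'|<b^{k}$, and counting multiples of $b^{k}$ in a bounded interval — is elementary.
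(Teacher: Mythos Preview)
Your proof is correct and follows essentially the same strategy as the paper: invoke Lemma~\ref{eqbe} and bound $\#(\Lambda(b,D)\cap B(x,h))$ above and below via the digit structure, then take the $\limsup$. The lower bounds are virtually identical (count points of the $k$-th level inside $B(0,\text{const}\cdot b^{k})$ along a subsequence realizing the $\limsup$). For the upper bound the paper is terser: it simply asserts $\#(\Lambda(b,D)\cap B(x,h))\le 2(\#D)^{(\overline d+\epsilon)(n+1)}$ for $b^{n}\le h\le b^{n+1}$ and large $n$, whereas you supply the mechanism behind such a bound explicitly via the head/tail split $y=y'+y''$ and the observation that $y''\in b^{k}\mathbb{Z}$ takes $O(1)$ values in any interval of length $O(b^{k})$. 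Your extra care with the factor $(k+1)$ to handle the case $0\notin D$ (non-nested levels) is a detail the paper glosses over; it does not change the outcome since it is sub-polynomial in $h$.
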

\begin{proof}
Write $\overline{d}=\limsup\limits_{n\to \infty}\frac{\#\{i: D_i=D, 1\leq i\leq n\}}{n}$. Fix $\epsilon>0$. Note that for all $x\in\mathbb{R}$ and sufficiently large $h$, there exists sufficiently large $n\in\N$ such that $b^n\leq h\leq b^{n+1}$, and then
\[
\#(\Lambda(b,D)\cap B(x,h))\leq 2\cdot(\#D)^{(\overline{d}+\epsilon)\cdot(n+1)}.
\]
Hence,
\[
\frac{\log \#(\Lambda(b,D)\cap B(x,h))}{\log h}\leq\frac{\log(2\cdot(\#D)^{(\overline{d}+\epsilon)\cdot (n+1)})}{\log(b^n)}.
\]
 By Lemma \ref{eqbe}, we have that $\dim_{Be}\Lambda(b,D)\leq \overline{d}\cdot \frac{\log \#D}{\log b}$.\\
On the other hand, let
\[
\Lambda_k=\bigcup_{l=1}^k \left\{\sum_{i=1}^l b^{i-1}d_i:~\text{$d_i\in D_i$ for $1\le i\le l$}\right\},~~k\geq 1.
\]
Then $\Lambda(b, D)=\cup_{k=1}^\infty \Lambda_k$. If we write $c=\frac{1}{b-1}\max_{d\in D}|d|$ and $c_k=\frac{\#\{i: D_i=D, 1\leq i\leq k\}}{k}$, then, for sufficiently large $k$, $\Lambda_k\subseteq B(0,cb^k)$ and hence
\[
\begin{split}
\dim_{Be}(\Lambda(b,D))&=\limsup\limits_{h\to \infty}\sup_{x\in \mathbb{R}}\frac{\log \#(\Lambda(b, D)\cap B(x,h))}{\log h}\\
                  &\geq \limsup\limits_{k\to \infty}\frac{\log \#(\Lambda_k \cap B(0,cb^k))}{\log cb^k}\\
                  &\geq \limsup\limits_{k\to \infty}\frac{\log \#(\Lambda_k \cap B(0,cb^k))}{\log cb^k}\\
                 &\geq \limsup\limits_{k\to \infty}\frac{\log \#(\Lambda_k)}{\log cb^k}\\
                  &= \limsup\limits_{k\to \infty}\frac{\log (\#D)^{k\cdot c_k}}{\log cb^k}\\
		          &=\overline{d}\cdot \frac{\log \#D}{\log b},
\end{split}
\]
where the second equality holds because $D\subseteq\{-\big[\frac{b}{2}\big],-\big[\frac{b}{2}\big]+1,\ldots,b-1-\big[\frac{b}{2}\big]\}$. Hence,  $\dim_{Be}(\Lambda(b, D))=\overline{d}\cdot\frac{\log \#D}{\log b}$.
\end{proof}

With the help of Lemma \ref{leonele}, we obtain the following similar dimensional formula on $\mathbb{R}^2.$

\begin{lemma}\label{leseag}
Let $R=\begin{pmatrix}
		a & 0 \\
		0 & b \\
	\end{pmatrix}$ with $1<a\leq b$ and $a,b\in\mathbb{N}$. Let $B\subseteq \mathbb{Z}^2$ be a finite digit set with $\pi_y(B)\subseteq\{-\big[\frac{b}{2}\big],-\big[\frac{b}{2}\big]+1,\ldots,b-1-\big[\frac{b}{2}\big]\}$. Define
\[
\Lambda(R,\{B_i\}):=\bigcup_{k=1}^\infty \left\{\sum_{i=1}^k R^{i-1}b_i:~\text{$b_i\in B_i$ for $1\le i\le k$}\right\},
\]
where $B_i=B$ or $B_i=\{\textbf{0}\}$. If $\Lambda(R,\{B_i\})$ has at most one point on every vertical line, then
\[
\dim_{Be}\Lambda(R,\{B_i\})=\limsup\limits_{n\to \infty}\frac{\#\{i: B_i=B, 1\leq i\leq n\}}{n}\cdot\frac{\log \#B}{\log b}.
\]
\end{lemma}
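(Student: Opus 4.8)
The plan is to reduce the two‑dimensional statement to the one‑dimensional Lemma \ref{leonele} by projecting onto the $y$‑axis, exactly as in the proof of Theorem \ref{thdddll}. Write $\overline{d}=\limsup_{n\to\infty}\frac{\#\{i:B_i=B,\ 1\le i\le n\}}{n}$. First I would establish the upper bound $\dim_{Be}\Lambda(R,\{B_i\})\le \overline{d}\cdot\frac{\log\#B}{\log b}$. Since $R$ is diagonal, $\pi_y(R^{i-1}b_i)=b^{i-1}\pi_y(b_i)$, so $\pi_y(\Lambda(R,\{B_i\}))\subseteq\Lambda(b,\pi_y(B),\{\pi_y(B_i)\})$, a set of exactly the type covered by Lemma \ref{leonele} (with digit set $\pi_y(B)\subseteq\{-[\tfrac{b}{2}],\dots,b-1-[\tfrac{b}{2}]\}$ and $\pi_y(B_i)=\pi_y(B)$ or $\{0\}$). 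For any $x=(x^{(1)},x^{(2)})$ and $h>0$, the ball $B(x,h)\subseteq\R^2$ projects into the interval $B(x^{(2)},h)\subseteq\R$, hence $\#(\Lambda(R,\{B_i\})\cap B(x,h))\le \#(\pi_y(\Lambda(R,\{B_i\}))\cap B(x^{(2)},h))$. Taking $\log$, dividing by $\log h$, and applying Lemma \ref{eqbe} together with Lemma \ref{leonele} gives $\dim_{Be}\Lambda(R,\{B_i\})\le \dim_{Be}\Lambda(b,\pi_y(B),\{\pi_y(B_i)\})\le \overline{d}\cdot\frac{\log\#(\pi_y(B))}{\log b}\le \overline{d}\cdot\frac{\log\#B}{\log b}$.

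Next I would prove the matching lower bound. Here is where the hypothesis that $\Lambda(R,\{B_i\})$ has at most one point on every vertical line is essential: it forces the map $b_i\mapsto\pi_y(b_i)$ to be injective at every level, so $\#(\pi_y(B))=\#B$ (and more precisely the truncations $\Lambda_k(R,\{B_i\})=\bigcup_{l=1}^{k}\{\sum_{i=1}^l R^{i-1}b_i\}$ satisfy $\#\Lambda_k(R,\{B_i\})=\#\pi_y(\Lambda_k(R,\{B_i\}))$). Thus all the cardinalities appearing in the lower‑bound chain of Lemma \ref{leonele} are preserved. Concretely, let $c=\frac{1}{b-1}\max_{b'\in B}\|b'\|_\infty$ and $c_k=\frac{\#\{i:B_i=B,\ 1\le i\le k\}}{k}$; then for large $k$, $\Lambda_k(R,\{B_i\})\subseteq B(\mathbf 0,cb^k)$ (using $a\le b$ so that $R^{i-1}$ expands entries by at most $b^{i-1}$), and $\#\Lambda_k(R,\{B_i\})=(\#B)^{k c_k}$ by the no‑two‑points‑on‑a‑vertical‑line condition. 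Feeding this into Lemma \ref{eqbe},
\[
\dim_{Be}\Lambda(R,\{B_i\})\ge\limsup_{k\to\infty}\frac{\log\#\Lambda_k(R,\{B_i\})}{\log cb^k}=\limsup_{k\to\infty}\frac{\log(\#B)^{k c_k}}{\log cb^k}=\overline{d}\cdot\frac{\log\#B}{\log b},
\]
which combined with the upper bound finishes the proof.

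The main obstacle is the lower bound, and specifically making sure the vertical‑line hypothesis is used correctly: without it the projection $\pi_y$ could collapse distinct points of $\Lambda(R,\{B_i\})$, the equality $\#\Lambda_k(R,\{B_i\})=(\#B)^{kc_k}$ would fail, and one would only get an inequality $\dim_{Be}\le\overline{d}\cdot\frac{\log\#\pi_y(B)}{\log b}$, which is in general strictly smaller than the claimed value. The other small technical point to get right is the containment $\Lambda_k(R,\{B_i\})\subseteq B(\mathbf 0,cb^k)$; since $R$ is diagonal with both diagonal entries in $[a,b]\subseteq[2,b]$, each coordinate of $\sum_{i=1}^k R^{i-1}b_i$ is bounded by $\sum_{i=1}^k b^{i-1}\max\|b'\|_\infty\le c\,b^k$, so the containment holds with room to spare. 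Everything else is a direct transcription of the one‑dimensional argument in Lemma \ref{leonele}.
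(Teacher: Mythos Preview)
Your proof is correct and follows essentially the same route as the paper: project onto the $y$-axis to reduce to Lemma~\ref{leonele} for the upper bound, and count points in the truncations $\Lambda_k\subseteq B(\mathbf 0,cb^k)$ for the lower bound. One small correction to your commentary: the vertical-line hypothesis is not used \emph{only} in the lower bound---your inequality $\#(\Lambda(R,\{B_i\})\cap B(x,h))\le \#(\pi_y(\Lambda(R,\{B_i\}))\cap B(x^{(2)},h))$ already requires $\pi_y$ to be injective on $\Lambda(R,\{B_i\})$, which is exactly that hypothesis (and the paper invokes it there as well). Since the hypothesis is in force throughout, this does not affect the validity of your argument.
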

\begin{proof}
Since $\Lambda(R,\{B_i\})$ has at most one point on every vertical line, we have
\[
\#(\Lambda(R,\{B_i\})\cap B(x,h))\le \#(\pi_y(\Lambda(R,\{B_i\}))\cap B(x^{(2)},h))
\]
for any $h>0$ and $x=\begin{pmatrix}
		x^{(1)}\\
		x^{(2)}\\
	\end{pmatrix}\in\mathbb{R}^2$. Similarly as in the proof of Theorem \ref{thdddll}, we have that
\[
\dim_{Be}\Lambda(R,\{B_i\})\leq \dim_{Be}\pi_y(\Lambda(R,\{B_i\})).
\]
Note that
\[
\pi_y(\Lambda(R,\{B_i\}))=\bigcup_{k=1}^\infty \left\{\sum_{i=1}^k b^{i-1}d_i:~\text{$d_i\in \pi_y(B)$ for $1\le i\le k$}\right\}.
\]
By the condition that $\Lambda(R,\{B_i\})$ has at most one point on every vertical line again, we have that $\#\pi_y(B)=\#B$. Then by Lemma \ref{leonele}, we have
\[
\dim\pi_y(\Lambda(R,\{B_i\}))=\limsup\limits_{n\to \infty}\frac{\#\{i: B_i=B, 1\leq i\leq n\}}{n}\cdot\frac{\log \#B}{\log b}.
\]
On the other hand, similarly as in Lemma \ref{leonele}, write $\overline{b}=\limsup\limits_{n\to \infty}e_n$, $e_n=\frac{\#\{i: B_i=B, 1\leq i\leq n\}}{n}$ and
\[
\Lambda_k=\bigcup_{l=1}^k \left\{\sum_{i=1}^l R^{i-1}b_i:~\text{$b_i\in B_i$ for $1\le i\le l$}\right\}, ~~k\geq 1.
\]
Then for sufficiently large $k$, $\Lambda_k\subseteq B(\textbf{0},cb^k)$ for some $c>0$ and thus
\[
\begin{split}
\dim_{Be}(\Lambda(R,\{B_i\}))&=\limsup\limits_{h\to \infty}\sup_{x\in \mathbb{R}^2}\frac{\log \#(\Lambda(R,\{B_i\})\cap B(x,h))}{\log h}\\
                             &\geq \limsup\limits_{k\to \infty}\frac{\log \#(\Lambda(R,\{B_i\})\cap B(\textbf{0},cb^k))}{\log cb^k}\\
		               &\geq \limsup\limits_{k\to \infty}\frac{\log (\#B)^{k\cdot e_k}}{\log cb^k}\\
                       &=\overline{b}\cdot \frac{\log \#B}{\log b}.
\end{split}
\]
Hence, $\dim_{Be}(\Lambda(R,\{B_i\}))=\overline{b}\cdot\frac{\log \#B}{\log b}$.
\end{proof}

Now we proceed with the proof of Theorem \ref{thexu1}.
\begin{prop}\label{indt}
For any $t\in\Big[0,\frac{\log 3}{\log 3q_2}\Big]$, there exists a subset $F_t\subseteq\Lambda_{\max}$ (recall that $\Lambda_{\max}$ is defined by \eqref{maxL}) such that
\[
\dim_{Be} F_t=t.
\]
\end{prop}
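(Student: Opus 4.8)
The plan is to realize $F_t$ as one of the "thinned" sets $\Lambda(A,\{B_i\})$ appearing in Lemma~\ref{leseag}, with the density of active digit blocks tuned so that the resulting Beurling dimension is exactly $t$. Set $d:=t\cdot\frac{\log 3q_2}{\log 3}\in[0,1]$. The first thing to check is that Lemma~\ref{leseag} is applicable with $R=A$, $b=3q_2$ and $B=\mathcal L$: one needs $\pi_y(\mathcal L)=\{0,q_2,-q_2\}\subseteq\{-[\tfrac{3q_2}{2}],-[\tfrac{3q_2}{2}]+1,\dots,3q_2-1-[\tfrac{3q_2}{2}]\}$, which follows from a short computation splitting on the parity of $q_2$ (using the Assumption $m=3q_2$, $q_2\ge 1$); one also needs the "at most one point on every vertical line" hypothesis. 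For the latter I would argue once and for all that whenever $B_i\in\{\mathcal L,\{\mathbf 0\}\}$, the set $\Lambda(A,\{B_i\})$ is a \emph{subset} of $\Lambda_{\max}$ (because $\{\mathbf 0\}\subseteq\mathcal L$), and $\Lambda_{\max}$ is a spectrum, hence an orthogonal set, of $\mu_{A,\mathcal D}$; so by Proposition~\ref{mulp} it, and therefore $\Lambda(A,\{B_i\})$, meets each vertical line in at most one point. With these two points in place, Lemma~\ref{leseag} gives
\[
\dim_{Be}\Lambda(A,\{B_i\})=\Big(\limsup_{n\to\infty}\tfrac1n\#\{1\le i\le n:B_i=\mathcal L\}\Big)\cdot\frac{\log 3}{\log 3q_2}.
\]

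It then remains to choose the sequence $\{B_i\}_{i\ge1}$ so that the above $\limsup$ equals $d$. I would take $B_i:=\mathcal L$ when $\lfloor di\rfloor>\lfloor d(i-1)\rfloor$ and $B_i:=\{\mathbf 0\}$ otherwise; telescoping gives $\#\{1\le i\le n:B_i=\mathcal L\}=\lfloor dn\rfloor$, so $\tfrac1n\#\{\cdots\}\to d$ and the $\limsup$ is $d$. Putting $F_t:=\Lambda(A,\{B_i\})$, we get $F_t\subseteq\Lambda_{\max}$ and $\dim_{Be}F_t=d\cdot\frac{\log 3}{\log 3q_2}=t$, which is the assertion. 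The case $t=0$ is degenerate in this recipe (it yields $F_0=\{\mathbf 0\}$); if an infinite witness is wanted one instead declares $B_i=\mathcal L$ precisely when $i$ is a power of $2$, which still has zero upper density and hence produces an infinite $F_0\subseteq\Lambda_{\max}$ with $\dim_{Be}F_0=0$.

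As for difficulty: all the genuine analytic content has already been packaged into Lemmas~\ref{leonele} and~\ref{leseag}, so the proof is mostly bookkeeping, and the "main obstacle" is merely making sure the hypotheses of Lemma~\ref{leseag} really hold for $F_t$. The one subtle point not to overlook is that Lemma~\ref{leseag} demands that $\Lambda(R,\{B_i\})$ itself — not just $\Lambda_{\max}$ — meet vertical lines at most once; this is exactly why it matters that $F_t$ is a \emph{subset} of an orthogonal set and hence inherits the single-point property via Proposition~\ref{mulp}. The only mildly delicate elementary step is the parity-case verification of the digit-range inclusion $\pi_y(\mathcal L)\subseteq[-\tfrac{3q_2}{2},\tfrac{3q_2}{2})\cap\mathbb Z$, and the trivial but necessary observation that a prescribed number in $[0,1]$ is attainable as the upper density of the $1$'s of a $0$–$1$ sequence, handled by the floor construction above.
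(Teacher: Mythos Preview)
Your proposal is correct and follows exactly the paper's approach: both construct $F_t=\Lambda(A,\{D_i\})\subseteq\Lambda_{\max}$ by choosing each $D_i\in\{\mathcal L,\{\mathbf 0\}\}$ so that the upper density of the indices with $D_i=\mathcal L$ equals $t\cdot\frac{\log 3q_2}{\log 3}$, and then invoke Lemma~\ref{leseag}. You are in fact more careful than the paper, explicitly verifying the digit-range and vertical-line hypotheses of Lemma~\ref{leseag} (via Proposition~\ref{mulp}) and giving a concrete density-realizing sequence; note also that the paper's displayed target density $t\cdot\frac{\log(3q_1)}{\log 3}$ is a typo for $t\cdot\frac{\log(3q_2)}{\log 3}$, which you have correctly.
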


\begin{proof}

 Choose a sequence $\{D_i\}$ such that $D_i=\mathcal{L}$ or $D_i=\{\textbf{0}\}$ and
\[
\limsup\limits_{n\to \infty}\frac{\#\{i: D_i=\mathcal{L}, 1\leq i\leq n\}}{n}=t\cdot \frac{\log (3q_1)}{\log 3}.
\]
Then the corresponding set $\Lambda(A,\{D_i\})\subseteq \Lambda_{\max}$  satisfies that $$\dim_{Be}(\Lambda(R,\{D_i\}))=t$$ due to Lemma \ref{leseag}.
\end{proof}

Write $F_t=\{\lambda_k\}_{k\in\Gamma_t}$. Choose $m_k=k^2$ for all $k\notin \Gamma_t$ and $m_k=0$ for otherwise. For this sequence $\{m_k\}$, define $\Lambda_t:=\Lambda(\{m_k\})$. Then we can write
\[\begin{split}\Lambda_t&=\{\lambda_k\}_{k\in \Gamma_t}\cup\{\lambda_k\}_{k\notin \Gamma_t}\\
	&=:F_t\cup\Lambda_t'.
\end{split}	\]

We have known that $\Lambda_t$ is a spectrum of $\mu_{A,\mathcal{D}}$ by the previous arguments. So, we next need to prove that $\dim_{Be}\Lambda_t=t$. To do it, we need some basic properties of Beurling dimension.
\begin{lemma}[\cite{CKS08}]\label{inc}
	Let $\Lambda_1,\Lambda_2\subseteq \mathbb{R}^2$ be two countable sets. Then the following statements hold:
\begin{enumerate}
\item [\textup{(i)}] Monotonicity: If $\Lambda_1\subseteq \Lambda_2$, then
\[
\dim_{Be}\Lambda_1\leq \dim_{Be}\Lambda_2;
\]
\item [\textup{(ii)}] Finite stability:
\[
\dim_{Be}(\Lambda_1\cup \Lambda_2)=\max(\dim_{Be}\Lambda_1,\dim_{Be}\Lambda_2).
\]
\end{enumerate}
	
\end{lemma}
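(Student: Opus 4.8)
The plan is to argue straight from the definition of the upper $r$-Beurling density, $D_r^+(\Lambda)=\limsup_{h\to\infty}\sup_{x\in\mathbb{R}^2}h^{-r}\#(\Lambda\cap B(x,h))$, together with the identity $\dim_{Be}(\Lambda)=\inf\{r:D_r^+(\Lambda)=0\}$. Both assertions then reduce to the obvious monotonicity and finite subadditivity of the set function $\Lambda\mapsto\#(\Lambda\cap B(x,h))$. One could equally well route everything through the reformulation in Lemma \ref{eqbe}, but working with $D_r^+$ keeps the bookkeeping shortest.

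For (i), I would first note that $\Lambda_1\subseteq\Lambda_2$ forces $\#(\Lambda_1\cap B(x,h))\le\#(\Lambda_2\cap B(x,h))$ for all $x\in\mathbb{R}^2$ and all $h>0$, hence $D_r^+(\Lambda_1)\le D_r^+(\Lambda_2)$ for every $r>0$. Therefore $\{r:D_r^+(\Lambda_2)=0\}\subseteq\{r:D_r^+(\Lambda_1)=0\}$, and taking infima over these two sets gives $\dim_{Be}\Lambda_1\le\dim_{Be}\Lambda_2$.

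For (ii), the inequality $\dim_{Be}(\Lambda_1\cup\Lambda_2)\ge\max\{\dim_{Be}\Lambda_1,\dim_{Be}\Lambda_2\}$ is immediate from part (i) applied to the inclusions $\Lambda_i\subseteq\Lambda_1\cup\Lambda_2$. For the reverse inequality I would use the trivial bound
\[
\#\big((\Lambda_1\cup\Lambda_2)\cap B(x,h)\big)\le\#(\Lambda_1\cap B(x,h))+\#(\Lambda_2\cap B(x,h)),
\]
which yields $D_r^+(\Lambda_1\cup\Lambda_2)\le D_r^+(\Lambda_1)+D_r^+(\Lambda_2)$ for each $r>0$. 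Thus if $r>\max\{\dim_{Be}\Lambda_1,\dim_{Be}\Lambda_2\}$ then both terms on the right vanish, so $D_r^+(\Lambda_1\cup\Lambda_2)=0$ and hence $\dim_{Be}(\Lambda_1\cup\Lambda_2)\le r$; letting $r$ decrease to the maximum completes the argument. There is no genuine obstacle here — the one place that needs a word of care is the passage from mere monotonicity to finite subadditivity of $D_r^+$, which is exactly what delivers the upper bound in (ii) — and in any case the statement can be cited verbatim from \cite{CKS08}.
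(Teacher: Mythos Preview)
Your argument is correct: both parts follow directly from the definition of $D_r^+$ via the obvious monotonicity and finite subadditivity of $\Lambda\mapsto\#(\Lambda\cap B(x,h))$, exactly as you lay out. The paper itself gives no proof of this lemma but simply cites it from \cite{CKS08}, so your write-up in fact supplies more than the paper does.
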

By Proposition \ref{indt} and the finite stability of Beurling dimension, we only need to show that $\dim_{Be}\Lambda_t'=0.$ A countable set $\Lambda=\{a_n\}_{n=-\infty}^{\infty}\subseteq \mathbb{R}^2$ is called {\it $b$-lacunary}, if $a_0 = 0$, $|a_1|\geq b$ and for all $n \geq 1$, $|a_{n+1}| \geq b|a_n|$; if $|a_{-1}|\geq b$ and for all $n \geq 1$, $|a_{-n-1}| \geq b|a_{-n}|$.

\begin{lemma}[\cite{AL20}]\label{lac}
	Let $b>1$. If $\Lambda$ is a $b$-lacunary set, then $\dim_{Be}\Lambda=0$.
\end{lemma}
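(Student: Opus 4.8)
The plan is to use the equivalent description of the Beurling dimension from Lemma \ref{eqbe}, namely $\dim_{Be}(\Lambda)=\limsup_{h\to\infty}\sup_{x\in\mathbb{R}^2}\frac{\log\#(\Lambda\cap B(x,h))}{\log h}$, and to show that for a $b$-lacunary set the counting function $\#(\Lambda\cap B(x,h))$ is $O(\log h)$ uniformly in $x$; since $\log(C\log h)/\log h\to 0$, this forces $\dim_{Be}(\Lambda)=0$.

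First I would record the elementary consequence of $b$-lacunarity that $a_0=\textbf{0}$ and $|a_n|\ge b^{|n|}$ for every $n\ne 0$, obtained by a one-line induction on $|n|$ carried out separately for the positive tail (from $|a_1|\ge b$ and $|a_{n+1}|\ge b|a_n|$) and the negative tail (from $|a_{-1}|\ge b$ and $|a_{-n-1}|\ge b|a_{-n}|$).

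The main estimate is then as follows. Fix $x\in\mathbb{R}^2$ and $h>1$, and suppose $a_{n_1},\dots,a_{n_k}$ are distinct points of $\Lambda\cap B(x,h)$ with $1\le n_1<\cdots<n_k$. Iterating $|a_{j+1}|\ge b|a_j|$ gives $|a_{n_k}|\ge b^{\,n_k-n_1}|a_{n_1}|$, so, using the reverse triangle inequality together with $|a_{n_1}|\ge b>1$ and $n_k-n_1\ge k-1$,
\[
2h>|a_{n_k}-a_{n_1}|\ge|a_{n_k}|-|a_{n_1}|\ge(b^{\,n_k-n_1}-1)|a_{n_1}|\ge b^{\,k-1}-1,
\]
whence $k\le 1+\log_b(2h+1)$. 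The identical bound holds for the indices $n\le -1$ with $a_n\in B(x,h)$, and at most one further point ($a_0$) can occur for $n=0$; hence $\#(\Lambda\cap B(x,h))\le 3+2\log_b(2h+1)$ for every $x\in\mathbb{R}^2$ and $h>1$. Substituting this into Lemma \ref{eqbe} yields $\dim_{Be}(\Lambda)\le\limsup_{h\to\infty}\frac{\log\bigl(3+2\log_b(2h+1)\bigr)}{\log h}=0$, and since the Beurling dimension is always nonnegative we conclude $\dim_{Be}(\Lambda)=0$.

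I do not expect a genuine obstacle here. The only points that need a little attention are that $B(x,h)$ is centred at an arbitrary point, so one must argue through the difference $a_{n_k}-a_{n_1}$ rather than through individual norms, and that a two-sided lacunary sequence carries two geometric tails (plus the origin), each of which has to be counted; beyond that the argument is a routine geometric-series estimate.
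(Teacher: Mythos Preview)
Your argument is correct. Note, however, that the paper does not supply its own proof of this lemma: it is quoted from \cite{AL20} and stated without proof. So there is nothing in the paper to compare against; your write-up is a valid self-contained proof of the cited result, using the equivalent logarithmic form of the Beurling dimension (Lemma~\ref{eqbe}) together with the standard geometric-growth estimate for lacunary sequences.
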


\begin{prop}\label{zeroset}
	$\dim_{Be}\Lambda_t'=0$.
\end{prop}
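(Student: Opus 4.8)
The plan is to exhibit $\Lambda_t'$ as a lacunary set together with a finite set, and then conclude via Lemma~\ref{lac} and the monotonicity and finite stability of the Beurling dimension (Lemma~\ref{inc}). The key mechanism is the choice $m_k=k^2$ for $k\notin\Gamma_t$: it makes $\|\lambda_k\|$ grow like $(3q_2)^{k^2}$, i.e. super-exponentially in $|k|$, which is faster than any prescribed geometric rate and so forces lacunarity.

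First I would pin down the size of $\lambda_k$ for $k\notin\Gamma_t$, $k\ne 0$. Writing $k=i_1+i_23+\cdots+i_n3^{n-1}$ with $i_n\ne 0$, so that $3^{\,n-1}\le 2|k|$, and recalling that for the present $\tau$ one has $\tau(i_1\cdots i_j)=i_j\binom{q_1}{-q_2}$ (or $\textbf{0}$), split
\[
\lambda_k=v_k+w_k,\qquad v_k=\sum_{j=1}^{n}A^{j-1}\tau(i_1\cdots i_j),\qquad w_k=A^{n+k^2-1}\binom{q_1/4}{-q_2/4}.
\]
Since $A=\mathrm{diag}(3q_1,3q_2)$ with $3q_1\le 3q_2$, one gets $\|v_k\|\le\sqrt{2}\,q_2\sum_{j=1}^{n}(3q_2)^{j-1}\le(3q_2)^{n}$, while the second coordinate of $w_k$ gives $\tfrac14(3q_2)^{n+k^2-1}\le\|w_k\|\le q_2(3q_2)^{n+k^2-1}$. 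Because $n\sim\log_3|k|$, the quantity $(3q_2)^{n}$ is bounded by a fixed power of $|k|$, so the head $v_k$ is negligible against the tail $w_k$, and one obtains, for all $|k|\ge 2$,
\[
\tfrac18(3q_2)^{k^2-1}\ \le\ \|\lambda_k\|\ \le\ (3q_2)^{k^2}P(|k|)
\]
for some fixed polynomial $P$; in particular $\|\lambda_k\|\to\infty$ as $|k|\to\infty$.

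Next I would run the lacunarity argument. Observe first that $k\in\Gamma_t\iff -k\in\Gamma_t$ (the digits of $-k$ vanish exactly where those of $k$ do), so $\{k>0:k\notin\Gamma_t\}$ and $\{k<0:k\notin\Gamma_t\}$ are simultaneously finite or infinite; if finite, $\Lambda_t'$ is finite and $\dim_{Be}\Lambda_t'=0$ trivially, so assume both are infinite. Fix $b=2$. If $k<k'$ lie both outside $\Gamma_t$ and have the same sign, then $(k')^2-k^2\ge 2|k|+1$, hence by the two-sided bound above
\[
\frac{\|\lambda_{k'}\|}{\|\lambda_k\|}\ \ge\ \frac{(3q_2)^{(k')^2-k^2-1}}{8\,P(|k|)}\ \ge\ \frac{(3q_2)^{2|k|}}{8\,P(|k|)},
\]
which tends to $\infty$ as $|k|\to\infty$; choose $K\ge 2$ so that this ratio is $\ge 2$ and $\|\lambda_k\|\ge 2$ whenever $|k|\ge K$. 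Enumerating $\{\lambda_k:k\ge K,\ k\notin\Gamma_t\}$ by increasing $k$ as $a_1,a_2,\dots$ (which, by the last display, is the same as increasing norm), $\{\lambda_k:k\le -K,\ k\notin\Gamma_t\}$ as $a_{-1},a_{-2},\dots$, and setting $a_0=\textbf{0}$, the set $\{a_n\}_{n\in\mathbb{Z}}$ is $2$-lacunary, so $\dim_{Be}\{a_n\}_{n\in\mathbb{Z}}=0$ by Lemma~\ref{lac}. Since $\Lambda_t'\subseteq\{a_n\}_{n\in\mathbb{Z}}\cup G$ with $G:=\{\lambda_k:0<|k|<K,\ k\notin\Gamma_t\}$ finite, monotonicity and finite stability of $\dim_{Be}$ (Lemma~\ref{inc}) give $\dim_{Be}\Lambda_t'\le\max(0,0)=0$, and the reverse inequality is trivial.

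The only real work lies in the middle step: one must make the comparison ``head $v_k$ is polynomial in $|k|$ while the tail $w_k$ has size roughly $(3q_2)^{k^2}$'' precise and uniform in $k$, and confirm that on the lacunary tail the ordering by $|k|$ coincides with the ordering by norm. Once these routine estimates are in place, everything else is an immediate application of Lemmas~\ref{lac} and~\ref{inc}.
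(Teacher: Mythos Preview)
Your proof is correct and follows the same strategy as the paper: exploit $m_k=k^2$ to force super-exponential growth of $\|\lambda_k\|$, deduce lacunarity, and apply Lemma~\ref{lac} together with Lemma~\ref{inc}. The paper's execution differs only in that it embeds $\Lambda_t'$ into the full set $\{\lambda_k\}_{k\in\mathbb{Z}}$ with \emph{all} $m_k=k^2$ and uses coordinate-wise ratio estimates to obtain a uniform lacunarity constant $b=(3q_1)^2/4>1$ valid from $k=1$ onward, thereby avoiding your finite-discarding step; one small slip in your write-up is that the inequality $(k')^2-k^2\ge 2|k|+1$ should be phrased as $|k|<|k'|$ rather than $k<k'$ when both indices are negative.
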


\begin{proof}
Choose $m_k=k^2$ for all $k\in\mathbb{Z}$. Consider the set $\Lambda(\{|k|\})=\{\lambda_k\}_{k=-\infty}^{\infty}$. 	
Recall that for $k\neq0$, there exists $n:=n(k)$ such that $k=i_1+i_23+\cdots+i_n3^{n-1}$ with $i_j\in\Theta_3$ and $i_n\neq0$, and
\begin{equation*}
\begin{split}\lambda_k&=
\sum_{j=1}^{n(k)} A^{j-1}i_j\begin{pmatrix}
		q_1 \\
		-q_2 \\
	\end{pmatrix}+A^{n(k)+k^2-1}\begin{pmatrix}
		\frac{q_1}{4} \\
		\frac{-q_2}{4} \\
	\end{pmatrix}\\
&=\begin{pmatrix}
		 \sum_{j=1}^{n(k)}(3q_1)^{j-1}i_jq_1+(3q_1)^{n(k)+k^2-1}\frac{q_1}{4}\\
		 \sum_{j=1}^{n(k)}(3q_2)^{j-1}i_j(-q_2)+(3q_2)^{n(k)+k^2-1}\frac{-q_2}{4}\\
	\end{pmatrix}.
\end{split}
\end{equation*}
Without loss of generality, we only need to consider the case that $k>0$. Write $\lambda_k=\begin{pmatrix}
		\lambda_k^{(1)} \\
		\lambda_k^{(2)} \\
	\end{pmatrix}$. Then, $|\lambda_k^{(1)}|\geq \frac{1}{2}(3q_1)^{n(k)+k^2-1}\frac{q_1}{4}$ and $|\lambda_k^{(2)}|\geq \frac{1}{2}(3q_2)^{n(k)+k^2-1}\frac{q_2}{4}$.
Moreover, $|\lambda_k^{(1)}|\leq 2(3q_1)^{n(k)+k^2-1}\frac{q_1}{4}$ and $|\lambda_k^{(2)}|\leq 2(3q_2)^{n(k)+k^2-1}\frac{q_2}{4}$. Hence,
\[\frac{|\lambda_{k+1}^{(1)}|}{|\lambda_{k}^{(1)}|}\geq \frac{\frac{1}{2}(3q_1)^{n(k+1)+(k+1)^2-1}\frac{q_1}{4}}{2(3q_1)^{n(k)+k^2-1}\frac{q_1}{4}}\geq\frac{(3q_1)^{2k}}{4},\]
where the last inequality holds because $\{n(|k|)\}$ is not decreasing. Similarly,
\[\frac{|\lambda_{k+1}^{(2)}|}{|\lambda_{k}^{(2)}|}\geq \frac{\frac{1}{2}(3q_2)^{n(k+1)+(k+1)^2-1}\frac{q_2}{4}}{2(3q_2)^{n(k)+k^2-1}\frac{q_2}{4}}\geq\frac{(3q_2)^{2k}}{4}.\]
By the fact that for any four positive numbers $x_1,x_2,y_1,y_2$, if $\frac{x_1}{y_1}\geq a>1$ and $\frac{x_2}{y_2}\geq b>1$, then $\frac{\sqrt{x_1^2+x_2^2}}{\sqrt{y_1^2+y_2^2}}\geq \min\{a,b\}$, we have
\[\frac{|\lambda_{k+1}|}{|\lambda_{k}|}\geq\frac{(3q_1)^{2k}}{4}\geq\frac{(3q_1)^{2}}{4}>1.\]
It follows from Lemma \ref{lac} that $\dim_{Be}(\Lambda(\{|k|\}))=0$. Observe that $\Lambda_t'\subseteq \Lambda(\{|k|\})$. Therefore,  $\dim_{Be}\Lambda_t'=0$ due to Lemma \ref{inc}.
\end{proof}

 Finally, we will prove the second assertion of Theorem \ref{thexu1}. For any $t\in\left[0,\frac{\log3}{\log 3q_2}\right]$.  For any $n\notin\Gamma_t$, we let $m_n=n^2$ or $n^2+1$. Then it is easy to see that $m_n<m_{n+1},n\in\N$. By choosing $m_n,n\in \Gamma_t^c$ randomly from the above two choices (note that the sets $\Gamma_t,\Gamma_t^c$ are all countable), we obtain that
 the level set
\[
L_t:=\{\Lambda:  \text{$\Lambda$ is a spectrum of $\mu_{A,\mathcal{D}}$ and $\dim \Lambda =t$} \}
\]
has the cardinality of the continuum.
The proof of Theorem \ref{thexu1} is completed.


\subsection*{Acknowledgements}
The project was supported by the National Natural Science Foundations of China (12171107, 12271534, 11971109), Guangdong NSF (2022A1515011844) and the Foundation of Guangzhou University (202201020207, RQ2020070).

\end{document}